\theoremstyle{plain}
\newtheorem{theorem}{Theorem}[section]
\newtheorem{corollary}[theorem]{Corollary}
\newtheorem{lemma}[theorem]{Lemma}
\newtheorem{proposition}[theorem]{Proposition}
\newtheorem{remark}[theorem]{Remark}
\numberwithin{equation}{section}
\newtheorem*{hyp*}{Hypothesis} 
\def\d{\displaystyle}
\begin{document}
\title[Structural formulas for matrix-valued orthogonal polynomials]{Structural formulas for matrix-valued orthogonal polynomials related to $2\times 2$  hypergeometric operators}
\author[C. Calder\'on]{C. Calder\'on}
\author[M. M. Castro]{M. M. Castro}
\email[C. Calder\'on]{celeste.calderon@fce.uncu.edu.ar}
\email[M.M. Castro] {mirta@us.es }
\thanks{The research of the first author was partially supported by CONICET and SECTYP-UNCUYO Argentina grant 06\textbackslash M125. The research of the second author was partially supported by PGC2018-096504-B-C31 (FEDER(EU)/Ministerio de Ciencia e Innovaci\'on-Agencia Estatal de Investigaci\'on), FQM-262 and Feder-US-1254600 (FEDET(EU)/Junta de Anadaluc\'ia). }
\address[C. Calder\'on]{Facultad de Ciencias Exactas y Naturales,
Universidad Nacional de Cuyo, 5500 Mendoza, Argentina}
\address[M. M. Castro]{Departamento de Matem\'atica Aplicada II and IMUS, Universidad de Sevilla, Escuela Polit\'ecnica Superior, calle Virgen de \'Africa 7, 41011, Sevilla, Spain}
\subjclass[2010]{ 42C05; 47S10; 33C45}
\keywords{Matrix-valued orthogonal polynomials; Matrix-valued differential operators; Rodrigues formula}

\begin{abstract}
	We give some structural formulas for the family of matrix-valued orthogonal polyno\-mials of size $2\times 2$ introduced  by C. Calder\'on et al. in an earlier work, which are common eigenfunctions of a differential operator of hypergeometric type. Specifically, we give a Rodrigues formula that allows us to write this family of polynomials explicitly in terms of the classical Jacobi polynomials, and write, for the sequence of orthonormal polynomials, the three-term recurrence relation and the Christoffel-Darboux identity.
	We obtain  a Pearson equation, which enables us to prove that the sequence of  derivatives  of the orthogonal polynomials is also orthogonal, and to compute a  Rodrigues formula for these polynomials  as well as a matrix-valued differential
	operator having these polynomials as eigenfunctions. 
	We also describe the second-order differential operators of the algebra associated with the weight matrix.
\end{abstract}


\maketitle

\section{Introduction}

In the last few years,  the search for examples of matrix-valued orthogonal polynomials that are common eigenfunctions of a second order differential operator, that is to say, satisfying a {\it bispectral property} in the sense of \cite{DG86}, has received a lot of attention after the seminal work of  A. Dur\'an in \cite{D97}. 

The theory of matrix-valued orthogonal polynomials was started by Krein in 1949 \cite{K49}, \cite{K71} (see also \cite{Atk} and \cite{Be}), in connection with spectral analysis and moment problems. Nevertheless, the first examples of orthogonal matrix polynomials satisfying this extra property  and non reducible to scalar case, appeared more recently in \cite{GPT01,GPT02a, GPT03, G03} and \cite{DG04}. The collection of examples has been growing lately  (see for instance \cite{D2,D3,DG5,DdI1,GdI,PT06,PR08,BCD,KPR12,KPR13, PZ16, KRR17,CGPSZ19}). Moreover, the problem  of giving a general classification of these families of matrix-valued orthogonal polynomials as  solutions of the so called {\it Matrix Bochner Problem} has been also recently addressed in   \cite{CY18} and in \cite{C19} for the special case of  $2\times 2$ hypergeometric matrix differential operators. 

As the case of classical orthogonal polynomials, the families of matrix-valued orthogonal polynomials satisfy many formal properties
such as  structural formulas (see for instance \cite{DG3, DL,D10,BCD,KRR17}), which have been very useful to compute explicitly the orthogonal polynomials related with several of these families. Having these explicit formulas is essential when one is looking for applications of these matrix-valued bispectral polynomials, such as in  the problem of {\it time and band limiting}  over a non-commutative ring and matrix-valued commuting operators,
see \cite{GPZ15,CG15,GPZ17,CG17,CGPZ17,GPZ18}.

Recently, in  \cite{CGPSZ19},  a new family of matrix-valued orthogonal polynomials of size $2\times 2$  was introduced, which are common eigenfunctions of a differential operator of {\it hypergeometric type} (in the sense defined by Juan A. Tirao in \cite{T03}):

\begin{equation*} 
D=
\frac{d^{2}}{dt^{2}}t(1-t)+ \frac{d}{dt}\left( C-tU\right)-V, \quad \text{ with } U,V,C\in
\mathbb{C}
^{2\times 2}.
\end{equation*}%

In particular, the polynomials $(P^{\left( \alpha ,\beta ,v\right) }_n)_{n\geq 0}$ introduced in 
\cite{CGPSZ19}, orthogonal with respect to the weight matrix  $W^{(\alpha,\beta,v)}$ given in (\ref{W}) and (\ref{Wtilde}), are common eigenfunctions of an hypergeometric operator with matrix eigenvalues $\Lambda_n$, which are diagonal matrices with no repetition in their entries. This fact could be especially useful if one intends to use this family of polynomials in the context of {\it time and band limiting}, where the commutativity of the matrix-valued eigenvalues $(\Lambda_n)_n$ could play an important role. 


In this paper we give some structural formulas for the family of matrix-valued orthogonal polynomials introduced in \cite{CGPSZ19}. In particular, in Section \ref{secRodr} we give a Rodrigues formula (see Theorem \ref{Rodrigues}), which allows us to write this family of polynomials explicitly in terms of the classical Jacobi polynomials (see Corollary \ref{RodriguesJacobi}). 

In Section \ref{orto}, this Rodrigues formula allows us to compute the norms of the sequence of monic orthogonal polynomials and therefore, we can find the coefficients of the three-term recurrence relation and the Christoffel-Darboux identity for the sequence of orthonormal polynomials.

In Section \ref{las_derivadas}, we obtain  a Pearson equation (see Proposition \ref{Pearson}), which allows us to prove that the  sequence of  derivatives of $k$-th order, $k\geq 1$, of the orthogonal polynomials is also orthogonal with respect to the weight matrix given explicitly in Proposition \ref{WKDK}.

In Section \ref{shifff}, following the ideas in \cite[Section 5.1]{KRR17}, we use  the Pearson equation  to give explicit lowering and
rising operators for the sequence of derivatives. Thus, we deduce a Rodrigues formula for these polynomials  and find a  matrix-valued differential
operator that has  these matrix-valued polynomials as common eigenfunctions. 

Finally, in Section \ref{algebra}, we describe the algebra of second order  differential operators associated with the weight matrix $W^{(\alpha,\beta,v)}$ given in (\ref{W}) and (\ref{Wtilde}). Indeed, for a given weight matrix $W$, the analysis of the algebra $D(W)$ of all differential operators that have a sequence of matrix-valued orthogonal polynomials with respect to $W$ as eigenfunctions has received much attention in the literature in the last fifteen years \cite{CG06,GT07,T11,PZ16,Z16,C18,CY18}. While for classical orthogonal polynomials the structure of this algebra is very well known (see \cite{LM}), in the matrix setting, where this algebra is non commutative, the situation is highly non trivial.

\section{Preliminaries}

In this section we give some background on matrix-valued orthogonal
polynomials (see \cite{DL1} for further details). A weight matrix $W$ is a complex $N\times N$ matrix-valued
integrable function on the interval $(a,b)$, such that $W$ is positive
definite almost everywhere and with finite moments of all orders, i.e., $ \int_a^b t^{n}dW(t)\in \mathbb{C}^{N \times N}, \ n \in \mathbb{N}$. The weight
matrix $W$ induces a Hermitian sesquilinear form,%
\begin{equation*}
\left\langle P,Q\right\rangle _{W}=\int_{a}^{b}P(t)W\left( t\right)
Q^{\ast }\left( t\right) dt,
\end{equation*}%
for any pair of $N\times N$ matrix-valued functions $P(t)$ and $Q(t)$, where 
$Q^{\ast }(t)$ denotes the conjugate transpose of $Q(t)$.

A sequence $(P_n)_{n\geq 0}$ of orthogonal  polynomials with respect to a weight matrix $W$ is a sequence of matrix-valued polynomials satisfying that $P_n(t)$, $n\ge 0$, is a matrix polynomial of degree $n$ with non-singular leading coefficient, and 
$\left\langle P_{n},P_{m}\right\rangle _{W}=\Delta_n\delta _{n,m}$, where
$\Delta _n$, $n\ge 0$, is a positive definite matrix.
When $\Delta_n=I$, here  $I$ denotes the identity matrix, we say that the polynomials $(P_n)_{n\geq 0}$ are orthonormal. In particular, when the leading coefficient of $P_n(t)$, $n\ge 0$, is the identity matrix, we say that the polynomials $(P_n)_{n \geq 0}$ are monic. 
%

Given a weight matrix $W$, there exists a unique sequence of monic orthogonal
polynomials $\left( P_{n}\right) _{n\geq 0}$ in $%
\mathbb{C}
^{N\times N}[t]$, any other sequence of orthogonal polynomials $\left(
Q_{n}\right) _{n\geq 0}$ can be written as 
$Q_{n}(t)=K_{n}P_{n}(t)$ for some
non-singular matrix $K_{n}.$

Any sequence of monic orthogonal matrix-valued polynomials $\left(
P_{n}\right) _{n\geq 0}$ satisfies a three-term recurrence relation 
\begin{equation*}
tP_{n}(t)=P_{n+1}(t)+B_{n}P_{n}(t)+A_{n}P_{n-1}(t),\quad \text{ for }n\in \mathbb{N}_{0},
\end{equation*}%
where $P_{-1}(t)=0$, $P_{0}(t)=I$. The $N \times N $ matrix  coefficients $A_{n}$ and $B_{n}$ enjoy certain
properties: in particular, $A_{n}$ is non-singular for any $n$. 

Two weights $W$ and $\widetilde{W}$ are said to be \emph{equivalent} if there
exists a non-singular matrix $M$, which does not depend on $t$, such that 
\begin{equation*}
\widetilde{W}(t)=MW(t)M^{\ast },\quad \text{ for all }t\in (a,b).
\end{equation*}%
A weight matrix $W$ \textit{reduces} to a smaller size if there exists a
non-singular matrix $M$ such that 
\begin{equation*}%
MW(t)M^{\ast }=%
\begin{pmatrix}
W_{1}(t) & 0 \\ 
0 & W_{2}(t)%
\end{pmatrix}%
,\quad \text{ for all }t\in (a,b),
\end{equation*}%
where $W_{1}$ and $W_{2}$ are weights of smaller size. A weight matrix $W$
is said to be \textit{irreducible} if it does not reduce to a smaller size (see \cite{DG04}, \cite{TZ16}).

Let $D$ be a right-hand side ordinary differential operator with matrix
valued polynomial coefficients, 
\begin{equation*}
D=\sum_{i=0}^{s}\partial ^{i}F_{i}\left( t\right),\text{ \ \ \ \ \ \ \ }%
\partial ^{i}=\frac{d^{i}}{dt^{i}}.
\end{equation*}%
The operator $D$ acts on a polynomial function $P\left( t\right) $ as $%
PD=\sum_{i=0}^{s}\partial ^{i}PF_{i}\left( t\right).$

We say that the differential operator $D$ is symmetric with respect to $W$ if
\begin{equation}\label{symcond}
\left\langle PD,Q\right\rangle _{W}=\left\langle P,QD\right\rangle _{W},\
\textrm{for all}\ P,Q\in 
\mathbb{C}
^{N\times N}[t].
\end{equation}

The differential operator $D=\displaystyle \frac{d^{2}}{dt^{2}}F_{2}(t)+\frac{d}{dt}%
F_{1}(t)+F_{0}$ is symmetric with respect to $W$ if and only if ( \cite[Theorem 3.1]{DG04})
\begin{eqnarray}
F_{2}W &=&WF_{2}^{\ast } , \label{EDS1} \\
2\left( F_{2}W\right) ^{\prime } &=&F_{1}W+WF_{1}^{\ast },  \notag\\
\left( F_{2}W\right) ^{\prime \prime }-\left( F_{1}W\right) ^{\prime }
&=&WF_{0}^{\ast }-F_{0}W , \notag
\end{eqnarray}%
and%
\begin{equation}
\lim_{t\rightarrow a,b} F_{2}\left( t\right)W\left( t\right) =0\text{ \ \ \
and \ \ \ }\lim_{t\rightarrow a,b}\left(  F_{1}\left(
t\right)W\left( t\right) -W\left( t\right)F_{1}^{\ast }\left( t\right) \right) =0.
\label{CBorde}
\end{equation}

We will need the following result to find the Rodrigues' formula for the sequence of 
orthogonal polynomials with respect to a weight matrix $W$.

\begin{theorem}
\label{D10R} (\cite[Lemma 1.1]{D10}) Let $F_{2}$, $F_{1}$
and $F_{0}$ be matrix polynomials of degrees not larger than $2$, $1$%
, and $0$, respectively. Let $W$, $R_{n}$ be $N\times N$ matrix functions
twice and $n$ times differentiable, respectively, in an open set of the real
line $\Omega $. Assume that $W(t)$ is non-singular for $t\in $ $\Omega $ and
that satisfies the identity and the differential equations  in (\ref{EDS1}). Define the functions $P_{n}$, $n\geq 1$, by%
\begin{equation*}
P_{n}=R_{n}^{(n)}W^{-1}.
\end{equation*}%
If for a matrix $\Lambda _{n}$, the function $R_{n}$ satisfies%
\begin{equation*}
\left( R_{n}F_{2}^{\ast }\right) ^{\prime \prime }-\left( R_{n}[F_{1}^{\ast
}+n\left( F_{2}^{\ast }\right) ^{\prime }]\right) ^{\prime
}+R_{n}[F_{0}^{\ast }+n\left( F_{1}^{\ast }\right) ^{\prime }+%
\begin{pmatrix}
n \\ 
2%
\end{pmatrix}%
\left( F_{2}^{\ast }\right) ^{^{\prime \prime }}]=\Delta _{n}R_{n},
\end{equation*}%
then the function $P_{n}$ satisfies%
\begin{equation*}
 P_{n}^{^{\prime \prime }}\left( t\right)F_{2}\left( t\right) + P_{n}^{^{\prime }}\left( t\right)F_{1}\left(
 t\right) + P_{n}\left(
t\right)F_{0}\left(
t\right) = \Delta _{n}P_{n}\left(
t\right).
\end{equation*}
\end{theorem}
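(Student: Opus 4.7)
The plan is to establish the following operator-conjugation identity: for every sufficiently smooth $N\times N$ matrix function $R$,
$$(R^{(n)} W^{-1})\, D \;=\; \bigl[\mathcal{L}_n(R)\bigr]^{(n)}\, W^{-1}, \qquad (\star)$$
where
$$\mathcal{L}_n(R) := (R F_2^*)'' - \bigl(R\,[F_1^* + n(F_2^*)']\bigr)' + R\bigl[F_0^* + n(F_1^*)' + \tbinom{n}{2}(F_2^*)''\bigr].$$
Once $(\star)$ is available, the theorem follows at once: taking $R=R_n$ and invoking the hypothesis $\mathcal{L}_n(R_n)=\Lambda_n R_n$ (with $\Lambda_n$ a \emph{constant} matrix, so its derivatives vanish) yields
$$P_n D \;=\; (R_n^{(n)} W^{-1})\,D \;=\; [\Lambda_n R_n]^{(n)}\, W^{-1} \;=\; \Lambda_n\, R_n^{(n)} W^{-1} \;=\; \Lambda_n P_n.$$

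To prove $(\star)$, I would multiply both sides on the right by $W$ and reduce the problem to the identity
$$\bigl[(R^{(n)} W^{-1})'' F_2 + (R^{(n)} W^{-1})' F_1 + R^{(n)} W^{-1} F_0\bigr]\,W \;=\; \bigl[\mathcal{L}_n(R)\bigr]^{(n)}.$$
The three symmetry conditions in (\ref{EDS1}) are precisely what is needed to eliminate the products $F_j W$ in favor of $W F_j^{*}$ and their derivatives: directly $F_2 W = W F_2^{*}$, and then $F_1 W = 2(W F_2^{*})' - W F_1^{*}$ and $F_0 W = W F_0^{*} + (F_1 W)' - (W F_2^{*})''$. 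Expanding $(R^{(n)} W^{-1})'$ and $(R^{(n)} W^{-1})''$ via the Leibniz rule together with $(W^{-1})' = -W^{-1} W' W^{-1}$, every trailing $W^{-1}$ in the left-hand side is cancelled by the explicit $W$-factor produced from those substitutions, so after regrouping one is left with an expression built only from derivatives of $R$ paired with $F_j^{*}$ and its derivatives.

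The final step is to recognize the resulting expression as the $n$-th derivative of $\mathcal{L}_n(R)$. Applying the generalized Leibniz rule $(AB)^{(n)}=\sum_{k=0}^{n}\binom{n}{k} A^{(k)} B^{(n-k)}$ to each summand of $\mathcal{L}_n(R)$, and using that $F_2^{*}, F_1^{*}, F_0^{*}$ have degrees at most $2,1,0$ (so $(F_2^{*})'''$ and $(F_1^{*})''$ vanish identically), only finitely many terms survive. The combinatorial factors that emerge, namely $\binom{n}{n-1}=n$ and $\binom{n}{n-2}=\binom{n}{2}$, are exactly the coefficients already written into the definition of $\mathcal{L}_n$; this is precisely why $\mathcal{L}_n$ is defined with those specific coefficients. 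Matching both sides term by term closes the argument.

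The main obstacle is the noncommutative symbolic bookkeeping of the middle step: keeping the factors of $R^{(k)}$, $W$, $W^{-1}$, $W'$ and the $F_j$ in the correct order, tracking the signs introduced by $(W^{-1})'=-W^{-1}W'W^{-1}$ and by the alternating signs in the substitutions of $F_j W$, and verifying that all the $W$-factors indeed cancel as the formalism requires. The rest of the argument, Leibniz expansion and degree-counting, is then essentially combinatorial and proceeds cleanly.
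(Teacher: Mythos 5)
Your proposal is correct: the operator identity $(\star)$ does hold for arbitrary smooth $R$ under the symmetry relations (\ref{EDS1}), since after right-multiplying by $W$ both sides reduce to $(R^{(n)}F_2^{*})''-(R^{(n)}F_1^{*})'+R^{(n)}F_0^{*}$ (the left side via the substitutions $F_2W=WF_2^{*}$, $F_1W=2(WF_2^{*})'-WF_1^{*}$, $F_0W=WF_0^{*}+(WF_2^{*})''-(WF_1^{*})'$ and the observation that $P W=R^{(n)}$, the right side via Leibniz and the degree bounds, with the coefficients $n$ and $\binom{n}{2}$ collapsing exactly as you indicate). This is essentially the standard proof of Dur\'an's Lemma 1.1, which the paper only cites without reproducing, so your argument matches the intended one.
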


\subsection{The family of matrix-valued orthogonal polynomials}

In \cite{CGPSZ19} the authors introduce a Jacobi type weight matrix $%
W^{\left( \alpha ,\beta ,v\right) }\left( t\right) $ and a differential
operator $D^{\left( \alpha ,\beta ,v\right) }$ such that $D^{\left( \alpha
,\beta ,v\right) }$ is symmetric with respect to the weight matrix $%
W^{\left( \alpha ,\beta ,v\right) }\left( t\right).\ $

Let $\alpha$, $\beta$, $v\in\mathbb{R}$, $\alpha ,\beta >-1$ and $|\alpha -\beta |<|v|<\alpha +\beta +2$. We consider the weight matrix function 
\begin{equation}
W^{\left( \alpha ,\beta ,v\right) }(t)=t^{\alpha }\left( 1-t\right) ^{\beta
}\,\widetilde{W}^{\left( \alpha ,\beta ,v\right) }\left( t\right), \text{ \ \ for }t\in (0,1),  \label{W}
\end{equation}%
with 
\begin{equation}\label{Wtilde}
\small{\widetilde{W}^{\left( \alpha ,\beta ,v\right) }\left( t\right)} =%
\begin{pmatrix}
\dfrac{v(\kappa_{v,\beta}+2)}{\kappa_{v,-\beta}}t^{2}-\left( \kappa_{v,\beta}+2
\right) t+(\alpha +1) & (\alpha +\beta +2)t-(\alpha +1) \\ 
(\alpha +\beta +2)t-(\alpha +1) & -\dfrac{v(\kappa_{-v,\beta}+2)}{\kappa_{-v,-\beta} }t^{2}-\left( \kappa_{-v,\beta}+2 \right) t+(\alpha +1)%
\end{pmatrix},%
\end{equation}%
where, for the sake of clearness in the rest of the paper, we use the notation:
\begin{equation}
\kappa_{\pm v,\pm\beta}=\alpha \pm v \pm \beta \  .
\end{equation}
$W^{\left( \alpha ,\beta ,v\right) }$ is an irreducible weight matrix and
the  hypergeometric type differential operator given by

\begin{equation}\label{Dalpha_beta}
D^{\left( \alpha ,\beta ,v\right) }= \d \frac{d^{2}}{dt^{2}}F_{2}\left( t\right)%
+\frac{d}{dt}F_{1}\left( t\right)+ F_{0}\left( t\right),\end{equation}

 where

\begin{equation}\label{def_F(t)}
F_{2}\left( t\right) =t(1-t),\  F_{1}\left( t\right) =C^{\ast}-tU\  \textrm{and}\ \ F_{0}\left(
t\right) =-V,
\end{equation}

and
\begin{equation}\label{Coef_CUV}
C=%
\begin{pmatrix}
\alpha +1-\dfrac{\kappa_{-v,-\beta}}{v} & \dfrac{\kappa_{v,-\beta}}{v} \\ 
-\dfrac{\kappa_{-v,-\beta}}{v} & \alpha +1+\dfrac{\kappa_{v,-\beta} }{v}%
\end{pmatrix}%
,\ U=\left( \alpha +\beta +4\right)I \ \text{ and }V=%
\begin{pmatrix}
v & 0 \\ 
0 & 0%
\end{pmatrix},
\end{equation}  
is symmetric with respect to the weight matrix $W^{\left( \alpha ,\beta
,v\right) }$.

In the same paper, the authors also give the corresponding monic orthogonal
polynomials in terms of the hypergeometric function $_{2}H_{1}\left( {C,U,V};t\right)$ defined by J. A. Tirao in \cite{T03}  and their three-term recurrence relation.%

\begin{proposition} \cite[Theorem 4.3]{CGPSZ19}
Let $\left( P_{n}^{\left( \alpha ,\beta ,v\right) }\right) _{n\geq 0}$ be the
sequence of matrix-valued monic orthogonal  polynomials associated with the
weight function $W^{\left( \alpha ,\beta ,v\right) }(t)$. Then, $P_{n}^{\left(
\alpha ,\beta ,v\right) }$ is an eigenfunction of the differential operator $%
D^{\left( \alpha ,\beta ,v\right) }$ with diagonal eigenvalue 
\begin{equation}
\Lambda _{n}=%
\begin{pmatrix}
\lambda _{n} & 0 \\ 
0 & \mu _{n}%
\end{pmatrix}%
,\text{ \ \ \ \ \ \ }%
\begin{array}{c}
\lambda _{n}=-n(n-1)-n\left( \alpha +\beta +4\right) -v, \\ 
\mu _{n}=-n(n-1)-n\left( \alpha +\beta +4\right). %
\end{array}
\label{LN}
\end{equation}%

Moreover (\cite[Section 4.2] {CGPSZ19}), the matrix-valued monic orthogonal polynomials $\left( P_{n}^{\left( \alpha ,\beta ,v\right) }\right) _{n\geq 0}$ are given by 

\begin{align}
 \left(P_{n}^{\left( \alpha ,\beta ,v\right)} \left(t \right)\right)^{\ast} =& \,_{2}H_{1}\left( {C,U,V+\lambda _{n}I}%
;t\right)n! \left[ C,U,V+\lambda _{n}I\right] _{n}^{-1}%
\begin{pmatrix}
1 & 0 \\ 
0 & 0%
\end{pmatrix}
\label{h21po} \\
& \quad +\,_{2}H_{1}\left( {C,U,V+\mu _{n}I};t\right)n!\left[ C,U,V+\mu _{n}I%
\right] _{n}^{-1}%
\begin{pmatrix}
0 & 0 \\ 
0 & 1%
\end{pmatrix}%
,  \notag
\end{align}

where
\begin{equation*}
_{2}H_{1}\left( {C,U,V};t\right) =\sum\limits_{k\geq 0}\left[ C,U,V\right]
_{k}\frac{t^{k}}{k!},
\end{equation*}
and $\left[ C,U,V\right] _{k}$ is defined inductively as $\left[ C,U,V%
\right] _{0}=I$ and \newline
\begin{equation}\label{corhyp}
\left[ C,U,V\right] _{k+1}=\left( C+kI\right) ^{-1}\left( k\left( k-1\right)
I+kU+V\right) \left[ C,U,V\right] _{k}.
\end{equation}
\end{proposition}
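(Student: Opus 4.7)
The plan is to split the statement into two assertions---that $P_n^{(\alpha,\beta,v)}$ is an eigenfunction of $D^{(\alpha,\beta,v)}$ with eigenvalue $\Lambda_n$, and that it admits the explicit representation (\ref{h21po})---and attack them in that order.

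For the eigenvalue assertion, I would use the standard symmetry argument for matrix-valued orthogonal polynomials, exploiting that $D^{(\alpha,\beta,v)}$ is symmetric with respect to $W^{(\alpha,\beta,v)}$. Because $F_2$, $F_1$, $F_0$ have degrees $2$, $1$, $0$, the operator preserves degree, so $P_n D=\sum_{k=0}^n A_k P_k$; the symmetry condition (\ref{symcond}) together with $\deg(P_kD)\le k$ gives $\langle P_nD,P_k\rangle_W=\langle P_n,P_kD\rangle_W=0$ for $k<n$, whence $P_nD=\Lambda_nP_n$. Reading off the coefficient of $t^n$ in $P_n''F_2+P_n'F_1+P_nF_0$ for monic $P_n$ yields $\Lambda_n=-n(n-1)I-nU-V$, and since $U=(\alpha+\beta+4)I$ is scalar and $V=\mathrm{diag}(v,0)$, this is precisely $\mathrm{diag}(\lambda_n,\mu_n)$.

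For the explicit formula I would pass to $Q_n:=P_n^\ast$. Taking adjoints of $P_n D=\Lambda_n P_n$ and noting $F_2^\ast=F_2$, $F_1^\ast=C-tU$, and $V^\ast=V$, the eigenequation becomes
\begin{equation*}
t(1-t)Q_n''+(C-tU)Q_n'=V Q_n+Q_n\Lambda_n.
\end{equation*}
Crucially, this decouples columnwise: setting $q_n^{(i)}:=Q_ne_i$ for $i=1,2$, one obtains
\begin{equation*}
t(1-t)\bigl(q_n^{(i)}\bigr)''+(C-tU)\bigl(q_n^{(i)}\bigr)'=\bigl(V+\Lambda_{ii}I\bigr)q_n^{(i)},
\end{equation*}
which is the Tirao hypergeometric equation with parameter $\widetilde V=V+\lambda_nI$ (resp.\ $V+\mu_nI$). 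Tirao's theory identifies every solution analytic at $t=0$ as $\,_2H_1(C,U,\widetilde V;t)w$ for a constant vector $w$.

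It remains to fix $w$ so that $Q_n$ is a matrix polynomial of degree $n$ with leading coefficient $I$. Using (\ref{corhyp}) and the identity $\lambda_n+v=\mu_n=-n(n-1)-n(\alpha+\beta+4)$, a direct computation shows that the coefficient of $t^{n+1}$ in $\,_2H_1(C,U,V+\lambda_nI;t)w$ reduces to $(C+nI)^{-1}\mathrm{diag}(0,-v)[C,U,V+\lambda_nI]_nw$, which vanishes exactly when $[C,U,V+\lambda_nI]_nw$ is a multiple of $e_1$. Monicity of $P_n$ forces this multiple to equal $n!\,e_1$, giving $w=n!\,[C,U,V+\lambda_nI]_n^{-1}e_1$, the first summand in (\ref{h21po}); the analogous analysis with $\mu_n$ and $e_2$ supplies the second summand. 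The main obstacle is to verify that $[C,U,V+\lambda_nI]_n$ and $[C,U,V+\mu_nI]_n$ are invertible under the standing hypothesis $|\alpha-\beta|<|v|<\alpha+\beta+2$; via (\ref{corhyp}) this reduces to non-singularity of $C+kI$ for $0\le k<n$ together with a spectral condition of the form $v\neq(n-k)(k+n+\alpha+\beta+3)$, both of which should follow from a direct computation using (\ref{Coef_CUV}).
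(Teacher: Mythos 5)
Your proposal is correct, and it follows essentially the route the paper itself takes: the statement is quoted from \cite[Theorem 4.3 and Section 4.2]{CGPSZ19}, but the proof the paper supplies ``for completeness'' of the analogous result for the $k$-th derivatives (the proposition containing (\ref{QHk})) is exactly your argument for the second half --- diagonality of $\Lambda_n$ decouples the adjoint equation into two Tirao vector hypergeometric equations, the factor $-\Lambda_n+\lambda_nI=\mathrm{diag}(0,-v)$ in $[C,U,V+\lambda_nI]_{n+1}$ forces termination, and invertibility of $[C,U,V+\lambda_nI]_n$ reduces to the eigenvalues of $C$ avoiding $-\mathbb{N}_0$ together with $\lambda_n\neq\lambda_k,\mu_k$ for $k<n$ (your spectral condition, up to a sign). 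Your symmetry argument for the eigenvalue part and the computation $\Lambda_n=-n(n-1)I-nU-V$ are likewise standard and correct.
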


\begin{proposition}(\cite[Theorem 3.12] {CGPSZ19})\label{recurre_inicial}
The monic orthogonal polynomials $\left( P_{n}^{\left( \alpha ,\beta ,v\right)
}\right) _{n\geq 0}$ satisfy the three-term recurrence relation 
\begin{equation}
tP_{n}^{\left( \alpha ,\beta ,v\right) }(t)=P_{n+1}^{\left( \alpha ,\beta
,v\right) }(t)+B_{n}^{\left( \alpha
,\beta ,v\right) }P_{n}^{\left( \alpha ,\beta ,v\right) }(t)+A_{n}^{\left(
\alpha ,\beta ,v\right) }P_{n-1}^{\left( \alpha ,\beta ,v\right) } (t) \label{RR3}
\end{equation}%
where 
\begin{equation} \label{An} 
A_{n}^{\left( \alpha ,\beta ,v\right) } =a_n^{\left( \alpha ,\beta ,v\right) }\begin{pmatrix}
(4+2n+\kappa_{-v,\beta})(2n+\kappa_{v,\beta}) & 0 \\ 
0 & (4+2n+ \kappa_{v,\beta})(2n+\kappa_{-v,\beta} )%
\end{pmatrix},
\end{equation} with
\begin{equation}
\small{a_n^{\left( \alpha ,\beta ,v\right) }=
\frac{n(1+n+\alpha )(1+n+\beta
)(2+n+\alpha +\beta )}{(1+2n+\alpha +\beta )(2+2n+\alpha +\beta
)^{2}(3+2n+\alpha +\beta )(2+2n+\kappa_{-v,\beta} )(2+2n+\kappa_{v,\beta} )}, \ n \geq 0},%
\end{equation}
 and the entries of $B_n=B^{\left( \alpha
 	,\beta ,v\right) }_n$, $n\geq0$, are
 \begin{align}\label{losBn}
\left( B_{n}\right) _{11} &=-n\frac{(\alpha+n)v-\kappa_{-v,-\beta}}{
(\alpha+\beta+2n+2)v}+(n+1)\frac{(\alpha+n+1)v-\kappa_{-v,-\beta}}{
(\alpha+\beta+2n+4)v}, \\ \label{Bn}
\left( B_{n}\right) _{21} &={\frac { \kappa_{v,-\beta}  \left( \kappa_{-v,\beta}+2 \right) }{v \left(\kappa_{-v,\beta} +2
\,n+2 \right)  \left( \kappa_{-v,\beta} +2\,n+4 \right) }},\notag \\ 
\left( B_{n}\right) _{12} &={\frac { -\kappa_{-v,-\beta}  \left(\kappa_{v,\beta}+2 \right) }{v \left(\kappa_{v,\beta}+2
\,n+2 \right)  \left( \kappa_{v,\beta}+2\,n+4 \right) }}
, \notag \\
\left( B_{n}\right) _{22} &=-n\frac{(\alpha+n)v+\kappa_{v,-\beta}}{%
(\alpha+\beta+2n+2)v}+(n+1)\frac{(\alpha+n+1)v+\kappa_{v,-\beta}}{%
(\alpha+\beta+2n+4)v} \notag
.
\end{align}
\end{proposition}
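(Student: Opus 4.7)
The plan is to establish the three-term recurrence relation by direct computation, exploiting the explicit hypergeometric representation (\ref{h21po}) of the monic orthogonal polynomials.

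First, it is a general fact (recalled in the Preliminaries) that any sequence of monic matrix-valued orthogonal polynomials $(P_n)_{n\geq 0}$ satisfies a three-term recurrence relation of the form $tP_n(t)=P_{n+1}(t)+B_n P_n(t)+A_n P_{n-1}(t)$, so only the existence and non-singularity of the coefficients $A_n$, $B_n$ is automatic. Using the standard identities
\[
B_n=\langle tP_n,P_n\rangle_W\,\langle P_n,P_n\rangle_W^{-1},\qquad A_n=\langle P_n,P_n\rangle_W\,\langle P_{n-1},P_{n-1}\rangle_W^{-1},
\]
the problem reduces to computing the two inner products. In particular, $A_n$ is obtained from the squared norms $\|P_n\|_W^2:=\langle P_n,P_n\rangle_W$, and $B_n$ is obtained either from the first inner product above or, equivalently, by expanding $P_n(t)=t^nI+X_nt^{n-1}+O(t^{n-2})$ and matching coefficients of $t^n$ in the recurrence to get $B_n=X_n-X_{n+1}$.

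Second, to compute $X_n$, I would extract the subleading coefficient of $P_n^{(\alpha,\beta,v)}$ from (\ref{h21po}). Concretely, I would use the inductive formula (\ref{corhyp}) for the symbols $[C,U,V+\lambda_nI]_k$ and $[C,U,V+\mu_n I]_k$ and read off the coefficient of $t^{n-1}$ in the two hypergeometric pieces; since each eigenvalue $\lambda_n,\mu_n$ is selected by the diagonal projectors $\bigl(\begin{smallmatrix}1&0\\0&0\end{smallmatrix}\bigr)$ and $\bigl(\begin{smallmatrix}0&0\\0&1\end{smallmatrix}\bigr)$, this gives $X_n$ entrywise in closed form. The denominators $(2n+\kappa_{\pm v,\beta})$ and $(2n+2+\kappa_{\pm v,\beta})$ appearing in (\ref{losBn})--(\ref{Bn}) should emerge from the determinants of the matrices $(C+kI)$ that must be inverted in (\ref{corhyp}) at the two shifted eigenvalues.

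Third, for $A_n$, rather than compute the squared norms from the hypergeometric series directly, I would exploit the symmetry of the operator $D^{(\alpha,\beta,v)}$. From $P_nD=\Lambda_nP_n$, the symmetry condition (\ref{symcond}) produces relations between $\langle P_n,P_n\rangle_W$ and $\langle P_{n-1},P_{n-1}\rangle_W$ via integration by parts, using the Pearson-type identity $2(F_2W)'=F_1W+WF_1^*$ and the boundary conditions (\ref{CBorde}). This is the route successful for scalar Jacobi polynomials and it adapts naturally to the matrix setting; the ratio of consecutive norms gives $A_n$ without having to control both numerator and denominator separately. The scalar prefactor $a_n^{(\alpha,\beta,v)}$ with the full Jacobi-type denominator should emerge from this, while the diagonal matrix structure of $A_n$ in (\ref{An}) comes from the two scalar eigenvalues $\lambda_n,\mu_n$ giving rise to two independent norm ratios.

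The main obstacle is not any single conceptual step but the bookkeeping: matching the rational output of the hypergeometric/Pearson computations to the specific form stated, in which the shifted parameters $\kappa_{\pm v,\pm\beta}=\alpha\pm v\pm\beta$ organize into the quoted products. Careful use of the particular structure of $C$, $U$, $V$ in (\ref{Coef_CUV}) (the fact that $U$ is a scalar matrix and $V$ is rank one) should be what makes these simplifications tractable and is the step I would expect to dominate the work.
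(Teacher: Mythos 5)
First, note that the paper does not actually prove this proposition: it is quoted verbatim from \cite[Theorem 3.12]{CGPSZ19}. The closest internal analogue is the proof of the three-term recurrence for the derivative sequences in Section \ref{las_derivadas}, where the coefficients are obtained by matching the two subleading coefficients $\mathcal{P}_{n-k}^{n-k-1}$ and $\mathcal{P}_{n-k}^{n-k-2}$ extracted from the hypergeometric representation, via $B_n=\mathcal{P}_n^{n-1}-\mathcal{P}_{n+1}^{n-1}$ and $A_n=\mathcal{P}_n^{n-2}-\mathcal{P}_{n+1}^{n-1}-B_n\mathcal{P}_n^{n-1}$ (cf.\ (\ref{Pn-k-1})--(\ref{Pn-k-2})). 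Your treatment of $B_n$ is exactly this route and is sound: the denominators $(\kappa_{\pm v,\beta}+2n+2)$ do indeed come from inverting $C+kI$ in (\ref{corhyp}).

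The genuine gap is in your route to $A_n$. The identity $A_n=\Vert P_n\Vert^2\Vert P_{n-1}\Vert^{-2}$ is fine (it is (\ref{propA})), but the mechanism you propose for computing the norm ratio does not work as stated. The second-order operator $D^{(\alpha,\beta,v)}$ preserves degree, so its symmetry combined with $P_nD=\Lambda_nP_n$ only yields $\Lambda_n\Vert P_n\Vert^2=\Vert P_n\Vert^2\Lambda_n^{\ast}$ (useful for showing $\Vert P_n\Vert^2$, hence $A_n$, is diagonal, but silent on the ratio of consecutive norms). The scalar integration-by-parts trick you invoke relies on $(F_2w)'=F_1w$ and on $F_2w$ being the orthogonality weight of the derivatives; in the matrix setting the distributional relation $2(F_2W)'=F_1W+WF_1^{\ast}$ leaves a nonvanishing cross term $\tfrac12\int P_n'(F_1W-WF_1^{\ast})P_n^{\ast}$, and $t(1-t)W(t)$ is \emph{not} the weight $W^{(1)}=W\Phi^{(0)}$ with respect to which the derivatives are orthogonal. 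Making your idea rigorous requires the genuinely matrix-valued Pearson pair $(\Phi^{(k)},\Psi^{(k)})$ of Theorem \ref{Pearson} and the shift operators of Section \ref{shifff} --- a substantial separate ingredient, not a consequence of the symmetry equations (\ref{EDS1})--(\ref{CBorde}) for $D$. Two self-contained alternatives consistent with the paper: compute $\Vert P_n\Vert^2$ from the Rodrigues formula of Theorem \ref{Rodrigues} as a Beta integral (this is exactly how (\ref{norm}) is obtained, and then $A_n$ follows from (\ref{propA})), or simply extract the $t^{n-2}$ coefficient from (\ref{h21po}) as well and match coefficients, as the paper does for the derivative analogue. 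With either repair, and the (admittedly heavy) bookkeeping you acknowledge, the argument goes through.
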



 Using the symmetry condition (\ref{symcond}) and the three-term recurrence relation (\ref{RR3}) one can easily see that the coefficients $A_{n}^{\left( \alpha ,\beta ,v\right) }$
and $B_{n}^{\left( \alpha ,\beta ,v\right) }$ satisfy the identities:
\begin{eqnarray}
A_{n}^{\left( \alpha ,\beta ,v\right)}\left\Vert P_{n-1}^{\left( \alpha ,\beta ,v\right) }\left( t\right)
\right\Vert ^{2}  &=&\left\Vert
P_{n}^{\left( \alpha ,\beta ,v\right) }\left( t\right) \right\Vert ^{2},
\label{propA} \\
\left(B_{n}^{\left( \alpha ,\beta ,v\right) }\left\Vert P_{n}^{\left( \alpha ,\beta ,v\right) }\left( t\right)
\right\Vert ^{2}\right) ^{\ast }
&=&B_{n}^{\left( \alpha ,\beta ,v\right) }\left\Vert P_{n}^{\left( \alpha ,\beta ,v\right) }\left( t\right)
\right\Vert ^{2} . \label{propB}
\end{eqnarray}
\section{Rodrigues formula}\label{secRodr}

In this section we will provide a Rodrigues formula for the sequence of monic
orthogonal polynomials $\left( P_{n}^{\left( \alpha ,\beta ,v\right)
}\right) _{n\geq 0}$ with respect to the weight matrix $W=W^{\left( \alpha
,\beta ,v\right) }$ in (\ref{W}). Moreover, the Rodrigues formula will allow us
to find an explicit expression for the polynomials in terms of Jacobi
polynomials.

\begin{theorem}
\label{Rodrigues} Consider the weight matrix $W(t)=W^{\left( \alpha ,\beta ,v\right) }(t)$ given by the expression in (\ref{W}) and (\ref{Wtilde}). Consider the matrix-valued functions
 $\left( P_{n}\right) _{n\geq 0}$ and $\left( R_{n}\right) _{n\geq 0}$  defined by 
\begin{eqnarray}
P_{n}\left( t\right) &=&\left(
R_{n}\left( t\right) \right) ^{\left(
n\right) }\left(W\left( t\right)\right)^{-1} ,\text{ \ }  \label{Rn} \\
R_{n}(t)=R_{n}^{\left( \alpha ,\beta ,v\right) }\left( t\right) &=&t^{n+\alpha
}\left( 1-t\right) ^{n+\beta }\left( R_{n,2}^{\left( \alpha ,\beta ,v\right)
}t^{2}+R_{n,1}^{\left( \alpha ,\beta ,v\right) }t+R_{n,0}^{\left( \alpha ,\beta
,v\right) }\right), \label{Rn2}
\end{eqnarray}%
with%
\begin{eqnarray*}
R_{n,2}^{\left( \alpha ,\beta ,v\right) } =R_{n,2}&=&%
\begin{pmatrix}
c_{n} & 0  \\
0 & d_{n}
\end{pmatrix}%
,\\ R_{n,1}^{\left( \alpha ,\beta ,v\right) }&=&R_{n,1}=
\frac{1}{v}\begin{pmatrix}
-c_{n}\kappa_{v,-\beta} & \dfrac{c_{n}(\alpha +2n+2+\beta )\kappa_{v,-\beta}}{\left( \kappa_{v,\beta} +2n+2\right) } \\ 
-\dfrac{d_{n}(\alpha +2n+2+\beta )\kappa_{-v,-\beta}}{\left( \kappa_{-v,\beta}
+2n+2\right) } & d_{n}\kappa_{-v,-\beta}%
\end{pmatrix}%
,\text{ } \\
\text{\ \ }R_{n,0}^{\left( \alpha ,\beta ,v\right) }=R_{n,0} &=&\frac{1+n+\alpha}{v}%
\begin{pmatrix}
c_{n}\dfrac{\kappa_{v,-\beta}}{\left( \kappa_{v,\beta} +2n+2\right) } & -c_{n}%
\dfrac{\kappa_{v,-\beta}}{\left( \kappa_{v,\beta} +2n+2\right) } \\ 
d_{n}\dfrac{\kappa_{-v,-\beta}}{\left( \kappa_{-v,\beta} +2n+2\right) } & -d_{n}%
\dfrac{\kappa_{-v,-\beta}}{\left( \kappa_{-v,\beta} +2n+2\right) }%
\end{pmatrix}%
,
\end{eqnarray*}%

where $(c_n)_n$ and $(d_n)_n$ are arbitrary sequences of complex numbers. Then $P_n(t)$
is a polynomial of degree $n$ with non-singular leading coefficient equal to%
\begin{equation*}
\begin{pmatrix}
\dfrac{\kappa_{v,-\beta} \left( \alpha +\beta +n+3\right) _{n}}{%
\left( -1\right) ^{n}v\left( \kappa_{v,\beta} +2\right) }c_{n} & 0 \\ 
0 & \dfrac{\kappa_{-v,-\beta} \left( \alpha +\beta +n+3\right)
_{n}}{\left( -1\right) ^{n+1}v\left( \kappa_{-v,\beta}+2\right) }d_{n}%
\end{pmatrix}%
,
\end{equation*}%
where $(a)_n=a(a+1)\ldots(a+n-1)$ denotes the usual Pochhammer symbol.
Moreover, if we put
\begin{equation}\label{condicion monico}
c_{n}=\frac{\left( -1\right) ^{n}v\left( \kappa_{v,\beta}+2\right) }{\kappa_{v,-\beta} \left( \alpha +\beta +n+3\right) _{n}},\text{ \ \ }%
d_{n}=\frac{\left( -1\right) ^{n+1}v\left( \kappa_{-v,\beta}+2\right) }{%
\kappa_{-v,-\beta} \left( \alpha +\beta +n+3\right) _{n}},
\end{equation}%
then $\left( P_{n}\right) _{n\geq 0}$ is a sequence of monic orthogonal 
polynomials with respect to $W$ and $P_{n}=P_{n}^{\left( \alpha
,\beta ,v\right) }.$
\end{theorem}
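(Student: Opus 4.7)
The plan is to apply Theorem \ref{D10R} (Dur\'an's lemma) with $F_2, F_1, F_0$ as in (\ref{def_F(t)}) and $\Delta_n = \Lambda_n$ from (\ref{LN}), thereby obtaining that $P_n := R_n^{(n)} W^{-1}$ is an eigenfunction of $D^{(\alpha,\beta,v)}$ with eigenvalue $\Lambda_n$; then to check that $P_n$ is actually a matrix polynomial of degree $n$ with the stated diagonal leading coefficient; and finally to identify $P_n$ with $P_n^{(\alpha,\beta,v)}$ via a short uniqueness argument once the normalization (\ref{condicion monico}) is imposed.

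First I would verify the hypothesis of Theorem \ref{D10R}: substitute $R_n(t) = t^{n+\alpha}(1-t)^{n+\beta}\, Q_n(t)$ with $Q_n(t) = R_{n,2}t^2 + R_{n,1}t + R_{n,0}$ into the identity
\[
\bigl(R_n F_2^*\bigr)'' - \bigl(R_n[F_1^* + n(F_2^*)']\bigr)' + R_n\bigl[F_0^* + n(F_1^*)' + \binom{n}{2}(F_2^*)''\bigr] = \Lambda_n R_n.
\]
After factoring out $t^{n+\alpha-1}(1-t)^{n+\beta-1}$, this becomes a matrix polynomial identity in $t$ of degree at most $4$. Matching the five matrix coefficients gives a linear system for $R_{n,2}, R_{n,1}, R_{n,0}$; its general solution has two degrees of freedom (the diagonal entries $c_n,d_n$ of $R_{n,2}$), and the off-diagonal entries of $R_{n,1}$ and $R_{n,0}$ are forced to the forms displayed in the statement. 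This is heavy but routine matrix algebra, made tractable by the fact that $\Lambda_n$, $V$, and $U$ are (block-)diagonal.

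Next I would show $P_n = R_n^{(n)} W^{-1}$ is polynomial of degree $n$. Leibniz gives
\[
R_n^{(n)} = \sum_{k=0}^{2}\binom{n}{k}\bigl[t^{n+\alpha}(1-t)^{n+\beta}\bigr]^{(n-k)} Q_n^{(k)},
\]
and each scalar factor $\bigl[t^{n+\alpha}(1-t)^{n+\beta}\bigr]^{(j)}$ equals $t^{n-j+\alpha}(1-t)^{n-j+\beta}$ times a polynomial (the classical Jacobi Rodrigues identity). Writing $W^{-1} = t^{-\alpha}(1-t)^{-\beta}\widetilde W^{-1}$, the $t^\alpha(1-t)^\beta$ factors cancel, but $\widetilde W^{-1} = (\det \widetilde W)^{-1}\,\mathrm{adj}(\widetilde W)$ introduces a potential pole from $\det \widetilde W$. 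The crucial check is that the particular off-diagonal entries of $R_{n,1}$ and $R_{n,0}$ make the numerator divisible by $\det \widetilde W$, so that the spurious poles cancel and $P_n$ is polynomial. The leading coefficient comes from the $k=0$ term: $\bigl[t^{n+\alpha}(1-t)^{n+\beta}\bigr]^{(n)}$ has leading term $(-1)^n (n+\alpha+\beta+1)_n\, t^n$, and $Q_n(t)\widetilde W(t)^{-1}$ at $t\to\infty$ reduces to a diagonal matrix; the explicit computation produces the stated leading coefficient.

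Finally, by Theorem \ref{D10R}, $P_n$ satisfies $P_n\cdot D^{(\alpha,\beta,v)} = \Lambda_n P_n$. Comparing the $t^n$ coefficient on both sides shows that any polynomial eigenfunction of degree $n$ must have a leading coefficient $L_n$ satisfying $L_n V = V L_n$; since $V=\mathrm{diag}(v,0)$ with $v\neq 0$, this forces $L_n$ to be diagonal, and the lower-order coefficients are then uniquely determined (because $\Lambda_n\neq\Lambda_k$ for $k<n$ under the parameter hypothesis on $v$). The polynomial eigenspace of $D^{(\alpha,\beta,v)}$ of degree $n$ with eigenvalue $\Lambda_n$ is thus two-dimensional, parametrized by the diagonal of $L_n$; this matches the two free parameters $c_n,d_n$ in the Rodrigues formula. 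With the choice (\ref{condicion monico}) the leading coefficient becomes $I$, so $P_n$ is the unique monic polynomial eigenfunction, namely $P_n^{(\alpha,\beta,v)}$. The main obstacle is the polynomial character in the second step: verifying that the apparent poles from $(\det\widetilde W)^{-1}$ actually cancel. This is precisely where the delicate ratios appearing in $R_{n,1}$ and $R_{n,0}$ (with denominators $\kappa_{\pm v,\beta}+2n+2$) are essential; Steps 1 and 3 are, respectively, a lengthy but mechanical verification and a short uniqueness argument.
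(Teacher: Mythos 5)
Your proposal follows essentially the same route as the paper: it invokes Theorem \ref{D10R} with the coefficients of $D^{(\alpha,\beta,v)}$ and $\Delta_n=\Lambda_n$, establishes the polynomial character of $R_n^{(n)}W^{-1}$ by writing $W^{-1}$ explicitly and checking that the apparent poles coming from $\det\widetilde W^{(\alpha,\beta,v)}\propto t(1-t)^2\cdot(\cdot)$ cancel (the paper phrases this as $Q(0)=Q(1)=Q'(1)=0$), extracts the leading coefficient from the top-degree term, and concludes by uniqueness of the monic degree-$n$ polynomial eigenfunction. The only differences are organizational (Leibniz expansion versus term-by-term Jacobi--Rodrigues identities, and a more explicit uniqueness/Sylvester-equation argument at the end, which the paper leaves implicit), so the proposal is correct and matches the paper's proof.
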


\begin{proof}
Let $W$ be the weight matrix given in (\ref{W}) and  $F_{2,}$ $%
F_{1},$ $F_{0}$ and $\Lambda _{n}$ are  the
polynomials coefficients defined in (\ref{def_F(t)})-(\ref{LN}).


 Following straightforward computations, we can prove that the matrix-valued function $R_{n}(t)$
satisfies the equation 
\begin{equation*}
\left( R_{n}F_{2}^{\ast }\right) ^{\prime \prime }-\left( R_{n}[F_{1}^{\ast
}+n\left( F_{2}^{\ast }\right) ^{\prime }]\right) ^{\prime
}+R_{n}[F_{0}^{\ast }+n\left( F_{1}^{\ast }\right) ^{\prime }+%
\begin{pmatrix}
n \\ 
2%
\end{pmatrix}%
\left( F_{2}^{\ast }\right) ^{^{\prime \prime }}]=\Lambda _{n}R_{n}.
\end{equation*}%
Theorem \ref{D10R} guarantees that the function $P_{n}\left(t\right)=\left( R_{n}\left(
t\right) \right) ^{n} \left(W\left( t\right)\right)^{-1} $ is an eigenfunction of $%
D^{\left( \alpha ,\beta ,v\right) }$ with eigenvalue $\Lambda _{n}$ given in (\ref{LN}).
 Then $ P_{n}^{\left( \alpha ,\beta ,v\right) }\left(t\right)$ and 
$P_{n}\left(t\right)$ satisfy the same differential equation.

We will prove that $P_{n}$ is a polynomial of degree $n$ with non-singular
leading coefficient. We will use the following Rodrigues formula for
the classical Jacobi polynomial $p_{n}^{(\alpha ,\beta )}(t)$ (\cite[Chapter IV]{Sz})%
\begin{equation*}
\frac{d^{n}}{dt^{n}}\left[ t^{n+\alpha }\left( 1-t\right) ^{n+\beta }\right]
=n!t^{\alpha }\left( 1-t\right) ^{\beta }p_{n}^{(\alpha ,\beta )}(1-2t),
\end{equation*}%
where%
\begin{equation}
p_{n}^{(\alpha ,\beta )}(1-2t)=\frac{\Gamma \left( n+\alpha+1\right) }{n!\Gamma
\left( n+\alpha+\beta +1\right) }\sum_{j=0}^{n}%
\begin{pmatrix}
n \\ 
j%
\end{pmatrix}%
\frac{\Gamma \left( n+\alpha+\beta +1+j\right) }{\Gamma \left( j+\alpha+1\right) }%
\left( -1\right) ^{j}t^{j}.  \label{PJE}
\end{equation}
Thus, we obtain%
\begin{equation*}
R_{n}^{(n)}\left( t\right) =n!t^{\alpha }\left( 1-t\right) ^{\beta }\left(
p_{n}^{(\alpha +2,\beta )}(1-2t)R_{n,2}t^{2}+p_{n}^{(\alpha +1,\beta
)}(1-2t)R_{n,1}t+p_{n}^{(\alpha ,\beta )}(1-2t)R_{n,0}\right).
\end{equation*}%
We can rewrite $\left(W\left( t\right)\right)^{-1}$ as 
\begin{equation*}
\left(W\left( t\right)\right)^{-1} =t^{-\alpha -2}\left( 1-t\right) ^{-\beta -2}\left(
J_{2}t^{2}+J_{1}t+J_{0}\right),
\end{equation*}%
with%
\begin{eqnarray*}
J_{2} &=&%
\begin{pmatrix}
\dfrac{\kappa_{v,-\beta}}{v(\kappa_{v,\beta} +2)} & 0 \\ 
0 & -\dfrac{\kappa_{-v,-\beta} }{v(\kappa_{-v,\beta}+2)}%
\end{pmatrix},%
\text{ \ } J_{0} =\dfrac{-\kappa_{v,-\beta} \kappa_{-v,-\beta} (\alpha+1)}{v^{2}(\kappa_{v,\beta}+2)(\kappa_{-v,\beta}
+2)}\begin{pmatrix}
1 & 1 \\ 
1 & 1%
\end{pmatrix},\\
J_{1}&=&\dfrac{\kappa_{v,-\beta} \kappa_{-v,-\beta}}{v^{2}} 
\begin{pmatrix}
\dfrac{1}{(\kappa_{v,\beta} +2)} & \dfrac{(\alpha +\beta
+2)}{(\kappa_{v,\beta} +2)(\kappa_{-v,\beta}
 +2)} \\ 
\dfrac{(\alpha +\beta +2)}{(\kappa_{v,\beta} +2)(\kappa_{-v,\beta}+2)} & \dfrac{1}{(\kappa_{-v,\beta}+2)}%
\end{pmatrix}.
\end{eqnarray*}%
Observe that $R_{n,0}J_{0}=0.$ Thus, $P_{n}\left( t\right) $ becomes%
\begin{equation*}
\begin{array}{c}
P_{n}\left( t\right) =n!t^{-1}\left( 1-t\right) ^{-2}\left[p_{n}^{(\alpha+2,\beta )}(1-2t)R_{n,2}t\left( J_{2}t^{2}+J_{1}t+J_{0}\right)\right. \\ 
\quad \quad \quad \quad \quad +\text{ }p_{n}^{(\alpha +1,\beta
)}(1-2t)R_{n,1}\left( J_{2}t^{2}+J_{1}t+J_{0}\right) +p_{n}^{(\alpha ,\beta)}(1-2t)R_{n,0}\left( J_{2}t+J_{1}\right) \left.\right].%
\end{array}%
\end{equation*}%
Hence, $P_{n}\left(t\right)$ is a polynomial of degree $n$ if and only if $t=0$ and $t=1$
are zeros of the following polynomial 
\begin{eqnarray*}
Q\left( t\right) &=&p_{n}^{(\alpha +2,\beta )}(1-2t)R_{n,2}t\left(
J_{2}t^{2}+J_{1}t+J_{0}\right) +\text{ }p_{n}^{(\alpha +1,\beta
)}(1-2t)R_{n,1}\left( J_{2}t^{2}+J_{1}t+J_{0}\right) \\
&&+p_{n}^{(\alpha ,\beta )}(1-2t)R_{n,0}\left( J_{2}t+J_{1}\right) 
\end{eqnarray*}%
and $t=1$ has multiplicity two, i.e., $Q\left( 0\right) =Q\left( 1\right)
=Q^{\prime }\left( 1\right) =0$.

Taking into account that $p_{n}^{(\alpha ,\beta )}(1)=\dfrac{\Gamma \left( n+\alpha
+1\right) }{n!\Gamma \left( \alpha +1\right) }$ and $p_{n}^{(\alpha ,\beta
)}(-1)=(-1)^{n}\dfrac{\Gamma \left( n+\beta +1\right) }{n!\Gamma \left( \beta
+1\right) }$ we have%
\begin{eqnarray*}
Q\left( 0\right) &=&p_{n}^{(\alpha +1,\beta )}(1)R_{n,1}J_{0}+p_{n}^{(\alpha
,\beta )}(1)R_{n,0}J_{1}=\frac{\Gamma \left( n+\alpha +1\right) }{n!\Gamma
\left( \alpha +1\right) }\left( \frac{n+\alpha +1}{\alpha +1}%
R_{n,1}J_{0}+R_{n,0}J_{1}\right) =\mathbf{0}, \\
Q\left( 1\right) &=&\left( -1\right) ^{n}\frac{\Gamma \left( n+\beta
+1\right) }{n!\Gamma \left( \beta +1\right) }(R_{n,2}+R_{n,1}+R_{n,0})\left(
J_{2}+J_{1}+J_{0}\right) =\mathbf{0}.
\end{eqnarray*}%
Now, by taking derivative of $Q\left( t\right) $ with respect to $t$ and
considering the identity$$\left( p_{n}^{(\alpha ,\beta )}\right) ^{\prime }\left( -1\right) =%
 \d \frac{\left( \beta +\alpha +n+1\right) \left( -1\right) ^{n-1}\Gamma \left(
n+\beta +1\right) }{2\left( n-1\right) !\Gamma \left( \beta +2\right), }$$ we
obtain 
\begin{eqnarray*}
	&Q^{\prime }\left( 1\right) =-2\left(\left( p_{n}^{(\alpha +2,\beta )}\right)
	^{\prime }(-1)R_{n,2}+ \left( p_{n}^{(\alpha +1,\beta )}\right)^{\prime }(-1)R_{n,1}+
	\left( p_{n}^{(\alpha ,\beta )}\right) ^{\prime }(-1)R_{n,0}
	\right)
	\left( J_{2}+J_{1}+J_{0}\right)\\
		&+p_{n}^{(\alpha +2,\beta
		)}(-1)R_{n,2}\left( 3J_{2}+2J_{1}+J_{0}\right) 
 +p_{n}^{(\alpha +1,\beta )}(-1)R_{n,1}\left(
	2J_{2}+J_{1}\right)
+p_{n}^{(\alpha ,\beta )}(-1)R_{n,0}J_{2}=0.
\end{eqnarray*}%

This shows that $Q\left( t\right) $ is divisible by $t\left( t-1\right)
^{2}\,$ therefore, $P_{n}\left(t\right)$ is a polynomial of degree $n$ since $\deg \left(
Q(t)\right) =n+3$.

Observe that the leading coefficient of $P_{n}\left(t\right)$ is determined by the
leading coefficient of\\
 $n!p_{n}^{(\alpha +2,\beta )}(1-2t)R_{n,2}J_{2}t^{4}$.
Considering (\ref{PJE}) we have%
\begin{equation*}
\frac{\left( -1\right) ^{n}\Gamma \left( 2n+\alpha+\beta +3\right) }{\Gamma
\left( n+\alpha+\beta +3\right) }R_{n,2}J_{2}=%
\dfrac{\left( -1\right) ^{n}\left( \alpha
+\beta +n+3\right) _{n}}{v }\begin{pmatrix}
\dfrac{ \kappa_{v,-\beta}}{\kappa_{v,\beta}+2 }c_{n} & 0 \\ 
0 & -\dfrac{ \kappa_{-v,-\beta}}{ \kappa_{-v,\beta}+2 }d_{n}%
\end{pmatrix}%
.
\end{equation*}%
The previous matrix coefficient is non-singular since $|\alpha -\beta |<|v|<\alpha +\beta
+2.$

Moreover, if (\ref{condicion monico}) holds true then $P_{n}\left( t\right) $ is a monic polynomial and
equal to $ P_{n}^{\left( \alpha ,\beta ,v\right) }\left(t \right)$.
\end{proof}

\begin{corollary}
	Consider the weight matrix $W^{(\alpha ,\beta ,v)}(t)$  given in (\ref{W}) and (\ref{Wtilde}). Then, the   monic  orthogonal polynomials $P_{n}^{\left(
		\alpha ,\beta ,v\right) }(t)$ satisfy the Rodrigues formula
	
	$${P_{n}^{(\alpha ,\beta ,v)}}(t)= (R_{n}^{(\alpha ,\beta ,v)}(t))^{(n)}\left( W^{(\alpha ,\beta ,v)}(t)\right)^{-1}.$$
\end{corollary}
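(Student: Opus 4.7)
The plan is to obtain the corollary as an immediate consequence of Theorem \ref{Rodrigues}. I would begin by observing that in the definition of $R_n^{(\alpha,\beta,v)}(t)$ given by (\ref{Rn2}), the complex numbers $c_n, d_n$ are free scalar parameters that multiply the output $(R_n^{(\alpha,\beta,v)}(t))^{(n)}(W^{(\alpha,\beta,v)}(t))^{-1}$. Theorem \ref{Rodrigues} computes the leading coefficient of this matrix polynomial and shows that the particular choice (\ref{condicion monico}) renders that leading coefficient the identity matrix, so the polynomial becomes monic.

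Next, I would invoke the last assertion of Theorem \ref{Rodrigues}, namely that under this normalization the sequence $(P_n)_{n\geq 0}$ defined by $P_n(t) = (R_n^{(\alpha,\beta,v)}(t))^{(n)}(W^{(\alpha,\beta,v)}(t))^{-1}$ is the sequence of monic orthogonal polynomials with respect to $W^{(\alpha,\beta,v)}$, i.e., $P_n = P_n^{(\alpha,\beta,v)}$. Reading this equality with the explicit form of $R_n^{(\alpha,\beta,v)}$ as in (\ref{Rn2}), subject to (\ref{condicion monico}), yields exactly the Rodrigues formula stated in the corollary.

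No step of this plan presents a real obstacle: the substantive content, namely that $P_n$ is a polynomial of degree $n$, the explicit formula for its leading coefficient, the verification (via Theorem \ref{D10R}) that $P_n$ is an eigenfunction of $D^{(\alpha,\beta,v)}$ with the diagonal eigenvalue $\Lambda_n$, and the subsequent identification with the unique monic polynomial eigenfunction $P_n^{(\alpha,\beta,v)}$ (uniqueness being ensured by the distinct diagonal entries of $\Lambda_n$), was already carried out within the proof of Theorem \ref{Rodrigues}. The corollary simply packages that conclusion as a standalone Rodrigues-type identity.
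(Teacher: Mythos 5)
Your proposal is correct and matches the paper's (implicit) treatment: the corollary is an immediate restatement of the final assertion of Theorem \ref{Rodrigues}, where the choice (\ref{condicion monico}) of $c_n$ and $d_n$ makes $P_n$ monic and identifies it with $P_n^{(\alpha,\beta,v)}$. Nothing further is needed.
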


We can see in the proof of Theorem \ref{Rodrigues} that Rodrigues' formula
allows us to find an explicit expression for the polynomials in terms of the classical
Jacobi polynomials.

\begin{corollary}
\label{RodriguesJacobi} Consider the matrix-valued function $\widetilde{W}^{(\alpha ,\beta ,v)}(t)$  given in (\ref{Wtilde}) and let  $R_{n,i}^{(\alpha ,\beta ,v)}$, $i=0,1,2$, be as in Theorem \ref%
{Rodrigues}. Define the coefficients $c_n$ and $d_n$ as in (\ref{condicion monico}). Then, the sequence of monic orthogonal polynomials $\left(
P_{n}\right) _{n\geq 0},$ defined by (\ref{Rn}) can be written as%
\begin{equation}
\small{P_{n}\left( t\right) =n!\left( p_{n}^{(\alpha
+2,\beta )}(1-2t)R_{n,2}^{\left( \alpha ,\beta ,v\right) }t^{2}+p_{n}^{(\alpha +1,\beta
)}(1-2t)R_{n,1}^{\left( \alpha ,\beta ,v\right) }t+p_{n}^{(\alpha ,\beta )}(1-2t)R_{n,0}^{\left( \alpha ,\beta ,v\right) }\right) (\widetilde{W}^{(\alpha ,\beta ,v)}%
\left( t\right))^{-1} }. \label{PfunJ}
\end{equation}%
Moreover,

\begin{equation}
P_{n}\left( t\right) =n!\left( p_{n}^{(\alpha
,\beta )}(1-2t)\mathscr{C}_{n,2}^{(\alpha,\beta,v)}+p_{n+1}^{(\alpha ,\beta )}(1-2t)\mathscr{C}_{n,1}^{(\alpha,\beta,v)}+p_{n+2}^{(\alpha
,\beta )}(1-2t)\mathscr{C}_{n,0}^{(\alpha,\beta,v)}\right) (\widetilde{W}^{(\alpha ,\beta ,v)}\left( t\right))^{-1}  , \label{PfunJ2}
\end{equation}%
with 
\begin{eqnarray*}
&&\mathscr{C}_{n,2}^{(\alpha,\beta,v)}=\dfrac{\left( \beta +n+1\right) \left( \alpha +n+1\right) }{\left(
\alpha +\beta +2n+2\right) \left( \alpha +\beta +2n+3\right) }%
\begin{pmatrix}
\dfrac{c_{n}\left( \kappa_{-v,\beta} +2n+4\right) }{\kappa_{v,\beta}+2n+2} & 0
\\ 
0 & \dfrac{d_{n}\left( \kappa_{v,\beta} +2n+4\right) }{\kappa_{-v,\beta} +2n+2}%
\end{pmatrix},%
\  \\
&&\mathscr{C}_{n,1}^{(\alpha,\beta,v)}=\frac{n+1}{v}%
\begin{pmatrix}
\dfrac{\left( \alpha -\beta \right) \left( \kappa_{-v,\beta} +2n+4\right) c_{n}%
}{\left( \alpha +\beta +2n+2\right) \left( \alpha +\beta +2n+4\right) } & -%
\dfrac{c_{n} \kappa_{v,-\beta} }{\kappa_{v,\beta}+2n+2} \\ 
\dfrac{d_{n}\kappa_{-v,-\beta} }{\kappa_{-v,\beta}+2n+2} & -\dfrac{%
\left( \alpha -\beta \right) \left( \kappa_{v,\beta}+2n+4\right) d_{n}}{%
\left( \alpha +\beta +2n+2\right) \left( \alpha +\beta +2n+4\right) }%
\end{pmatrix},
\\
&&\mathscr{C}_{n,0}^{(\alpha,\beta,v)}=\frac{\left( n+1\right) \left( n+2\right) }{\left( \alpha +\beta
+2n+4\right) \left( \alpha +\beta +2n+3\right) }%
\begin{pmatrix}
c_{n} & 0 \\ 
0 & d_{n}%
\end{pmatrix}.%
\end{eqnarray*}
\end{corollary}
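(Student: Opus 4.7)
The plan is as follows. The first identity \eqref{PfunJ} is essentially read off from the proof of Theorem \ref{Rodrigues}. There it was established that
\begin{equation*}
R_{n}^{(n)}(t) = n!\,t^{\alpha}(1-t)^{\beta}\bigl(p_{n}^{(\alpha+2,\beta)}(1-2t)\, R_{n,2}\, t^{2} + p_{n}^{(\alpha+1,\beta)}(1-2t)\, R_{n,1}\, t + p_{n}^{(\alpha,\beta)}(1-2t)\, R_{n,0}\bigr).
\end{equation*}
Since $W^{(\alpha,\beta,v)}(t) = t^{\alpha}(1-t)^{\beta}\widetilde{W}^{(\alpha,\beta,v)}(t)$, its inverse factors as $(\widetilde{W}^{(\alpha,\beta,v)}(t))^{-1}\,t^{-\alpha}(1-t)^{-\beta}$. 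Multiplying $R_{n}^{(n)}(t)$ on the right by this inverse cancels the $t^{\alpha}(1-t)^{\beta}$ prefactor, which gives \eqref{PfunJ} directly.

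For \eqref{PfunJ2}, the aim is to rewrite the three Jacobi polynomials appearing in \eqref{PfunJ} in the common basis $\{p_{n}^{(\alpha,\beta)}(1-2t),\,p_{n+1}^{(\alpha,\beta)}(1-2t),\,p_{n+2}^{(\alpha,\beta)}(1-2t)\}$. I would invoke the classical contiguous relation
\begin{equation*}
t\,p_{m}^{(\alpha+1,\beta)}(1-2t) = \frac{(m+\alpha+1)\,p_{m}^{(\alpha,\beta)}(1-2t) - (m+1)\,p_{m+1}^{(\alpha,\beta)}(1-2t)}{2m+\alpha+\beta+2},
\end{equation*}
applied once to the $(\alpha+1,\beta)$ piece of \eqref{PfunJ}, and applied twice (first with parameters shifted by one) to the $(\alpha+2,\beta)$ piece, after writing $t^{2}p_{n}^{(\alpha+2,\beta)}(1-2t) = t\,[t\,p_{n}^{(\alpha+2,\beta)}(1-2t)]$. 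This reduces every $(\alpha{+}1,\beta)$ and $(\alpha{+}2,\beta)$ occurrence to polynomials of parameters $(\alpha,\beta)$ and indices $n$, $n+1$, $n+2$.

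After substitution into \eqref{PfunJ}, the coefficient of $p_{n+2}^{(\alpha,\beta)}(1-2t)$ receives a contribution only from $t^{2}p_{n}^{(\alpha+2,\beta)}(1-2t)$, the coefficient of $p_{n+1}^{(\alpha,\beta)}(1-2t)$ from the $(\alpha+2,\beta)$ and $(\alpha+1,\beta)$ pieces, and the coefficient of $p_{n}^{(\alpha,\beta)}(1-2t)$ from all three. Inserting the explicit entries of $R_{n,0}$, $R_{n,1}$, $R_{n,2}$ recorded in Theorem \ref{Rodrigues} and simplifying yields the matrices $\mathscr{C}_{n,0}^{(\alpha,\beta,v)}$, $\mathscr{C}_{n,1}^{(\alpha,\beta,v)}$, $\mathscr{C}_{n,2}^{(\alpha,\beta,v)}$ in the stated closed form.

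The main obstacle is pure bookkeeping: the $R_{n,i}$ carry the factors $\kappa_{\pm v,\pm\beta}$ and denominators $\kappa_{\pm v,\beta}+2n+2$, while the contiguous relation introduces factors $(n{+}\alpha{+}1),(n{+}\alpha{+}2)$ and denominators $(\alpha{+}\beta{+}2n{+}2),(\alpha{+}\beta{+}2n{+}3),(\alpha{+}\beta{+}2n{+}4)$. Matching these into the compact closed form, especially for the off-diagonal entries of $\mathscr{C}_{n,1}^{(\alpha,\beta,v)}$ which require collecting two rational contributions into the single factor $(\alpha-\beta)/[(\alpha+\beta+2n+2)(\alpha+\beta+2n+4)]$, is the only delicate step, but each individual entry-level computation is routine.
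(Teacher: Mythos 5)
Your proposal is correct and follows essentially the same route as the paper: identity \eqref{PfunJ} is read off from the factorization $W^{-1}=(\widetilde W)^{-1}t^{-\alpha}(1-t)^{-\beta}$ together with the expression for $R_n^{(n)}$ already derived in the proof of Theorem \ref{Rodrigues}, and \eqref{PfunJ2} is obtained by applying the same contiguous relation from \cite[Section 4.5]{Sz} (once to the $(\alpha+1,\beta)$ term and twice to the $(\alpha+2,\beta)$ term) that the paper uses.
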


\begin{proof}
The expression in (\ref{PfunJ}) follows from the  proof above and to obtain (\ref%
{PfunJ2}) we use the following property for the classical Jacobi polynomials  $p_{n}^{(\alpha,\beta )}(t)$ (\cite[Section 4.5]{Sz}):
\begin{equation*}
p_{n}^{(\alpha +1,\beta )}(1-2t)=\frac{\left(
n+\alpha +1\right) p_{n}^{(\alpha ,\beta )}(1-2t)-\left( n+1\right)
p_{n+1}^{(\alpha ,\beta )}(1-2t)}{(2n+\alpha +\beta +2)t}
\end{equation*}%
in (\ref{PfunJ}).
\end{proof}

\section{Orthonormal Polynomials}\label{orto}

In this section we give an explicit expression for the norm of the matrix-valued polynomials $%
\left(P_{n}^{\left( \alpha ,\beta ,v\right) }\right)_{n\geq0} $. In addition, for the sequence of orthonormal polynomials we
show the three-term recurrence relation and the Christoffel-Darboux
formula,  introduced for a
general sequence of matrix-valued orthogonal polynomials in \cite{Dur96} (see also \cite{DL1}).

\begin{proposition}
The norm of the  monic orthogonal polynomials $P_{n}^{\left( \alpha ,\beta
,v\right) }\left( t\right) $, $n\geq 0$, is determined by 
\begin{equation}\label{norm}
\left\Vert P_{n}^{\left( \alpha ,\beta ,v\right) }
\right\Vert ^{2}=\small{\frac{n!vB\left( \alpha +n+2,\beta +n+2\right) }{\left(
		\alpha +n+3+\beta \right) _{n}}}%
\begin{pmatrix}
\dfrac{\left( \kappa_{v,\beta}+2\right) \left( \kappa_{-v,\beta} +2n+4\right) }{%
	\kappa_{v,-\beta} \left( \kappa_{v,\beta} +2n+2\right) } & 0 \\ 
0 & -\dfrac{\left( \kappa_{-v,\beta}+2\right) \left( \kappa_{v,\beta}+2n+4\right) }{\kappa_{-v,-\beta} \left( \kappa_{-v,\beta}+2n+2\right) }%
\end{pmatrix}.%
\end{equation}%
Therefore the sequence of polynomials 
\begin{equation*}
\widetilde{P}_{n}^{\left( \alpha ,\beta ,v\right) }\left( t\right)
=\left\Vert P_{n}^{\left( \alpha
	,\beta ,v\right) }\right\Vert ^{-1}P_{n}^{\left( \alpha ,\beta ,v\right) }(t)
\end{equation*}
is orthonormal with respect to $W.$
\end{proposition}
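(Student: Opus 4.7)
The plan is to compute the norm via the Rodrigues representation of Theorem \ref{Rodrigues}. Writing $W = W^{(\alpha,\beta,v)}$ and $P_n = P_n^{(\alpha,\beta,v)}$, we have
\begin{equation*}
\langle P_n, P_n\rangle_W = \int_0^1 P_n(t)\,W(t)\,P_n^{*}(t)\,dt = \int_0^1 R_n^{(n)}(t)\,W^{-1}(t)\,W(t)\,P_n^{*}(t)\,dt = \int_0^1 R_n^{(n)}(t)\,P_n^{*}(t)\,dt,
\end{equation*}
so the matrix weight drops out entirely; only $R_n$ survives. This is the central simplification I would exploit.

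Next, I would perform $n$ successive integrations by parts. The boundary terms vanish at both endpoints because $R_n(t)=t^{n+\alpha}(1-t)^{n+\beta}(R_{n,2}t^2+R_{n,1}t+R_{n,0})$ carries factors $t^{n+\alpha}$ and $(1-t)^{n+\beta}$, so every derivative $R_n^{(j)}$ for $j<n$ still vanishes at $t=0$ and $t=1$ (this is the standard boundary argument valid under $\alpha,\beta>-1$). Hence
\begin{equation*}
\langle P_n,P_n\rangle_W = (-1)^n\int_0^1 R_n(t)\,(P_n^{*})^{(n)}(t)\,dt.
\end{equation*}
Since $P_n$ is monic of degree $n$, $(P_n^{*})^{(n)}(t) = n!\,I$, and therefore
\begin{equation*}
\|P_n\|^2 = (-1)^n n!\int_0^1 R_n(t)\,dt = (-1)^n n!\left(R_{n,2}I_2 + R_{n,1}I_1 + R_{n,0}I_0\right),
\end{equation*}
where $I_k := \int_0^1 t^{n+\alpha+k}(1-t)^{n+\beta}dt = B(n+\alpha+k+1,\,n+\beta+1)$. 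Using the identities $B(x+1,y)=\frac{x}{x+y}B(x,y)$ and $B(x+2,y)=\frac{x(x+1)}{(x+y)(x+y+1)}B(x,y)$, I can factor out $B(n+\alpha+2,n+\beta+2)$ from all three integrals and collect $(\alpha+\beta+n+3)_n$ denominators.

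Finally, I would substitute the explicit $R_{n,0},R_{n,1},R_{n,2}$ from Theorem \ref{Rodrigues} together with the monic normalisation (\ref{condicion monico}) for $c_n,d_n$. The key calculation reduces to checking that the off-diagonal combinations coming from $R_{n,1}$ and $R_{n,0}$ cancel: their $(1,2)$ and $(2,1)$ entries are linear combinations of three beta ratios with coefficients proportional to $\kappa_{v,-\beta}$ or $\kappa_{-v,-\beta}$, and the algebraic identity that makes them cancel is precisely the one that forces $P_n$ to be a polynomial (i.e.\ the conditions $Q(0)=0$ and $Q(1)=0$ used in the proof of Theorem \ref{Rodrigues}). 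The diagonal entries then simplify, and after inserting (\ref{condicion monico}), one obtains the two displayed entries of (\ref{norm}) up to the common scalar $\tfrac{n!\,v\,B(\alpha+n+2,\beta+n+2)}{(\alpha+n+3+\beta)_n}$. The assertion that $\widetilde{P}_n = \|P_n\|^{-1}P_n$ is orthonormal follows immediately, since $\|P_n\|^2$ is Hermitian positive definite (as it is diagonal with positive entries under the parameter constraints $|\alpha-\beta|<|v|<\alpha+\beta+2$).

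The main obstacle is the off-diagonal cancellation: it is not immediately obvious from the matrices $R_{n,0}$ and $R_{n,1}$ that the $(1,2)$ and $(2,1)$ contributions disappear. I expect the cleanest way to handle this is not by brute-force beta-ratio manipulation, but by reinterpreting the cancellation through $Q(0)=0$ (which yields precisely the linear relation $\tfrac{n+\alpha+1}{\alpha+1}R_{n,1}J_0+R_{n,0}J_1=0$ used in Theorem \ref{Rodrigues}), together with an analogous relation at $t=1$. Alternatively, one can verify cancellation directly by substituting $c_n,d_n$ and simplifying; both routes are routine once set up.
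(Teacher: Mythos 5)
Your proposal is correct and follows essentially the same route as the paper: use the Rodrigues formula so that $W$ cancels, integrate by parts $n$ times (boundary terms vanishing because of the factors $t^{n+\alpha}(1-t)^{n+\beta}$) to reduce to $(-1)^n n!\int_0^1 R_n(t)\,dt$, and then evaluate with Beta-function identities before substituting the explicit $R_{n,i}$ and the monic normalisation \eqref{condicion monico}. The off-diagonal cancellation you single out as the main obstacle is in fact immediate from the explicit $(1,2)$ and $(2,1)$ entries of $R_{n,1}$ and $R_{n,0}$ once the coefficient $\tfrac{\alpha+n+1}{\alpha+\beta+2n+2}$ is inserted; the paper treats it as part of the final ``straightforward computations.''
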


\begin{proof}
Let $P_{n}^{\left( \alpha ,\beta ,v\right) }\left( t\right)
=\sum_{k=0}^{n}  \mathcal{P}_{n}^{k} t^{k},$ using  Rodrigues' formula we have
\begin{equation*}
\Vert P_{n}^{\left( \alpha ,\beta ,v\right) }\Vert
^{2}=\int_{0}^{1} P_{n}^{\left( \alpha ,\beta ,v\right) }\left(
t\right)  W\left( t\right)\left( P_{n}^{\left( \alpha ,\beta
,v\right) }\left( t\right)\right)^{\ast } dt=\sum_{k=0}^{n}\int_{0}^{1}\left( R_{n}\left(
t\right) \right) ^{(n)}\left(\mathcal{P}_{n}^{k}\right)^{\ast}t^{k}dt.
\end{equation*}

Integrating by parts $n$ times we have, 
\begin{eqnarray*}
\left\Vert P_{n}^{\left( \alpha ,\beta ,v\right) }
\right\Vert ^{2} &=&\left( -1\right)
^{n}\sum_{k=0}^{n}\int_{0}^{1}R_{n}\left( t\right) \frac{d^{n}}{dt^{n}}%
\left(\mathcal{P}_{n}^{k}\right)^{\ast}t^{k}dt=\left( -1\right) ^{n}\int_{0}^{1}R_{n}\left( t\right) \frac{%
d^{n}}{dt^{n}}t^{n}dt \\
&=&\left( -1\right) ^{n}n!\int_{0}^{1}R_{n}\left( t\right) dt=\left(
-1\right) ^{n}n!\int_{0}^{1}t^{n+\alpha }\left( 1-t\right) ^{n+\beta }\left(
R_{n,2}t^{2}+R_{n,1}t+R_{n,0}\right) dt \\
&=&\left( -1\right) ^{n}n!\left[ B\left( \alpha +n+3,\beta +n+1\right)
R_{n,2}+B\left( \alpha +n+2,\beta +n+1\right) R_{n,1}\right. \\
&&\left. +B\left( \alpha +n+1,\beta +n+1\right) R_{n,0}\right],
\end{eqnarray*}%
where $B\left( x,y\right) =\int_{0}^{1}t^{x-1}\left( 1-t\right) ^{y-1} dt$ is
the Beta function. Using the following property,%
\begin{equation*}
B\left( x+1,y\right) =\frac{x}{x+y}B\left( x,y\right)
\end{equation*}%
we obtain,%
\begin{eqnarray*}
\left\Vert P_{n}^{\left( \alpha ,\beta ,v\right) }
\right\Vert ^{2} &=&\left( -1\right) ^{n}n!B\left( \alpha +n+1,\beta
+n+1\right) \\
&&\left( \frac{\left( \alpha +n+1\right) \left( \alpha +n+2\right) }{\left(
\alpha +\beta +2n+2\right) \left( \alpha +\beta +2n+3\right) }R_{n,2}+\frac{%
\alpha +n+1}{\alpha +\beta +2n+2}R_{n,1}+R_{n,0}\right) .
\end{eqnarray*}%
Using the expressions in (\ref{Rn2}), after some straightforward  computations we complete the proof.
\end{proof}

The sequence of orthonormal polynomials satisfies the following properties.

\begin{proposition}
The orthonormal polynomials $\left( \widetilde{P}_{n}^{\left( \alpha ,\beta
,v\right) }\right) _{n\geq 0}$ satisfy the three-term recurrence relation%
\begin{equation}
t\widetilde{P}_{n}^{\left( \alpha ,\beta ,v\right) }(t)=\widetilde{A}^{\left( \alpha ,\beta
	,v\right) }_{n+1}\widetilde{P}%
_{n+1}^{\left( \alpha ,\beta ,v\right) }(t)+\widetilde{B}^{\left( \alpha ,\beta
	,v\right) }_{n}\widetilde{P}%
_{n}^{\left( \alpha ,\beta ,v\right) }(t)+\left(\widetilde{A}^{\left( \alpha ,\beta
	,v\right) }_{n}\right)^{\ast }\widetilde{P}%
_{n-1}^{\left( \alpha ,\beta ,v\right) }(t),
\label{ron}
\end{equation}%
with 
\begin{eqnarray*}
\widetilde{A}^{\left( \alpha ,\beta
	,v\right) }_{n+1} &=& \left\Vert P_{n}^{\left( \alpha ,\beta
	,v\right) } \right\Vert ^{-1}\left\Vert P_{n+1}^{\left( \alpha ,\beta ,v\right)
}\right\Vert, \\
\widetilde{B}^{\left( \alpha ,\beta
	,v\right) }_{n} &=&\left\Vert P_{n}^{\left( \alpha ,\beta ,v\right)
} \right\Vert^{-1} B^{\left( \alpha ,\beta
,v\right) }_{n}\left\Vert P_{n}^{\left( \alpha ,\beta
,v\right) } \right\Vert ,
\end{eqnarray*}%
where $B^{\left( \alpha ,\beta
	,v\right) }_{n}$ is the coefficient of the three-term recurrence relation for  the monic orthogonal polynomials $\left( P_{n}^{\left( \alpha ,\beta ,v\right) }\right)_{n\geq 0}$ (\ref{RR3}). Clearly, $\widetilde{B}_{n}^{\left( \alpha ,\beta
	,v\right) }$ is a symmetric matrix.

\end{proposition}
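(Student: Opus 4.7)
The plan is to derive the orthonormal recurrence as an immediate consequence of the monic three-term recurrence \eqref{RR3}, via the change of basis $P_{n}^{(\alpha,\beta,v)} = \lVert P_{n}^{(\alpha,\beta,v)}\rVert \,\widetilde{P}_{n}^{(\alpha,\beta,v)}$, and then to use the symmetry identities \eqref{propA} and \eqref{propB} to identify the coefficients in the form stated and to prove that $\widetilde{B}_{n}^{(\alpha,\beta,v)}$ is Hermitian. Throughout, I would interpret $\lVert P_{n}^{(\alpha,\beta,v)}\rVert$ as the (unique) Hermitian positive definite square root of the Gram matrix $\lVert P_{n}^{(\alpha,\beta,v)}\rVert^{2} = \langle P_{n}^{(\alpha,\beta,v)},P_{n}^{(\alpha,\beta,v)}\rangle_{W}$, which exists because this Gram matrix is positive definite; in particular $\lVert P_{n}^{(\alpha,\beta,v)}\rVert^{*} = \lVert P_{n}^{(\alpha,\beta,v)}\rVert$.

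To begin, I would substitute $P_{k}^{(\alpha,\beta,v)} = \lVert P_{k}^{(\alpha,\beta,v)}\rVert\,\widetilde{P}_{k}^{(\alpha,\beta,v)}$ into \eqref{RR3} for $k=n-1,n,n+1$ and left-multiply by $\lVert P_{n}^{(\alpha,\beta,v)}\rVert^{-1}$. This produces
\begin{equation*}
t\widetilde{P}_{n}^{(\alpha,\beta,v)} = \lVert P_{n}^{(\alpha,\beta,v)}\rVert^{-1}\lVert P_{n+1}^{(\alpha,\beta,v)}\rVert\,\widetilde{P}_{n+1}^{(\alpha,\beta,v)} + \lVert P_{n}^{(\alpha,\beta,v)}\rVert^{-1}B_{n}^{(\alpha,\beta,v)}\lVert P_{n}^{(\alpha,\beta,v)}\rVert\,\widetilde{P}_{n}^{(\alpha,\beta,v)} + \lVert P_{n}^{(\alpha,\beta,v)}\rVert^{-1}A_{n}^{(\alpha,\beta,v)}\lVert P_{n-1}^{(\alpha,\beta,v)}\rVert\,\widetilde{P}_{n-1}^{(\alpha,\beta,v)}.
\end{equation*}
The first two coefficients are exactly the $\widetilde{A}_{n+1}^{(\alpha,\beta,v)}$ and $\widetilde{B}_{n}^{(\alpha,\beta,v)}$ of the statement. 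For the third one, identity \eqref{propA} yields $A_{n}^{(\alpha,\beta,v)} = \lVert P_{n}^{(\alpha,\beta,v)}\rVert^{2}\lVert P_{n-1}^{(\alpha,\beta,v)}\rVert^{-2}$, so, using that the norm factors are Hermitian,
\begin{equation*}
\lVert P_{n}^{(\alpha,\beta,v)}\rVert^{-1}A_{n}^{(\alpha,\beta,v)}\lVert P_{n-1}^{(\alpha,\beta,v)}\rVert = \lVert P_{n}^{(\alpha,\beta,v)}\rVert\,\lVert P_{n-1}^{(\alpha,\beta,v)}\rVert^{-1} = \bigl(\lVert P_{n-1}^{(\alpha,\beta,v)}\rVert^{-1}\lVert P_{n}^{(\alpha,\beta,v)}\rVert\bigr)^{*} = \bigl(\widetilde{A}_{n}^{(\alpha,\beta,v)}\bigr)^{*},
\end{equation*}
completing the derivation of \eqref{ron}.

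For the Hermiticity of $\widetilde{B}_{n}^{(\alpha,\beta,v)}$, I would invoke \eqref{propB}, which gives $B_{n}^{(\alpha,\beta,v)}\lVert P_{n}^{(\alpha,\beta,v)}\rVert^{2} = \lVert P_{n}^{(\alpha,\beta,v)}\rVert^{2}\bigl(B_{n}^{(\alpha,\beta,v)}\bigr)^{*}$, equivalently $\bigl(B_{n}^{(\alpha,\beta,v)}\bigr)^{*} = \lVert P_{n}^{(\alpha,\beta,v)}\rVert^{-2}B_{n}^{(\alpha,\beta,v)}\lVert P_{n}^{(\alpha,\beta,v)}\rVert^{2}$. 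Taking the adjoint of the definition of $\widetilde{B}_{n}^{(\alpha,\beta,v)}$ and substituting this expression yields
\begin{equation*}
\bigl(\widetilde{B}_{n}^{(\alpha,\beta,v)}\bigr)^{*} = \lVert P_{n}^{(\alpha,\beta,v)}\rVert\,\bigl(B_{n}^{(\alpha,\beta,v)}\bigr)^{*}\lVert P_{n}^{(\alpha,\beta,v)}\rVert^{-1} = \lVert P_{n}^{(\alpha,\beta,v)}\rVert^{-1}B_{n}^{(\alpha,\beta,v)}\lVert P_{n}^{(\alpha,\beta,v)}\rVert = \widetilde{B}_{n}^{(\alpha,\beta,v)}.
\end{equation*}

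There is no real obstacle; the only delicate point is fixing the convention that $\lVert P_{n}^{(\alpha,\beta,v)}\rVert$ denotes the Hermitian positive definite square root of the Gram matrix (so that adjoints can be moved freely), after which the proof is pure bookkeeping with \eqref{propA} and \eqref{propB}.
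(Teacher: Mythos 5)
Your proposal is correct and follows essentially the same route as the paper: substitute $P_{n}^{(\alpha,\beta,v)}=\lVert P_{n}^{(\alpha,\beta,v)}\rVert\,\widetilde{P}_{n}^{(\alpha,\beta,v)}$ into \eqref{RR3}, then use \eqref{propA} to identify the coefficient of $\widetilde{P}_{n-1}^{(\alpha,\beta,v)}$ as $(\widetilde{A}_{n}^{(\alpha,\beta,v)})^{*}$ and \eqref{propB} for the Hermiticity of $\widetilde{B}_{n}^{(\alpha,\beta,v)}$. Your explicit remark that $\lVert P_{n}^{(\alpha,\beta,v)}\rVert$ must be taken as the Hermitian positive definite square root of the Gram matrix is a useful clarification that the paper leaves implicit.
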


\begin{proof}
By replacing the identity $\widetilde{P}_{n}^{\left( \alpha ,\beta ,v\right)
}\left( t\right) =\left\Vert
P_{n}^{\left( \alpha ,\beta ,v\right) } \right\Vert ^{-1}P_{n}^{\left( \alpha ,\beta ,v\right) }(t)$ in
the three-term recurrence relation (\ref{RR3}) and using  identity (%
\ref{propA}) we obtain  (\ref{ron}), and by (\ref{propB}) one verifies that \\
$\left(\widetilde{B}^{\left( \alpha ,\beta ,v\right)
	}_{n}\right)^{\ast }=\widetilde{B}^{\left( \alpha ,\beta,v\right) }_{n}$.
\end{proof}

We also have the following Christoffel-Darboux formula for the
sequence of orthonormal polynomials $\left( \widetilde{P}_{n}^{\left( \alpha
,\beta ,v\right) }\right) _{n\geq 0}$:

\begin{eqnarray*}
&&\sum_{k=0}^{n}\left(\widetilde{P}_{k}^{\left( \alpha ,\beta ,v\right) }\right)^{\ast }\left(
y\right) \widetilde{P}_{k}^{\left( \alpha ,\beta ,v\right) }
\left( x\right) = 
\frac{\left(\widetilde{P}_{n}^{\left( \alpha ,\beta ,v\right) }\right)^{\ast }\left( y\right) 
\left(\widetilde{A}_{n+1}^{\left( \alpha ,\beta ,v\right)
}\right)^{\ast } \widetilde{P}_{n+1}^{\left( \alpha ,\beta
,v\right) }\left( x\right)}{x-y}\\
&& \quad \quad \quad \quad \quad \quad \quad \quad -\frac{\left(\widetilde{P}_{n+1}^{\left(
\alpha ,\beta ,v\right) }\right)^{\ast }\left( y\right) \widetilde{A}^{\left( \alpha ,\beta ,v\right)
}_{n+1}
\widetilde{P}_{n}^{\left( \alpha ,\beta ,v\right) }\left(
x\right)}{x-y}.
\end{eqnarray*}%
Hence, the sequence of monic polynomials $\left( P_{n}^{\left( \alpha ,\beta ,v\right) }\right) _{n\geq 0}$ 
satisfies %
\begin{eqnarray*}
&&\sum_{k=0}^{n}\left(P_{k}^{\left( \alpha ,\beta ,v\right) }\right)^{\ast }\left( y\right)
\left\Vert P_{k}^{\left( \alpha ,\beta ,v\right) }
\right\Vert ^{-2}P_{k}^{\left( \alpha ,\beta ,v\right)
}\left( x\right) =\frac{\left(P_{n}^{\left( \alpha ,\beta
,v\right) }\right)^{\ast }\left( y\right) \left\Vert P_{n}^{\left( \alpha ,\beta ,v\right)
} \right\Vert ^{-2} P_{n+1}^{\left( \alpha
,\beta ,v\right) }\left( x\right) }{x-y}\\
&&\quad \quad \quad \quad \quad \quad \quad \quad -\frac{\left(P_{n+1}^{\left(
\alpha ,\beta ,v\right) }\right)^{\ast }\left( y\right) \left\Vert P_{n}^{\left( \alpha
,\beta ,v\right) } \right\Vert ^{-2} P%
_{n}^{\left( \alpha ,\beta ,v\right) }\left( x\right) }{x-y},
\end{eqnarray*}
where the explicit expression of $\left\Vert P_{n}^{\left( \alpha
	,\beta ,v\right) } \right\Vert ^{-2}$ follows from (\ref{norm}).

\section{The Derivatives of the Orthogonal Matrix-Valued Polynomials}\label{las_derivadas}

In this section we prove that polynomials in the sequence of  derivatives of the
orthogonal matrix polynomials $\left( P_{n}^{\left( \alpha ,\beta ,v\right) }\right) _{n \geq 0}$ are
also orthogonal by obtaining a Pearson equation for the weight
matrix $W^{\left( \alpha ,\beta ,v\right) }(t).$

Let  $\d \frac{d^{k}}{dt^{k}}P_{n}^{\left( \alpha ,\beta ,v\right) }\left(
t\right) $ be the derivative of order $k$ of the monic polynomial $P_{n}^{\left( \alpha ,\beta ,v\right)
}(t) $, for $n\geq k$.  Then \begin{equation}\label{Plsderiv}P_{n}^{\left( \alpha ,\beta ,v,k \right)}\left(
t\right)=\d \frac{\left( n-k\right) !}{n!}\frac{d^{k}}{dt^{k}}P_{n}^{\left(
\alpha ,\beta ,v\right) }\left( t\right) \end{equation} are monic polynomials of degree $n-k$ for all $%
n \geq k.$ 

The polynomial $P_{n}^{\left( \alpha ,\beta ,v\right) }\left( t\right) $ is an
eigenfunction of the operator $D^{\left( \alpha ,\beta ,v\right) }$ given above in (\ref{Dalpha_beta})-(\ref{Coef_CUV}).

Taking derivative $k$ times we have that $P_{n}^{\left( \alpha ,\beta ,v,k\right)
}\left( t\right) $ is an eigenfunction of the differential hypergeometric
operator%
\begin{equation}
D^{\left( k\right) }=D^{\left( \alpha ,\beta ,v,k\right) }=\frac{d^{2}}{dt^{2}}t(1-t)%
+\frac{d}{dt}((C^{(k)})^{\ast}-tU^{(k)})-V  ,\label{Dabvk}
\end{equation}%
with 
\begin{equation*}
C^{(k)} =C+kI
,\qquad U^{(k)}=U+2kI=\left( \alpha +\beta +4+2k\right) \mathrm{\ I},
\end{equation*}%
where $C$, $U$ and $V$ are the  matrix entries of the operator $D^{\left( \alpha ,\beta ,v\right)}$ given in (\ref{Coef_CUV}).
One has that  $$P_{n}^{\left( \alpha ,\beta
,v,k\right) }D^{\left( \alpha ,\beta ,v,k\right) }=\Lambda _{n}^{(k)}P_{n}^{\left( \alpha ,\beta ,v,k\right) },\quad n\geq k,$$
where $\Lambda _{n}^{(k)}=\Lambda _{n}+kU+k(k-1)I$, with $\Lambda _{n}$ given in (\ref{LN}). One has in particular, the standard expression for the eigenvalue shown in \cite[Proposition 3.3]{CGPSZ19}, $\Lambda_{n}^{(k)}=-(n-k)(n-k-1)I-(n-k)U^{(k)}-V$. More explicitly, \begin{equation}\label{autov_k}
\Lambda _{n}^{(k)}=%
\begin{pmatrix}
\lambda _{n}^{(k)} & 0 \\ 
0 & \mu _{n}^{(k)}%
\end{pmatrix}%
,\text{ \ \ \ \ \ \ }%
\begin{array}{c}
\lambda _{n}^{(k)}=-(n-k)\left(\alpha +\beta +3+n+k\right) -v, \\ 
\mu _{n}^{(k)}=-(n-k)\left( \alpha +\beta +3+n+k\right). %
\end{array}
\end{equation}


\begin{remark}\label{derik}
	One notices that $D^{\left( \alpha ,\beta
		,v,k\right) }=D^{\left( \alpha +k,\beta +k,v\right) }.$ Thus, the sequence of derivatives are still common eigenfunctions of an hypergeometric operator with diagonal matrix eigenvalues $\Lambda^{(k)}_n$,  with no repetition in their entries. 
	\end{remark}


\begin{proposition} 
	As in (\ref{h21po}), we have the following explicit expression for the sequence of polynomials 
	$%
	\left( P_{n}^{\left( \alpha ,\beta ,v,k\right) }\right) _{n\geq k}$ in terms
	of hypergeometric functions
	\begin{align}
\left(P_{n}^{\left( \alpha ,\beta ,v,k\right) }\left( t\right)\right)^{\ast} &
	=\,_{2}H_{1}\left( { C^{(k)},U^{(k)},V+\lambda _{n}^{(k)}I}%
	;t\right)
	\left( n-k\right) !\left[ C^{(k)},U^{(k)},V+\lambda _{n}^{(k)}I\right]
	_{n-k}^{-1}%
	\begin{pmatrix}
	1 & 0 \\ 
	0 & 0%
	\end{pmatrix}%
	+  \label{QHk} \\ 
	& \,_{2}H_{1}\left( { C^{(k)},U^{(k)},V+\mu _{n}^{(k)}I}
	;t\right)
	\left( n-k\right) !\left[ C^{(k)},U^{(k)},V+\mu _{n}^{(k)}I\right]
	_{n-k}^{-1}%
	\begin{pmatrix}
	0 & 0 \\ 
	0 & 1%
	\end{pmatrix}%
	, \notag
	\end{align}
	where $C^{(k)}$, $U^{(k)}$ and $V$ are the entries of the differential operator in (\ref{Dabvk}) and $\lambda _{n}^{(k)}$ and $\mu_n^{(k)}$ are the diagonal entries of the matrix eigenvalue $\Lambda_n^{(k)}$ given in (\ref{autov_k}).
\end{proposition}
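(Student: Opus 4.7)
The plan is to reduce the claim to formula~(\ref{h21po}) via the parameter-shift identity in Remark~\ref{derik}. Since $D^{(\alpha,\beta,v,k)}=D^{(\alpha+k,\beta+k,v)}$, the polynomial $P_n^{(\alpha,\beta,v,k)}$ is an eigenfunction of the hypergeometric operator $D^{(\alpha+k,\beta+k,v)}$. A direct inspection of (\ref{Coef_CUV}) confirms that the shifted parameters match: the entries of $C^{(k)}=C+kI$ coincide with those of the matrix $C$ attached to the shifted operator, because the off-diagonal entries involve $\kappa_{\pm v,-\beta}=\alpha\pm v-\beta$, which are invariant under $(\alpha,\beta)\mapsto(\alpha+k,\beta+k)$; meanwhile $U^{(k)}=(\alpha+\beta+2k+4)I$ agrees with the shifted $U$, and $V$ is independent of $(\alpha,\beta)$. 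Likewise, substituting $(\alpha,\beta,n)\mapsto(\alpha+k,\beta+k,n-k)$ in (\ref{LN}) reproduces $\lambda_n^{(k)}$ and $\mu_n^{(k)}$ of (\ref{autov_k}), so $\Lambda_n^{(k)}$ equals the eigenvalue attached to the $(n-k)$-th monic polynomial of the shifted family.

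Next, I would invoke uniqueness of monic matrix polynomial eigenfunctions. By definition (\ref{Plsderiv}), $P_n^{(\alpha,\beta,v,k)}$ is monic of degree $n-k$, and so is $P_{n-k}^{(\alpha+k,\beta+k,v)}$; both are annihilated by the same operator $D^{(\alpha+k,\beta+k,v)}-\Lambda_n^{(k)}\cdot I$. The standing hypothesis $|\alpha-\beta|<|v|<\alpha+\beta+2$ is preserved under the shift, so the two diagonal entries of $\Lambda_n^{(k)}$ remain distinct (they differ by $v$) and, more generally, the sequence of eigenvalues of the shifted family has no repeated entries. By the uniqueness statement available in this hypergeometric setting, the two polynomials coincide, and the desired formula (\ref{QHk}) follows at once by applying (\ref{h21po}) to the shifted data $(\alpha,\beta,n)\mapsto(\alpha+k,\beta+k,n-k)$.

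The main obstacle is the uniqueness assertion used in the second step. If it cannot be cited directly from~\cite{CGPSZ19}, I would establish it via the ansatz $P(t)=t^{n-k}I+\sum_{j=0}^{n-k-1}P_j t^j$: substitution into the hypergeometric eigenvalue equation yields a triangular recursion for the $P_j$, each step being a Sylvester-type linear equation whose solvability is guaranteed precisely because the relevant eigenvalue matrices have pairwise distinct diagonal entries. This argument in fact rederives the coefficient recursion (\ref{corhyp}) for $_2H_1$ column by column, and pins down the particular normalization $(n-k)!\,[C^{(k)},U^{(k)},V+\lambda_n^{(k)}I]_{n-k}^{-1}$ (respectively with $\mu_n^{(k)}$ in place of $\lambda_n^{(k)}$) that appears in (\ref{QHk}).
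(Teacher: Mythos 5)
Your proof is correct, but it follows a genuinely different route from the paper's. The paper works directly with the operator $D^{(k)}$ in (\ref{Dabvk}): since the eigenvalue $\Lambda_n^{(k)}$ is diagonal, the matrix eigenvalue equation splits into two vector-valued hypergeometric equations in the sense of \cite{T03}; as the eigenvalues $\alpha+k+1$ and $\alpha+k+3$ of $C^{(k)}$ are not non-positive integers, the columns of $\bigl(P_n^{(\alpha,\beta,v,k)}\bigr)^{\ast}$ must be of the form ${}_2H_1\left(C^{(k)},U^{(k)},V+\lambda_n^{(k)}I;t\right)v_0$; the series terminates at degree $n-k$ because the relevant column of $\left[C^{(k)},U^{(k)},V+\lambda_n^{(k)}I\right]_{n-k+1}$ vanishes (see (\ref{corhyp})); and matching leading coefficients fixes $v_0$, using that the brackets $[\,\cdot\,]_{n-k}$ are invertible because $\lambda_q^{(k)},\mu_q^{(k)}$ are pairwise distinct across degrees. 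You instead reduce to the $k=0$ formula (\ref{h21po}) via the shift $D^{(\alpha,\beta,v,k)}=D^{(\alpha+k,\beta+k,v)}$ of Remark \ref{derik} and the identification $P_n^{(\alpha,\beta,v,k)}=P_{n-k}^{(\alpha+k,\beta+k,v)}$, justified by uniqueness of the monic degree-$(n-k)$ polynomial eigenfunction. That uniqueness step is the real content of your argument, and your Sylvester-equation recursion for it is sound: $P_j$ is determined from $P_{j+1}$ provided $\sigma\bigl(\Lambda_n^{(k)}\bigr)\cap\sigma\bigl(-(j(j-1)I+jU^{(k)}+V)\bigr)=\emptyset$ for $j<n-k$, which is exactly the same cross-degree distinctness condition ($\lambda_q^{(k)}\neq\lambda_\ell^{(k)}$, $\lambda_q^{(k)}\neq\mu_\ell^{(k)}$, $\mu_q^{(k)}\neq\mu_\ell^{(k)}$ for $q\neq\ell$) the paper invokes for invertibility of the brackets; note that what matters there is \emph{unique} solvability (existence is already known), and that the condition you must verify is this cross-degree one, e.g.\ $|\mu_q^{(k)}-\mu_\ell^{(k)}|\geq\alpha+\beta+2k+4>|v|$, not merely that the two entries of a single $\Lambda_n^{(k)}$ differ by $v$ (you do state the general condition, so this is only a matter of making the one-line check explicit). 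Importantly, you cannot shortcut the identification by appealing to orthogonality of the derivatives with respect to $W^{(k)}=W^{(\alpha+k,\beta+k,v)}$, since that is established only later (Proposition \ref{WKDK}, Theorem \ref{Pearson}); your eigenfunction-uniqueness route avoids this circularity. In the end both proofs rest on the same non-degeneracy of the eigenvalues; yours is shorter because it recycles (\ref{h21po}) and makes the shift structure of Remark \ref{derik} do the work, while the paper's is self-contained within Tirao's framework and rederives the hypergeometric representation from scratch.
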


We include the proof for completeness.

\medskip

\begin{proof}
	Indeed,  the polynomials $\left(P_{n}^{\left( \alpha ,\beta ,v,k\right) }\right)_{n\geq k}$ are common eigenfuntions of the  matrix hypergeometric type operator  
	(\ref{Dabvk}) with diagonal eigenvalue $\Lambda_{n}^{(k)}$. 
	
	The fact that the eigenvalue is diagonal implies that the matrix equation can be written as two vectorial hypergeometric equations as in \cite[Theorem 5]{T03} and the solutions of these equations are the columns of $\left(P_{n}^{\left( \alpha ,\beta ,v,k\right) }\right)_{n\geq k}$.  
	Since the eigenvalues of the matrices $C^{(k)}$, $3+\alpha+k$ and $1+\alpha+k$, are non negative integers for all $k\geq 1$, then these solutions are hypergeometric vector functions. 
	
	Moreover, the vectorial functions are polynomials of degree $n-k$ since the form of the factor 
	$\left((n-k)(n-k-1)I+(n-k)U^{(k)}+V+\lambda _{n}^{(k)}I\right)=-\Lambda_n^{(k)}+\lambda _{n}^{(k)}I$ appearing in the expression of $\left[C^{(k)},U^{(k)},V+\lambda _{n}^{(k)}I\right] _{n-k+1}$ (see \ref{corhyp}), makes its first column equal to zero. Analogously for the second column  of 	$\left[C^{(k)},U^{(k)},V+\mu _{n}^{(k)}I\right] _{n-k+1}$.
	
	The matrices $\left[
	C^{(k)},U^{(k)},V+\mu _{n}^{(k)}I\right] _{n-k}$ and $\left[ C^{(k)},U^{(k)},V+\lambda _{n}^{(k)}I\right] _{n-k}$ are non-singular, since $\lambda	_{q}^{(k)}\neq \mu _{\ell }^{(k)}$, $\lambda _{q}^{(k)}\neq \lambda
	_{\ell }^{(k)}$ and $\mu _{q}^{(k)}\neq \mu _{\ell }^{(k)}$ for all $q\neq \ell $.

\end{proof}

\begin{proposition}
\label{WKDK} Let $\alpha ,\beta >-(k+1)$ and $|\alpha -\beta |<|v|<\alpha
+\beta +2\left( k+1\right) $. We write
\begin{equation}
W^{(k)}(t)=W^{\left( \alpha ,\beta ,v,k\right) }(t)=t^{\alpha +k}\left( 1-t\right) ^{\beta +k} \widetilde{W}^{(\alpha, \beta, v, k)}\left(
t\right), \textrm{where}\ \small{\widetilde{W}^{\left( \alpha ,\beta ,v,k\right) }\left( t\right)
=W_{2}^{(k)}t^{2}+W_{1}^{(k)}t+W_{0}^{(k)}}, \label{wk}
\end{equation}%
with%

\begin{eqnarray*}
W_{2}^{(k)} &=&%
v\begin{pmatrix}
\dfrac{\kappa_{v,\beta}+2\left( k+1\right) }{\kappa_{v,-\beta} } & 0 \\ 
0 & -\dfrac{ \kappa_{-v,\beta}+2( k+1)}{\kappa_{-v,-\beta} }%
\end{pmatrix}%
,\text{ \ }W_{0}^{(k)}=(\alpha +k+1)%
\begin{pmatrix}
1 & -1 \\ 
-1 & 1.%
\end{pmatrix},
\\
W_{1}^{(k)} &=&%
\begin{pmatrix}
- \kappa_{v,\beta}  & \alpha +\beta  \\ 
\alpha +\beta& - \kappa_{-v,\beta}
\end{pmatrix}+2\left( k+1\right)\begin{pmatrix}
1 & 1  \\ 
1& 1
\end{pmatrix}.
\end{eqnarray*}%
Then $W^{(k)}$ is an irreducible weight matrix and the differential
hypergeometric operator $D^{\left( k\right) }$ in (\ref{Dabvk}
) is symmetric with respect to the weight matrix $W^{(k)}$. Moreover, it holds  that $W^{\left( k\right) }(t)=W^{\left( \alpha +k,\beta +k,v\right) }(t)$.
\end{proposition}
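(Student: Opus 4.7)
The plan is to reduce the whole statement to the original construction in \cite{CGPSZ19} via a parameter shift, using the already-recorded identity of differential operators in Remark \ref{derik}. Specifically, once I verify the weight-level identity $W^{(\alpha,\beta,v,k)}(t)=W^{(\alpha+k,\beta+k,v)}(t)$, the three conclusions (irreducibility of $W^{(k)}$, symmetry of $D^{(k)}$ with respect to $W^{(k)}$, and $W^{(k)}=W^{(\alpha+k,\beta+k,v)}$) all follow at once from the corresponding facts about the unshifted family.

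First I would check the parameter hypotheses are exactly the admissibility conditions of the original family after shifting. The constraints $\alpha,\beta>-(k+1)$ and $|\alpha-\beta|<|v|<\alpha+\beta+2(k+1)$ rewrite as $\alpha+k,\beta+k>-1$ and $|(\alpha+k)-(\beta+k)|<|v|<(\alpha+k)+(\beta+k)+2$, which are precisely the hypotheses required for $W^{(\alpha+k,\beta+k,v)}$ to be a well-defined irreducible weight in the sense of \cite{CGPSZ19}.

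Next I would establish the identification $\widetilde{W}^{(\alpha,\beta,v,k)}=\widetilde{W}^{(\alpha+k,\beta+k,v)}$ by entry-wise comparison. The key observation is that under the shift $\alpha\mapsto\alpha+k$, $\beta\mapsto\beta+k$, the symbols transform as $\kappa_{v,\beta}\mapsto\kappa_{v,\beta}+2k$ and $\kappa_{-v,\beta}\mapsto\kappa_{-v,\beta}+2k$, whereas $\kappa_{v,-\beta}$ and $\kappa_{-v,-\beta}$ are invariant (the $k$'s cancel). Substituting these into (\ref{Wtilde}) with the shifted parameters and collecting the coefficients of $t^2, t^1, t^0$ yields precisely the matrices $W_2^{(k)}, W_1^{(k)}, W_0^{(k)}$ written in the statement; this is a routine algebraic verification, and the factor $t^{\alpha+k}(1-t)^{\beta+k}$ in $W^{(k)}$ is obviously what one obtains by pulling $t^{\alpha}(1-t)^{\beta}$ out of $W^{(\alpha+k,\beta+k,v)}$.

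Once this identification is in place, the remaining assertions are automatic. Irreducibility of $W^{(k)}$ is the irreducibility of $W^{(\alpha+k,\beta+k,v)}$ established in \cite{CGPSZ19}. By Remark \ref{derik}, $D^{(k)}=D^{(\alpha+k,\beta+k,v)}$, which is symmetric with respect to $W^{(\alpha+k,\beta+k,v)}=W^{(k)}$ by the symmetry statement of \cite{CGPSZ19} applied at the shifted parameters; equivalently, one may invoke the conditions (\ref{EDS1})--(\ref{CBorde}) directly, noting that the boundary terms $\lim_{t\to 0,1}F_2(t)W^{(k)}(t)=0$ hold because of the improved exponents $\alpha+k,\beta+k$. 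The only piece that requires actual arithmetic is the entry-by-entry match in the previous paragraph; I expect this to be the main (and essentially only) obstacle, and it is purely bookkeeping.
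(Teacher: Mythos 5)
Your argument is correct and coincides with the paper's own proof: both reduce everything to the identity $W^{(k)}(t)=W^{(\alpha+k,\beta+k,v)}(t)$, so that, via Remark \ref{derik}, the symmetry of $D^{(k)}$ and the irreducibility of $W^{(k)}$ follow from Proposition 4.1 of \cite{CGPSZ19} applied at the shifted parameters, whose admissibility conditions are exactly the stated hypotheses $\alpha+k,\beta+k>-1$ and $|(\alpha+k)-(\beta+k)|<|v|<(\alpha+k)+(\beta+k)+2$. One minor remark on your entry-wise check: the shift sends the diagonal linear terms $-(\kappa_{\pm v,\beta}+2)$ of $\widetilde{W}^{(\alpha,\beta,v)}$ to $-(\kappa_{\pm v,\beta}+2(k+1))$, so the $+2(k+1)$ appearing in the diagonal entries of $W_1^{(k)}$ as printed in the proposition should be $-2(k+1)$ --- a typo in the statement rather than an obstruction to your argument.
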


\begin{proof}
	
Taking into account Remark \ref{derik} and the fact that $W^{\left( k\right) }(t)=W^{\left( \alpha +k,\beta +k,v\right) }(t)$,
from Proposition 4.1 in \cite{CGPSZ19} one has that $D^{\left( \alpha +k,\beta
+k,v\right) }$ is symmetric with respect to $W^{\left( \alpha +k,\beta
+k,v\right) }$ and $W^{\left( \alpha +k,\beta +k,v\right) }$ is an
irreducible weight matrix if and only if $\alpha +k$ and $\beta +k$ satisfy $%
\alpha +k>-1$, $\beta +k>-1$ and
$|\left( \alpha +k\right) -\left( \beta
+k\right) |<|v|<\left( \alpha +k\right) +\left( \beta +k\right) +2.$
\end{proof}

We will use the following Pearson equation to prove that the sequence of polynomials {\small$%
\left(P_{n}^{\left( \alpha ,\beta ,v,k\right)}\right)_{n\geq k} $} is orthogonal with respect to $%
W^{(k)}$.

\begin{theorem}
\label{Pearson}The weight matrix $W^{(k)}$ satisfies the following Pearson
equation,%
\begin{eqnarray}  \label{PE}
&&\left( W^{(k)}\left( t\right) \Phi ^{(k)}\left( t\right) \right) ^{\prime }= W^{(k)}\left( t\right)\Psi
^{(k)}\left( t\right) ,\text{ }k\in 
\mathbb{N},
\text{ \ \ with \ \ }\\ \notag
&&\Phi ^{(k)}\left( t\right)
=\mathscr{A}_{2}^{k}t^{2}+\mathscr{A}_{1}^{k}t+\mathscr{A}_{0}^{k}\text{ and }\Psi ^{(k)}\left( t\right)
=\mathscr{B}_{1}^{k}t+\mathscr{B}_{0}^{k},\notag
\end{eqnarray}
where%
\begin{align}
\label{A2}\mathscr{A}_{2}^{k} &=
\begin{pmatrix}
-\dfrac{\kappa_{v,\beta} +2(k+2)}{\kappa_{v,\beta} +2(k+1)} & 0 \\ 
0 & -\dfrac{\kappa_{-v,\beta} +2(k+2)}{\kappa_{-v,\beta}+2(k+1)}%
\end{pmatrix},
\\
\label{A1} \text{ }\mathscr{A}_{1}^{k} &= 
\frac{2}{(\kappa_{-v,\beta}+2(k+1))(\kappa_{v,\beta}+2(k+1))}\begin{pmatrix}
0 & \kappa_{v,-\beta}\\ \kappa_{-v,-\beta} & 0
\end{pmatrix}-\mathscr{A}_{2}^{k}, 
\\ \nonumber &
\\ \label{A0}
\mathscr{A}_{0}^{k} &=\frac{\kappa_{v,-\beta}\kappa_{-v,-\beta}}{v(\kappa_{-v,\beta}+2(k+1))(\kappa_{v,\beta}+2(k+1))}%
\begin{pmatrix}
-1 & 1 \\ 
-1 & 1%
\end{pmatrix},\\
\label{idBk}
\mathscr{B}_{1}^{k}& =(\alpha +\beta +4+2k)\mathscr{A}_{2}^{k} ,
\\
\label{idB0}
\mathscr{B}_{0}^{k} &=
\left( -(\alpha+k+1)I-\dfrac{1}{v}\begin{pmatrix} 
-\kappa_{-v,-\beta}&0\\ 0&\kappa_{v,-\beta}
\end{pmatrix} \right)\mathscr{A}_{2}^{k}\\
\nonumber &+\dfrac{1}{2v}\left(\dfrac{\alpha+\beta+2k+4}{v}
\mathscr{A}_{1}^{k}+\mathscr{B}_{1}^{k}\right)
\begin{pmatrix} 
-\kappa_{-v,\beta}-2(k+1)&0\\ 0&\kappa_{v,\beta}+2(k+1)
\end{pmatrix}.
\end{align}
\end{theorem}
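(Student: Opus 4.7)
The proof plan is to reduce the matrix Pearson equation to a single polynomial identity in~$t$ and verify it by matching coefficients of the powers $t^0,\dots,t^5$.

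Using the factorization $W^{(k)}(t)=t^{\alpha+k}(1-t)^{\beta+k}\widetilde{W}^{(k)}(t)$ from Proposition~\ref{WKDK} and the product rule, $(W^{(k)}\Phi^{(k)})'$ splits into a piece containing the differentiated scalar factor $t^{\alpha+k-1}(1-t)^{\beta+k-1}\bigl[(\alpha+k)(1-t)-(\beta+k)t\bigr]$ times $\widetilde{W}^{(k)}\Phi^{(k)}$, plus $t^{\alpha+k}(1-t)^{\beta+k}$ times $(\widetilde{W}^{(k)}\Phi^{(k)})'$. Comparing with $W^{(k)}\Psi^{(k)}=t^{\alpha+k}(1-t)^{\beta+k}\widetilde{W}^{(k)}\Psi^{(k)}$ and cancelling the common positive factor $t^{\alpha+k-1}(1-t)^{\beta+k-1}$, equation~(\ref{PE}) is equivalent to the matrix polynomial identity
\[
\bigl[(\alpha+k)(1-t)-(\beta+k)t\bigr]\widetilde{W}^{(k)}(t)\Phi^{(k)}(t)+t(1-t)\bigl(\widetilde{W}^{(k)}(t)\Phi^{(k)}(t)\bigr)'
=t(1-t)\widetilde{W}^{(k)}(t)\Psi^{(k)}(t),
\]
a relation between matrix polynomials of degree~$5$ in~$t$.

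Next, I expand each side in the basis $\{1,t,t^2,t^3,t^4,t^5\}$. Writing $\widetilde{W}^{(k)}\Phi^{(k)}=\sum_{j=0}^{4}g_j t^j$ with $g_j$ expressible through $W_i^{(k)}$ and $\mathscr{A}_i^k$, the left-hand side becomes
\[
(\alpha+k)g_0+\sum_{j=1}^{4}\Bigl[(\alpha+k+j)g_j-(\alpha+\beta+2k+j-1)g_{j-1}\Bigr]t^j-(\alpha+\beta+2k+4)g_4\,t^5.
\]
The two extreme coefficients are the cleanest: matching $t^0$ gives $W_0^{(k)}\mathscr{A}_0^k=0$, which follows directly from the rank-one structure of $W_0^{(k)}$ and $\mathscr{A}_0^k$ in (\ref{A0}); matching $t^5$ gives, after cancelling the invertible diagonal matrix $W_2^{(k)}$, exactly the identity $\mathscr{B}_1^k=(\alpha+\beta+2k+4)\mathscr{A}_2^k$ of (\ref{idBk}). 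The four remaining identities (coefficients of $t^1,t^2,t^3,t^4$) form an overdetermined linear system in $\mathscr{A}_1^k,\mathscr{A}_2^k,\mathscr{B}_0^k$; three of them are algebraic consequences of the explicit forms (\ref{A2})--(\ref{A0}), via the elementary identities $\kappa_{v,-\beta}+\kappa_{-v,-\beta}=2(\alpha-\beta)$, $\kappa_{v,-\beta}\kappa_{-v,-\beta}=(\alpha-\beta)^2-v^2$, and their $\beta\mapsto-\beta$ analogues, while the fourth pins down $\mathscr{B}_0^k$ and, after simplification using $\mathscr{B}_1^k=(\alpha+\beta+2k+4)\mathscr{A}_2^k$, reproduces formula~(\ref{idB0}).

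The main obstacle is purely computational: each of the six identities is a $2\times 2$ matrix equation whose entries are rational expressions in $\alpha,\beta,v,k$ with common denominators of the form $v(\kappa_{\pm v,\beta}+2(k+1))(\kappa_{\mp v,\beta}+2(k+1))$, and the cancellations are delicate. A useful bookkeeping device is to split every $2\times 2$ coefficient into its diagonal and antidiagonal parts, which is possible because $W_2^{(k)}$, $\mathscr{A}_2^k$ are diagonal while $\mathscr{A}_0^k$ and the $\mathscr{A}_1^k+\mathscr{A}_2^k$ combination have a rigid rank-one structure; this decouples the system and reduces the verification of each coefficient to a pair of scalar identities in the $\kappa_{\pm v,\pm\beta}$, which can then be checked directly.
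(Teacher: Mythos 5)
Your proposal is correct and follows essentially the same route as the paper: both cancel the common factor $t^{\alpha+k-1}(1-t)^{\beta+k-1}$ to reduce the Pearson equation to the vanishing of a degree-five matrix polynomial, dispose of the extreme coefficients via $W_0^{(k)}\mathscr{A}_0^k=0$ and the relation $\mathscr{B}_1^k=(\alpha+\beta+2k+4)\mathscr{A}_2^k$, and verify the four remaining coefficient identities by direct computation. The only difference is cosmetic bookkeeping (you apply the product rule to the scalar factor times $\widetilde{W}^{(k)}\Phi^{(k)}$, the paper differentiates $W^{(k)}$ wholesale), which yields the same system of identities.
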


\begin{proof}
By replacing the expression of $\Phi ^{(k)}\left( t\right) $ and $\Psi
^{(k)}\left( t\right) $ in (\ref{PE}) and taking derivative we obtain 
\begin{equation}
\left( W^{(k)}\left(
t\right) \right) ^{\prime }\left( \mathscr{A}_{2}^{k}t^{2}+\mathscr{A}_{1}^{k}t+\mathscr{A}_{0}^{k}\right) -W^{(k)}\left( t\right)\left( \left( \mathscr{B}_{1}^{k}-2\mathscr{A}_{2}^{k}\right)
t+\mathscr{B}_{0}^{k}-\mathscr{A}_{1}^{k}\right) =0.  \label{pek}
\end{equation}%
The derivative of $W^{(k)}\left( t\right) $ is
\begin{eqnarray*}
&&\left( W^{(k)}\left( t\right) \right) ^{\prime } =-t^{\alpha +k-1}\left(
1-t\right) ^{\beta +k-1} \left( \alpha +\beta +2k+2\right) 
W_{2}^{(k)}t^{3} + \left( \alpha +k\right) W_{0}^{(k)}
\\
&&+\left[ \left( \alpha +k+2\right) W_{2}^{(k)}-\left( \alpha +\beta
+2k+1\right) W_{1}^{(k)}\right] t^{2}
 + \left[\left( \alpha +k+1\right) W_{1}^{(k)}-\left( \alpha
+\beta +2k\right) W_{0}^{(k)}\right] t .
\end{eqnarray*}%
Hence, the left hand side of (\ref{pek}) is a product between a polynomial of
degree five and\\
 $t^{\left( \alpha +k-1\right) }\left( 1-t\right) ^{\left(
\beta +k-1\right) }.$ Therefore, equating to zero the entries of this polynomial, taking into account (\ref{idBk}) and the equality $W_{0}^{(k)} \mathscr{A}_{0}^{k}=0$, it only remains to verify the  identities below, which follow immediately  by straightforward computations. 
\begin{eqnarray*}
\left( \alpha +k+4\right) W_{2}^{(k)}\mathscr{A}_{2}^{k}-\left( \alpha +\beta +2k+3\right)\left(W_{2}^{(k)}
\mathscr{A}_{1}^{k}
+W_{1}^{(k)}\mathscr{A}_{2}^{k}\right)&&\\
+ W_{2}^{(k)}\left(\mathscr{B}_{0}^{k}-\mathscr{B}_{1}^{k}\right)+W_{1}^{(k)}\mathscr{B}_{1}^{k}
&=&0,%
\\
\left( \alpha +k+3\right)\left(W_{2}^{(k)} \mathscr{A}_{1}^{k}+ W_{1}^{(k)}\mathscr{A}_{2}^{k}\right)-\left( \alpha +\beta +2k+2\right)
\left(W_{2}^{(k)}\mathscr{A}_{0}^{k}+W_{1}^{(k)}\mathscr{A}_{1}^{k}\right)
-W_{2}^{(k)}\mathscr{B}_{0}^{k}&&\\+W_{1}^{(k)}\left( \mathscr{B}_{0}^{k}-\mathscr{B}_{1}^{k}\right)
+2W_{0}^{(k)} \mathscr{A}_{2}^{k}
&=&0,
\\
\left( \alpha +k+2\right) \left(W_{2}^{(k)}\mathscr{A}_{0}^{k}+W_{1}^{(k)}  \mathscr{A}_{1}^{k}+W_{0}^{(k)} \mathscr{A}_{2}^{k}\right)-\left( \alpha +\beta +2k+1\right)\left( W_{1}^{(k)}
\mathscr{A}_{0}^{k}+W_{0}^{(k)}\mathscr{A}_{1}^{k}\right)&&\\ 
- W_{1}^{(k)}\mathscr{B}_{0}^{k}+
W_{0}^{(k)}\left( \mathscr{B}_{0}^{k}-\mathscr{B}_{1}^{k}\right) &=&0,%
\\
\left( \alpha +k+1\right)\left( W_{1}^{(k)}\mathscr{A}_{0}^{k}+W_{0}^{(k)} \mathscr{A}_{1}^{k}\right)-W_{0}^{(k)}\mathscr{B}_{0}^{k} &=&0.%
\\
\end{eqnarray*}%

\end{proof}

\begin{remark}\label{losW^k}
Let us consider the matrix-valued functions $W^{\left( \alpha ,\beta ,v,k\right) }\left( t\right)=W^{(k)}(t)$, $\Phi ^{(k)}(t)$ and $\Psi ^{(k)}(t)$, $k\in \mathbb{N}$, defined in (\ref{wk}) and Theorem \ref{Pearson} respectively. Then, by straightforward computations, one can verify the
following identities:

\begin{equation}\label{IdW^k1}
W^{\left( \alpha ,\beta ,v,k+1\right) }\left( t\right) = W^{\left( \alpha ,\beta ,v,k\right) }\left( t\right)\Phi
^{(k)}\left( t\right) ,
\end{equation}

 \begin{equation}\label{IdW^k2}\left( W^{\left( \alpha ,\beta ,v,k+1\right) }\left( t\right)
\right) ^{\prime }= W^{\left( \alpha ,\beta
,v,k\right) }\left( t\right)\Psi ^{(k)}\left( t\right) ,
\end{equation}
 

\end{remark}

\medskip

Taking into account that $\deg \left(
\Phi ^{(k)}(t)\right) =2$ and $\deg \left( \Psi ^{(k)}(t)\right) =1$, we obtain  from \cite[Corollary 3.10]{CMV07} the following

\begin{corollary}
The sequence of polynomials $\left( P_{n}^{\left( \alpha ,\beta ,v,k\right)
}\right) _{n\geq k}$ is orthogonal with respect to the weight matrix
$W^{(k)} =W^{\left( \alpha ,\beta ,v,k-1\right) }\left(
t\right)\Phi ^{(k-1)}\left( t\right) .$
\end{corollary}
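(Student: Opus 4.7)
The plan is to combine the Pearson equation established in Proposition \ref{Pearson} with the identification of weight matrices in Remark \ref{losW^k}, and to invoke \cite[Corollary 3.10]{CMV07} inductively on $k$.

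For the base case $k=0$, the assertion reduces to the orthogonality of $(P_n^{(\alpha,\beta,v)})_{n\geq 0}$ with respect to $W^{(0)}=W^{(\alpha,\beta,v)}$, which is the starting point inherited from \cite{CGPSZ19}. For the inductive step, I would assume that $(P_n^{(\alpha,\beta,v,k-1)})_{n\geq k-1}$ is orthogonal with respect to $W^{(k-1)}$, and observe that by (\ref{Plsderiv}) the derivative of $P_n^{(\alpha,\beta,v,k-1)}$ differs from $P_n^{(\alpha,\beta,v,k)}$ only by the nonzero scalar factor $n-k+1$. Thus the problem of orthogonality for $(P_n^{(\alpha,\beta,v,k)})_{n\geq k}$ reduces to showing that the derivatives of an orthogonal sequence for $W^{(k-1)}$ are orthogonal with respect to the correct successor weight.

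The Pearson equation of Proposition \ref{Pearson}, written at level $k-1$, reads
\[
(W^{(k-1)}\Phi^{(k-1)})' = W^{(k-1)}\Psi^{(k-1)},
\]
with $\deg\Phi^{(k-1)}\leq 2$ and $\deg\Psi^{(k-1)}\leq 1$. Using identity (\ref{IdW^k1}) in Remark \ref{losW^k}, one has $W^{(k-1)}\Phi^{(k-1)}=W^{(k)}$, and Proposition \ref{WKDK} ensures that $W^{(k)}$ is itself an irreducible positive definite weight matrix under the assumed parameter constraints $\alpha,\beta>-(k+1)$ and $|\alpha-\beta|<|v|<\alpha+\beta+2(k+1)$. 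These are exactly the hypotheses required by \cite[Corollary 3.10]{CMV07}, which states that under a Pearson-type equation with matrix polynomial coefficients of the stated degrees (together with appropriate boundary behaviour), the sequence of derivatives of orthogonal polynomials with respect to $W$ is orthogonal with respect to $W\Phi$. Applying it once yields the inductive step, and iterating gives the full statement.

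The main obstacle I anticipate is verifying the boundary conditions at $t=0$ and $t=1$ that underpin the integration by parts behind \cite[Corollary 3.10]{CMV07}. However, because $W^{(k-1)}(t)=t^{\alpha+k-1}(1-t)^{\beta+k-1}\widetilde{W}^{(k-1)}(t)$ with $\widetilde{W}^{(k-1)}$ a matrix polynomial, and $\Phi^{(k-1)}(t)$ is a matrix polynomial of degree at most $2$, the required limits $\lim_{t\to 0,1} W^{(k-1)}(t)\Phi^{(k-1)}(t)=0$ follow at once from $\alpha+k-1>-1$ and $\beta+k-1>-1$, which are part of the hypotheses of Proposition \ref{WKDK}. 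This is the only nontrivial verification, and it is routine given the explicit form of all matrix coefficients; everything else is a direct application of Proposition \ref{Pearson}, Remark \ref{losW^k}, and the cited corollary.
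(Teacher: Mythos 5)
Your proposal is correct and follows essentially the same route as the paper: the paper likewise derives the statement directly from the Pearson equation of Theorem \ref{Pearson} together with the identities of Remark \ref{losW^k}, citing \cite[Corollary 3.10]{CMV07} on the grounds that $\deg \Phi^{(k)}=2$ and $\deg \Psi^{(k)}=1$. The only difference is that you make explicit the induction on $k$ and the boundary-condition check that the paper leaves implicit, both of which are sound.
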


The following results are obtained in a similar way than in Theorem \ref%
{Rodrigues} and Corollary \ref{RodriguesJacobi}.

\begin{proposition}
Let $W^{(k)}(t)$ be defined as in (\ref{wk}). A Rodrigues formula for the sequence of
polynomials $\left( P_{n}^{\left( \alpha ,\beta ,v,k\right) }\right) _{n\geq
k}$ is%
\begin{equation*}
 P_{n}^{\left( \alpha ,\beta ,v,k\right) }(t)=\left(
R_{n}^{\left( \alpha ,\beta ,v,k\right) }\left( t\right) \right) ^{\left(
n-k\right) }\left(W^{(k)}\left( t\right)\right)^{-1} ,\ \textrm{where}\text{ \ \ }R_{n}^{\left( \alpha ,\beta
,v,k\right) }\left( t\right) =R_{n-k}^{\left( \alpha +k,\beta +k,v\right)
}\left( t\right).
\end{equation*}
\end{proposition}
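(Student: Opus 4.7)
The plan is to reduce this statement to a direct application of Theorem~\ref{Rodrigues} by identifying the sequence of derivatives $\bigl(P_{n}^{(\alpha,\beta,v,k)}\bigr)_{n\geq k}$ with a sequence of monic orthogonal polynomials for a shifted set of parameters. The two essential ingredients for this identification are already recorded in the preceding material: Remark~\ref{derik} and Proposition~\ref{WKDK} together say that the derivative operator and weight match the hypergeometric data with parameters $(\alpha+k,\beta+k,v)$, namely $D^{(\alpha,\beta,v,k)}=D^{(\alpha+k,\beta+k,v)}$ and $W^{(k)}(t)=W^{(\alpha+k,\beta+k,v)}(t)$.

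First I would verify that $P_{n}^{(\alpha,\beta,v,k)}$ is a sequence of monic orthogonal polynomials with respect to $W^{(k)}$. Monicity and the degree being $n-k$ follow directly from the definition (\ref{Plsderiv}). Orthogonality with respect to $W^{(k)}$ is exactly the content of the Corollary stated right after Theorem~\ref{Pearson}, applied iteratively to express $W^{(k)}$ via the Pearson equation. By the uniqueness of the monic orthogonal polynomial sequence for a given weight matrix (stated in the Preliminaries), this forces the identification
\[
P_{n}^{(\alpha,\beta,v,k)}(t)=P_{n-k}^{(\alpha+k,\beta+k,v)}(t),\qquad n\geq k.
\]

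Next I would apply Theorem~\ref{Rodrigues} with parameters $(\alpha+k,\beta+k,v)$ and index $m=n-k$. Since the hypotheses $\alpha+k,\beta+k>-1$ and $|(\alpha+k)-(\beta+k)|=|\alpha-\beta|<|v|<\alpha+\beta+2(k+1)=(\alpha+k)+(\beta+k)+2$ are precisely those imposed in Proposition~\ref{WKDK} guaranteeing that $W^{(\alpha+k,\beta+k,v)}$ is an irreducible weight, Theorem~\ref{Rodrigues} yields
\[
P_{n-k}^{(\alpha+k,\beta+k,v)}(t)=\bigl(R_{n-k}^{(\alpha+k,\beta+k,v)}(t)\bigr)^{(n-k)}\bigl(W^{(\alpha+k,\beta+k,v)}(t)\bigr)^{-1}.
\]
Combining the last two displayed equalities with the identification $W^{(k)}=W^{(\alpha+k,\beta+k,v)}$ and the definition $R_{n}^{(\alpha,\beta,v,k)}(t):=R_{n-k}^{(\alpha+k,\beta+k,v)}(t)$ gives the claimed Rodrigues formula.

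There is no real obstacle in this proof: the heavy lifting has been done in Theorem~\ref{Rodrigues}, Proposition~\ref{WKDK} and the Pearson equation of Theorem~\ref{Pearson}. The only point requiring a bit of care is making sure the choice of normalizing constants $c_{n-k}$, $d_{n-k}$ (now indexed with the shifted parameters) in (\ref{condicion monico}) indeed matches the monic normalization of $P_{n}^{(\alpha,\beta,v,k)}$; this is automatic once monicity of the derivative sequence is checked, since monic orthogonal polynomials are unique.
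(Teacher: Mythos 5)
Your proof is correct and follows essentially the route the paper intends: the paper only remarks that this result ``is obtained in a similar way'' as Theorem~\ref{Rodrigues}, i.e., by transporting everything to the shifted parameters $(\alpha+k,\beta+k,v)$ via the identifications $W^{(k)}=W^{(\alpha+k,\beta+k,v)}$ and $D^{(\alpha,\beta,v,k)}=D^{(\alpha+k,\beta+k,v)}$, which is exactly your reduction. Your explicit appeal to the uniqueness of the monic orthogonal sequence (together with the orthogonality of the derivatives coming from the Pearson equation) to identify $P_{n}^{(\alpha,\beta,v,k)}$ with $P_{n-k}^{(\alpha+k,\beta+k,v)}$ is a clean way of making that reduction rigorous without redoing the computation.
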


\begin{corollary}
Let the matrix-valued function $W^{(k)}(t),$ and the matrices $R^{\left( \alpha
	,\beta ,v\right) }_{n-k,2},R^{\left( \alpha
,\beta ,v\right) }_{n-k,1}$ and $R^{\left( \alpha
,\beta ,v\right) }_{n-k,0}$ be defined  as in (\ref{wk}) and (\ref{Rn2}). From the
 Rodrigues formula we get the explicit expressions for the sequence of
polynomials $\left( P_{n}^{\left( \alpha ,\beta ,v,k\right) }\right) _{n\geq
k}$ in terms of the classical Jacobi polynomials $p_{n}^{(\alpha ,\beta )}(t)$,%
\begin{multline*}
 P_{n}^{\left( \alpha ,\beta ,v,k\right) }\left(
t\right) =(n-k)!\left( p_{n-k}^{(\alpha
+2+k,\beta +k)}(1-2t)R_{n-k,2}^{\left( \alpha +k,\beta +k,v\right) }t^{2}  +p_{n-k}^{(\alpha +1+k,\beta +k)}(1-2t)R_{n-k,1}^{\left( \alpha
+k,\beta +k,v\right) }t\right.
\\
\left.+p_{n-k}^{(\alpha +k,\beta +k)}(1-2t)R_{n-k,0}^{\left(
\alpha +k,\beta +k,v\right) }\right) \left(\widetilde{W}^{(k)}\right)^{-1}
\end{multline*}%
and%
\begin{multline*}
P_{n}^{\left( \alpha ,\beta ,v,k\right) }\left(
t\right) =(n-k)!\left( p_{n-k}^{(\alpha
+k,\beta +k)}(1-2t)\mathscr{C}_{n-k,2}^{(\alpha+k,\beta+k,v)}+p_{n-k+1}^{(\alpha +k,\beta
+k)}(1-2t)\mathscr{C}_{n-k,1}^{(\alpha+k,\beta+k,v)}\right.\\
\qquad\qquad\qquad \left. +p_{n-k+2}^{(\alpha +k,\beta +k)}(1-2t)\mathscr{C}_{n-k,0}^{(\alpha+k,\beta+k,v)}\right) \left(\widetilde{W%
}^{(k)}\right)^{-1},
\end{multline*}%
with $\mathscr{C}_{n-k,i}^{\left( \alpha +k,\beta +k,v\right) },$ \ \ $i=0,1,2$, given by (\ref{PfunJ2}).

\end{corollary}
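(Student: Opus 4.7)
The plan is to reduce the statement directly to Corollary \ref{RodriguesJacobi} via the identification $W^{(k)}(t)=W^{(\alpha+k,\beta+k,v)}(t)$ of Proposition \ref{WKDK}. The preceding proposition records that
\[
P_n^{(\alpha,\beta,v,k)}(t) = \bigl(R_n^{(\alpha,\beta,v,k)}(t)\bigr)^{(n-k)}\bigl(W^{(k)}(t)\bigr)^{-1}, \qquad R_n^{(\alpha,\beta,v,k)}(t)=R_{n-k}^{(\alpha+k,\beta+k,v)}(t),
\]
so, after relabelling the degree by $n\mapsto n-k$, the sequence $\bigl(P_n^{(\alpha,\beta,v,k)}\bigr)_{n\geq k}$ coincides with the sequence of monic orthogonal polynomials associated with the weight $W^{(\alpha+k,\beta+k,v)}$.

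First I would apply Corollary \ref{RodriguesJacobi} with parameters $(\alpha,\beta,n)$ replaced by $(\alpha+k,\beta+k,n-k)$. Expanding the $(n-k)$-th derivative via the classical Jacobi Rodrigues formula, exactly as in the proof of Corollary \ref{RodriguesJacobi}, yields directly the first displayed identity in the statement, with $(\widetilde{W}^{(k)})^{-1}$ in place of $(\widetilde{W}^{(\alpha+k,\beta+k,v)})^{-1}$; these agree by the definition of $\widetilde{W}^{(k)}$ in (\ref{wk}). Second, I would apply the Jacobi contiguous relation used in the proof of Corollary \ref{RodriguesJacobi} in order to convert the three Jacobi factors $p_{n-k}^{(\alpha+k+j,\beta+k)}(1-2t)$, $j=0,1,2$, into $p_{n-k}^{(\alpha+k,\beta+k)}$, $p_{n-k+1}^{(\alpha+k,\beta+k)}$, $p_{n-k+2}^{(\alpha+k,\beta+k)}$ with coefficient matrices $\mathscr{C}_{n-k,i}^{(\alpha+k,\beta+k,v)}$ as defined there; this produces the second displayed identity.

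The only subtlety is to check that the hypotheses of Corollary \ref{RodriguesJacobi} are met under the parameter shift. That corollary requires $\alpha+k>-1$, $\beta+k>-1$ and $|\alpha-\beta|<|v|<(\alpha+k)+(\beta+k)+2=\alpha+\beta+2(k+1)$, which are precisely the assumptions on $(\alpha,\beta,v,k)$ imposed in Proposition \ref{WKDK}. Beyond this bookkeeping no genuinely new calculation is required; the main obstacle is simply to keep the shifted parameters and relabelled indices consistent throughout, in particular to notice that the shift in $\alpha$ changes the Jacobi superscripts in exactly the way stated.
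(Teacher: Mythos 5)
Your reduction via the identification $W^{(k)}=W^{(\alpha+k,\beta+k,v)}$ of Proposition \ref{WKDK} and the substitution $(\alpha,\beta,n)\mapsto(\alpha+k,\beta+k,n-k)$ in Corollary \ref{RodriguesJacobi} is correct and is exactly what the paper intends, since it only remarks that these formulas are ``obtained in a similar way'' as Theorem \ref{Rodrigues} and Corollary \ref{RodriguesJacobi}. Your write-up is, if anything, more explicit than the paper's in verifying that the shifted parameters satisfy the required hypotheses.
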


\begin{proposition}
The orthogonal monic polynomials $\left( P_{n}^{\left( \alpha ,\beta
,v,k\right) }\right) _{n\geq k}$ satisfy the three-term recurrence relation%
\begin{equation*}
tP_{n}^{\left( \alpha ,\beta ,v,k\right) }(t)=P_{n+1}^{\left( \alpha ,\beta
,v,k\right) }(t)+B_{n}^{\left(
\alpha ,\beta ,v,k\right) }P_{n}^{\left( \alpha ,\beta ,v,k\right) }(t)
+A_{n}^{\left( \alpha ,\beta ,v,k\right) }P_{n-1}^{\left( \alpha ,\beta ,v,k\right)
}(t)
\end{equation*}%
with 
\begin{equation*}
B_{n}^{\left( \alpha ,\beta ,v,k\right) }=B_{n-k}^{\left( \alpha +k,\beta
+k,v\right) },\quad A_{n}^{\left( \alpha ,\beta ,v,k\right) }=A_{n-k}^{\left(
\alpha +k,\beta +k,v\right) },\ n\geq k. 
\end{equation*} 
The explicit expressions of $B_{n}^{\left( \alpha ,\beta
	,v\right) }$ and $A_{n}^{\left( \alpha ,\beta
	,v\right) }$ are given in (\ref{An})-(\ref{losBn}).
\end{proposition}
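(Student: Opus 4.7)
The proof is essentially a matter of identifying two sequences of monic orthogonal polynomials and reading off the three–term recurrence from the one already computed in Proposition \ref{recurre_inicial}. The plan is to exploit the identification $W^{(\alpha,\beta,v,k)}(t) = W^{(\alpha+k,\beta+k,v)}(t)$ established in Proposition \ref{WKDK}, together with uniqueness of monic orthogonal polynomials.

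First I would observe that, by (\ref{Plsderiv}), the matrix-valued function $P_n^{(\alpha,\beta,v,k)}(t)$ is a \emph{monic} polynomial of degree $n-k$. By the Corollary preceding this Proposition, the sequence $(P_n^{(\alpha,\beta,v,k)})_{n\geq k}$ is orthogonal with respect to the weight matrix $W^{(k)}$; by Proposition \ref{WKDK}, this weight coincides with $W^{(\alpha+k,\beta+k,v)}$. Hence $(P_n^{(\alpha,\beta,v,k)})_{n\geq k}$ and $(P_{n-k}^{(\alpha+k,\beta+k,v)})_{n\geq k}$ are both sequences of monic orthogonal polynomials with respect to the same weight matrix $W^{(\alpha+k,\beta+k,v)}$, and their $m$-th terms have the same degree $m=n-k$. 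Invoking the uniqueness statement recalled in the Preliminaries section (for each $n$, the monic orthogonal polynomial of degree $n$ with respect to a given weight matrix is unique), I conclude
\begin{equation*}
P_n^{(\alpha,\beta,v,k)}(t)=P_{n-k}^{(\alpha+k,\beta+k,v)}(t), \quad n\geq k.
\end{equation*}

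With this identification in hand, I would simply apply Proposition \ref{recurre_inicial} to the weight $W^{(\alpha+k,\beta+k,v)}$: the monic orthogonal polynomials with respect to that weight satisfy
\begin{equation*}
tP_{m}^{(\alpha+k,\beta+k,v)}(t)=P_{m+1}^{(\alpha+k,\beta+k,v)}(t)+B_{m}^{(\alpha+k,\beta+k,v)}P_{m}^{(\alpha+k,\beta+k,v)}(t)+A_{m}^{(\alpha+k,\beta+k,v)}P_{m-1}^{(\alpha+k,\beta+k,v)}(t).
\end{equation*}
Setting $m=n-k$ and translating via the identification above yields the claimed recurrence with
\begin{equation*}
B_n^{(\alpha,\beta,v,k)}=B_{n-k}^{(\alpha+k,\beta+k,v)}, \qquad A_n^{(\alpha,\beta,v,k)}=A_{n-k}^{(\alpha+k,\beta+k,v)}.
\end{equation*}

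There is no real technical obstacle here: all the work has already been done in Proposition \ref{WKDK} (identifying the weight for the derivatives as a shifted copy of the original weight) and in the preceding Corollary (orthogonality of the derivatives). The only point one needs to be slightly careful about is the parameter restrictions: to invoke Proposition \ref{recurre_inicial} for $W^{(\alpha+k,\beta+k,v)}$, the assumptions $\alpha+k>-1$, $\beta+k>-1$ and $|\alpha-\beta|<|v|<(\alpha+k)+(\beta+k)+2$ must hold, and these are exactly the hypotheses already imposed in Proposition \ref{WKDK} (and inherited throughout the section). The explicit expressions for $B_{m}^{(\alpha+k,\beta+k,v)}$ and $A_{m}^{(\alpha+k,\beta+k,v)}$ then follow by substituting $\alpha\mapsto \alpha+k$, $\beta\mapsto \beta+k$, $n\mapsto n-k$ in formulas (\ref{An})–(\ref{losBn}).
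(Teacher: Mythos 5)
Your argument is correct, and it coincides with the justification the paper itself gives in the sentence immediately preceding its proof environment: ``Considering that $W^{(k)}(t)=W^{(\alpha+k,\beta+k,v)}(t)$ (see Proposition \ref{WKDK}), the previous recurrence follows directly from (\ref{recurre_inicial}).'' The proof the paper then displays ``for completeness'' takes a genuinely different, computational route: it expands $P_{n}^{(\alpha,\beta,v,k)}(t)=\sum_{s}\mathcal{P}_{n-k}^{s}t^{s}$, extracts the two subleading coefficients $\mathcal{P}_{n-k}^{\,n-k-1}$ and $\mathcal{P}_{n-k}^{\,n-k-2}$ explicitly from the hypergeometric representation (\ref{QHk}), reads off $B_{n}^{(\alpha,\beta,v,k)}$ and $A_{n}^{(\alpha,\beta,v,k)}$ by matching the coefficients of orders $n-k$ and $n-k-1$ in the recurrence, and finally compares with (\ref{An})--(\ref{losBn}) under the substitution $\alpha\mapsto\alpha+k$, $\beta\mapsto\beta+k$, $n\mapsto n-k$. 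Your route, resting on the uniqueness of the monic orthogonal sequence and the identification $P_{n}^{(\alpha,\beta,v,k)}=P_{n-k}^{(\alpha+k,\beta+k,v)}$, is shorter and more conceptual; the paper's computation is self-contained and independently confirms that identification at the level of the leading coefficients. One small correction to your closing remark: the conditions needed to invoke Proposition \ref{recurre_inicial} for $W^{(\alpha+k,\beta+k,v)}$, namely $\alpha+k>-1$, $\beta+k>-1$ and $|\alpha-\beta|<|v|<\alpha+\beta+2k+2$, are not the hypotheses ``imposed'' by Proposition \ref{WKDK} so much as automatic consequences of (indeed strictly weaker than) the standing assumptions $\alpha,\beta>-1$ and $|\alpha-\beta|<|v|<\alpha+\beta+2$ on the original weight, so no additional restriction is being introduced.
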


Considering that $W^{\left( k\right) }(t)=W^{\left( \alpha +k,\beta +k,v\right) }(t)$ (see Proposition \ref{WKDK}), the previous recurrence follows directly from (\ref{recurre_inicial}). Notwithstanding, we include the following proof for completeness.

\medskip

\begin{proof}
If we write $P_{n}^{\left( \alpha ,\beta ,v,k\right) } (t)=\sum_{s=0}%
^{n-k}\mathcal{P}_{n-k}^{s}t^{s}$, from (\ref{QHk}) we have the following explicit
expressions,%
\begin{equation}\label{Pn-k-1}
\mathcal{P}_{n-k}^{n-k-1}=\dfrac{\left( n-k\right)}{v} 
\begin{pmatrix}
-\dfrac{(\alpha+n)v-\kappa_{-v,-\beta}}{(\alpha +\beta +2n+2)} & 
\dfrac{\kappa_{-v,-\beta}}{(\kappa_{v,\beta} +2n+2)} \\ 
-\dfrac{\kappa_{v,-\beta}}{(\kappa_{-v,\beta} +2n+2)} & -\dfrac{(\alpha
+n)v+\kappa_{-v,-\beta}}{(\alpha +\beta +2n+2)}%
\end{pmatrix},%
\end{equation}%

\begin{align}\label{Pn-k-2}
\mathcal{P}_{n-k} ^{n-k-2} &=\dfrac{\left( n-k\right) \left( n-k-1\right)(\alpha+n+1)}
{(\alpha +\beta+2n+2)}
 \left[\dfrac{\alpha+n}{2(\alpha +\beta+2n+1)}\begin{pmatrix}
\dfrac{\kappa_{v,\beta}+2n}{\kappa_{v,\beta}+2n+2}&0  \\ 
0 &
\dfrac{\kappa_{-v,\beta}+2n}{\kappa_{-v,\beta}+2n+2}
\end{pmatrix} \right.\\
\nonumber
&\qquad\qquad\qquad+\dfrac{n+\beta+1}{\alpha +\beta+2n+1}\begin{pmatrix}
\dfrac{1}{\kappa_{v,\beta}+2n+2}&0  \\ 
0 &
\dfrac{1}{\kappa_{-v,\beta}+2n+2}
\end{pmatrix} \\
&\qquad\qquad\qquad\qquad\qquad\qquad\qquad\qquad+\left. \dfrac{1}{v}\begin{pmatrix} -\dfrac{\alpha -\beta}{(\kappa_{v,\beta}+2n+2)}&-\dfrac{\kappa_{-v,-\beta}}{%
	\kappa_{v,\beta}+2n+2}\\ \dfrac{\kappa_{v,-\beta}}{%
	\kappa_{-v,\beta}+2n+2}& \dfrac{\alpha -\beta}{(\kappa_{-v,\beta}+2n+2)}
\end{pmatrix}\right].
\end{align}%
If we consider the coefficient of order \thinspace $n-k$ and $n-k-1$ in the
three-term recurrence relation we have, %
\begin{eqnarray*}
B_{n}^{\left( \alpha ,\beta ,v,k\right) } &=&\mathcal{P}_{n-k}
^{n-k-1}-\mathcal{P}_{n+1-k} ^{n-k-1},  \\
A_{n}^{\left( \alpha ,\beta ,v,k\right) }&=& \mathcal{ P}_{n-k}
^{n-k-2}-\mathcal{P}_{n+1-k}^{n-k-1}-B_{n}^{\left( \alpha ,\beta ,v,k\right) }\mathcal{ P}_{n-k}
^{n-k-1}, \qquad  n \in \mathbb{N},
\end{eqnarray*}%
respectively. Comparing with the expressions of $B_{n-k}^{\left( \alpha+k ,\beta+k
	,v\right) }$ and $A_{n-k}^{\left( \alpha+k ,\beta+k
	,v\right) }$ given by substituting properly in (\ref{An})-(\ref{losBn}), the proposition follows.
\end{proof}

\section{Shift operators}\label{shifff}

In this section we use  Pearson equation (\ref{PE}) to give explicit lowering and
rising operators for the monic $n$-degree  polynomials $P_{n+k}^{\left( \alpha ,\beta
,v,k\right) }\left(t\right)$, $n\geq 0$, defined in (\ref{Plsderiv}). Moreover, from the existence of the shift operators we deduce
a Rodrigues formula for the  sequence of derivatives $\left(P_{n+k}^{\left( \alpha ,\beta
	,v,k\right) }\right)_{n\geq 0}$, and we find a  matrix-valued differential
operator for which these matrix-valued polynomials are eigenfunctions. In what follows, we will consider the matrix-valued functions $W^{(k)}\left(t\right),\Phi ^{(k)}\left(t\right)$ and $\Psi ^{(k)}\left(t\right)$, $k\in \mathbb{N}$ as defined above in Theorem  \ref{Pearson}.

For any pair of  matrix-valued functions $P$ and $Q$, we denote%
\begin{equation*}
\left\langle P,Q\right\rangle _{k}=\int_{0}^{1}P\left( t\right)
W^{(k)}\left( t\right) Q^{\ast }\left( t\right) dt.
\end{equation*}

\begin{proposition}
\label{Shift}Let $\eta ^{(k)}$  be the first order matrix-valued right differential
operator \begin{equation}\label{shif}\eta ^{(k)}= \dfrac{d}{dt} (\Phi ^{(k)}\left( t\right))^{\ast}
+ (\Psi ^{(k)}\left( t\right))^{\ast}.\end{equation} Then 
$\dfrac{d}{dt}:L^{2}\left( W^{(k)}\right) \rightarrow L^{2}\left(
W^{(k+1)}\right) $ and $\eta ^{k}:L^{2}\left( W^{(k+1)}\right) \rightarrow
L^{2}\left( W^{(k)}\right) $ satisfy 
\begin{equation*}
\left\langle\frac{dP}{dt},Q\right\rangle _{k+1}=-\left\langle P, Q\eta
^{(k)}\right\rangle _{k}.
\end{equation*}
\end{proposition}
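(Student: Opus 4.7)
The plan is to prove the adjointness relation by a direct integration by parts, exploiting the Pearson equation (\ref{PE}) together with the identities (\ref{IdW^k1})--(\ref{IdW^k2}) from Remark \ref{losW^k}. Recall that these say
\[
W^{(k+1)}(t)=W^{(k)}(t)\,\Phi^{(k)}(t),\qquad \bigl(W^{(k+1)}(t)\bigr)'=W^{(k)}(t)\,\Psi^{(k)}(t).
\]

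First I would write
\[
\left\langle \tfrac{dP}{dt},Q\right\rangle_{k+1}=\int_{0}^{1} P'(t)\,W^{(k+1)}(t)\,Q^{\ast}(t)\,dt,
\]
and integrate by parts with $u=P$ and $dv=\bigl(W^{(k+1)} Q^{\ast}\bigr)'(t)\,dt$. Differentiating $W^{(k+1)}Q^{\ast}$ via the Leibniz rule and substituting the Pearson--type identities above gives
\[
\bigl(W^{(k+1)}Q^{\ast}\bigr)'=W^{(k)}\Psi^{(k)}Q^{\ast}+W^{(k)}\Phi^{(k)}(Q^{\ast})'.
\]
The crucial algebraic observation is that the bracketed factor is exactly $(Q\eta^{(k)})^{\ast}$: indeed, taking the conjugate transpose of $Q\eta^{(k)}=Q'(\Phi^{(k)})^{\ast}+Q(\Psi^{(k)})^{\ast}$ and using that conjugate transpose commutes with $d/dt$, one obtains
\[
\bigl(Q\eta^{(k)}\bigr)^{\ast}=\Phi^{(k)}(Q^{\ast})'+\Psi^{(k)}Q^{\ast}.
\]
Therefore $\bigl(W^{(k+1)}Q^{\ast}\bigr)'=W^{(k)}\bigl(Q\eta^{(k)}\bigr)^{\ast}$, and the integration by parts yields
\[
\left\langle\tfrac{dP}{dt},Q\right\rangle_{k+1}=\bigl[P(t)\,W^{(k+1)}(t)\,Q^{\ast}(t)\bigr]_{0}^{1}-\int_{0}^{1} P(t)\,W^{(k)}(t)\bigl(Q\eta^{(k)}\bigr)^{\ast}(t)\,dt.
\]

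The remaining point is the vanishing of the boundary term. Since $W^{(k+1)}(t)=t^{\alpha+k+1}(1-t)^{\beta+k+1}\widetilde{W}^{(\alpha,\beta,v,k+1)}(t)$ with $\widetilde{W}^{(\alpha,\beta,v,k+1)}$ a polynomial matrix, and the standing hypothesis $\alpha,\beta>-(k+1)$ gives $\alpha+k+1>0$ and $\beta+k+1>0$, we have $W^{(k+1)}(0)=W^{(k+1)}(1)=0$; together with the polynomial growth of $P$ and $Q$ this kills the boundary contribution. The identity to be proved then follows at once. I also need to briefly remark that $d/dt$ and $\eta^{(k)}$ do map the respective $L^{2}$-spaces into each other, which follows because differentiation of a polynomial is a polynomial and $\eta^{(k)}$ sends polynomials to polynomials, so, by polynomial approximation in $L^{2}(W^{(\cdot)})$, they extend to the claimed bounded operators (this is standard for matrix-valued orthogonality measures of this type).

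The main (and essentially only) delicate step is controlling the boundary term; once the hypotheses on $\alpha,\beta$ in Proposition \ref{WKDK} are invoked this is routine, and the rest of the argument is algebraic bookkeeping using Remark \ref{losW^k}.
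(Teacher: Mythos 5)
Your proof is correct and follows essentially the same route as the paper: integrate by parts and substitute the identities $W^{(k+1)}=W^{(k)}\Phi^{(k)}$ and $\bigl(W^{(k+1)}\bigr)'=W^{(k)}\Psi^{(k)}$ from Remark \ref{losW^k} to recognize $W^{(k)}\bigl(Q\eta^{(k)}\bigr)^{\ast}$. You are in fact slightly more careful than the paper, which silently discards the boundary term that you justify via $\alpha+k+1>0$ and $\beta+k+1>0$.
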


\begin{proof}
From $\left\langle\dfrac{dP}{dt},Q\right\rangle _{k+1}=\d\int_{0}^{1}\dfrac{dP(t)}{dt}
W^{(k+1)}(t)Q^{\ast}(t)dt,$ integrating by parts and taking into account equalities (\ref{IdW^k1}) and (\ref{IdW^k2}) in Remark \ref{losW^k} we get,
\begin{eqnarray*}
\left\langle\dfrac{dP}{dt},Q\right\rangle _{k+1}&=&-\int_{0}^{1}P(t)%
\dfrac{d}{dt}\left(W(t)^{(k+1)}\right) Q^{\ast}(t)dt-\int_{0}^{1}P(t)
W^{(k+1)}(t)\left(\dfrac{dQ(t)}{dt}\right)^{\ast}dt\\ 
&=&-\int_{0}^{1}P(t)W^{(k)}(t)\Psi ^{(k)}(t)Q^{\ast}(t)dt-\int_{0}^{1} P(t) W^{(k)}(t)\Phi ^{(k)}(t) \left(\dfrac{dQ(t)}{dt}\right)^{\ast}dt\\
&=&-\int_{0}^{1}P(t)W^{(k)}(t)\left( \Psi ^{(k)}\left(
t\right)Q^{\ast}(t) + \Phi ^{(k)}\left( t\right)\left(\dfrac{dQ(t)}{dt}\right)^{\ast}
\right) Q^{\ast}(t)dt\\
&=&-\left\langle P,Q\eta ^{(k)}\right\rangle _{k}.
\end{eqnarray*}
\end{proof}

\begin{lemma}\label{lemmaCnk}
The following identity holds true
$$ I\eta
^{(k+n-1)}\cdots \eta ^{(k+1)}\eta ^{(k)}=\mathcal{C}_{n}^{k}P_{n+k}^{\left( \alpha ,\beta ,v,k\right) }, \quad n \geq 1,$$ for a given $k\geq 0$, where 

\begin{equation}\label{Cnk}
\mathcal{C}_{n}^{k} =\left(
-1\right) ^{n} 
\left( \alpha +\beta +3+2k+n\right) _{n}\begin{pmatrix}
\dfrac{\left( \kappa_{v,\beta} +2(k+1+n)\right) }{\left( \kappa_{v,\beta} +2(k+1)\right) } & 0 \\ 
0 & \dfrac{\left( \kappa_{-v,\beta} +2(k+1+n)\right)}{\left( \kappa_{-v,\beta} +2(k+1)\right) }%
\end{pmatrix}%
,\text{ \ \ }n\geq 1.\ 
\end{equation}
\end{lemma}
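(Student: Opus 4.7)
The plan is to prove the identity by induction on $n\geq 1$, reducing the lemma to the uniqueness of monic orthogonal polynomials. Setting $F_n^k := I\eta^{(k+n-1)}\cdots\eta^{(k)}$, I will verify that $F_n^k$ is a matrix polynomial of degree exactly $n$, that its leading coefficient equals $\mathcal{C}_n^k$, and that it is $W^{(k)}$-orthogonal to every matrix polynomial of degree less than $n$. Since $(P_{n+k}^{(\alpha,\beta,v,k)})_{n\geq 0}$ is the (unique) monic orthogonal sequence for $W^{(k)}$, these three facts force $F_n^k = \mathcal{C}_n^k P_{n+k}^{(\alpha,\beta,v,k)}$.

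For the base case $n=1$, a direct computation gives $I\eta^{(k)}=(\Psi^{(k)})^{\ast}$. Using (\ref{idBk}) together with the fact that $\mathscr{A}_2^k$ is real and diagonal, this is a polynomial of degree $1$ with leading coefficient $(\mathscr{B}_1^k)^{\ast}=(\alpha+\beta+4+2k)\mathscr{A}_2^k$, which one checks coincides with $\mathcal{C}_1^k$ from (\ref{Cnk}). Orthogonality of $I\eta^{(k)}$ against any constant matrix polynomial $Q$ follows from Proposition \ref{Shift} and the Hermitian symmetry $\langle A,B\rangle_k=\langle B,A\rangle_k^{\ast}$: we get $\langle I\eta^{(k)},Q\rangle_k=-\langle I,\tfrac{dQ}{dt}\rangle_{k+1}^{\ast}=0$ because $\tfrac{dQ}{dt}=0$.

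For the inductive step, assume the statement at level $(n-1,k+1)$, so that
\[
I\eta^{(k+n-1)}\cdots\eta^{(k+1)}=\mathcal{C}_{n-1}^{k+1}\,P_{n+k}^{(\alpha,\beta,v,k+1)},
\]
a polynomial of degree $n-1$. Applying $\eta^{(k)}=\tfrac{d}{dt}(\Phi^{(k)})^{\ast}+(\Psi^{(k)})^{\ast}$ on the right yields $F_n^k=\mathcal{C}_{n-1}^{k+1}\,P_{n+k}^{(\alpha,\beta,v,k+1)}\eta^{(k)}$, which has degree $n$ by comparing the contributions from $t^{n-1}\cdot(\Psi^{(k)})^{\ast}$ and $t^{n-2}\cdot(\Phi^{(k)})^{\ast}$. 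Its leading coefficient equals
\[
\mathcal{C}_{n-1}^{k+1}\bigl((\mathscr{B}_1^k)^{\ast}+(n-1)(\mathscr{A}_2^k)^{\ast}\bigr)
=\mathcal{C}_{n-1}^{k+1}(\alpha+\beta+3+2k+n)\mathscr{A}_2^k,
\]
and the Pochhammer identity $(\alpha+\beta+3+2k+n)\cdot(\alpha+\beta+4+2k+n)_{n-1}=(\alpha+\beta+3+2k+n)_n$, combined with termwise multiplication of the diagonal entries of $\mathcal{C}_{n-1}^{k+1}$ and $\mathscr{A}_2^k$, confirms that this equals $\mathcal{C}_n^k$. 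For orthogonality, given any $Q$ of degree $<n$, Proposition \ref{Shift} and Hermitian symmetry give
\[
\langle P_{n+k}^{(\alpha,\beta,v,k+1)}\eta^{(k)},Q\rangle_k
=-\langle P_{n+k}^{(\alpha,\beta,v,k+1)},\tfrac{dQ}{dt}\rangle_{k+1}=0,
\]
since $\deg\tfrac{dQ}{dt}<n-1=\deg P_{n+k}^{(\alpha,\beta,v,k+1)}$ and $P_{n+k}^{(\alpha,\beta,v,k+1)}$ is $W^{(k+1)}$-orthogonal to all polynomials of lower degree.

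The only nontrivial step is the algebraic verification that $\mathcal{C}_{n-1}^{k+1}(\alpha+\beta+3+2k+n)\mathscr{A}_2^k=\mathcal{C}_n^k$, but this is routine once the Pochhammer identity above is noticed. The conceptual heart of the proof is the shift-plus-uniqueness scheme: the Pearson equation (\ref{PE}) is the source of the adjoint relation in Proposition \ref{Shift}, that relation propagates orthogonality through $\eta^{(k)}$, and uniqueness of monic orthogonal polynomials identifies the resulting polynomial up to the explicit matrix factor $\mathcal{C}_n^k$.
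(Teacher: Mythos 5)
Your proposal is correct and follows essentially the same route as the paper: both rest on the adjoint relation of Proposition \ref{Shift} (which transfers orthogonality through $\eta^{(k)}$), the uniqueness of the monic orthogonal sequence for $W^{(k)}$, and the identity $\mathscr{B}_1^{k}=(\alpha+\beta+4+2k)\mathscr{A}_2^{k}$ to evaluate the leading coefficient. The only cosmetic difference is that you organize the iteration as a formal induction tracking the leading coefficient step by step, whereas the paper applies the raising operators all at once and evaluates the leading coefficient as the single product $\prod_{i=1}^{n}\left((i-1)\mathscr{A}_2^{k+n-i}+\mathscr{B}_1^{k+n-i}\right)$.
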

\begin{proof}
 It holds that  $I\eta^{(k+n-1)}\cdots\eta ^{(k+1)}\eta ^{(k)}$ is a
 polynomial of degree $n$. From  the definition of the monic  sequence of derivatives in (\ref{Plsderiv}) one has  $$\dfrac{d}{dt}P_{n+k}^{(\alpha,\beta,v,k)}\left(t\right)=nP_{n+k}^{(\alpha,\beta,v,k+1)}\left(t\right).$$
 Thus, Proposition \ref{Shift} implies that $P_{n+k}^{(\alpha,\beta,v,k+1)}\eta^{(k)}$ is a multiple of $P_{n+k}^{(\alpha,\beta,v,k)}$.

 


Therefore, applying the raising operators $\eta^{(k+n-1)}\cdots\eta ^{(k+1)}\eta ^{(k)}$ to $P_{n+k}^{(\alpha,\beta,v,k+n)}=I$ we get a multiple of $P_{n+k}^{\left( \alpha ,\beta
	,v,k\right) }.$   For  the leading coefficient ${C}_{n}^{k}$ of the polynomial  $I\eta^{(k+n-1)}\cdots \eta ^{(k+1)} \eta ^{(k)}$ one obtains the expression

\begin{equation*}
	\mathcal{C}_{n}^{k} =\prod_{i=1}^{n}\left( \left( i-1\right)
	\mathscr{A}_{2}^{k+n-i}+\mathscr{B}_{1}^{k+n-i}\right) .
	\end{equation*}

The diagonal matrix entries $\mathscr{A}_2^k$ and $\mathscr{B}_1^k$ are defined  in (\ref{A2}) and (\ref{idBk}). 
Then, by replacing $\mathscr{B}_{1}^{k}=\left( \alpha +\beta +4+2k\right) \mathscr{A}_{2}^{k}$ in
the identity above we have%
\begin{eqnarray*}
	\mathcal{C}_{n}^{k}	&=&\prod_{i=1}^{n}\left( \left(
	2n+\alpha +\beta +3+2k-i\right) \mathscr{A}_{2}^{k+n-i}\right) \\
	&=&
	(-1)^{n}\prod_{i=1}^{n}\tiny{\left( 2n+\alpha +\beta +3+2k-i\right)\begin{pmatrix}
		\prod_{i=1}^{n}\tiny{\dfrac{ \left( \kappa_{v,\beta}+2(k+n-i+2)\right) }{\kappa_{v,\beta} +2(k+n-i+1)}} & 0 \\ 
		0 & \prod_{i=1}^{n}\tiny{\dfrac{\left(
			\kappa_{-v,\beta} +2(k+n-i+2)\right) }{\kappa_{-v,\beta}+2(k+n-i+1)}}%
	\end{pmatrix}}.%
\end{eqnarray*}%

Hence, the proof follows.

Note that $\mathcal{C}_{n}^{k}$ is non-singular since $|\alpha -\beta |<|v|<\alpha
+\beta +2\left( k+1\right) .$
\end{proof}

From the proposition and the lemma above we obtain another expression for the
Rodrigues formula.

\begin{proposition}
The polynomials $\left(P_{n+k}^{\left( \alpha ,\beta ,v,k\right) }\right)_{n\geq0}$  satisfy the following Rodrigues formula, 
\begin{equation*}
P_{n+k}^{\left( \alpha ,\beta ,v,k\right) }\left( t\right) =\left( \mathcal{C}_{n}^{k}\right) ^{-1}\left( \dfrac{d^{n}}{dt^{n}}W^{(k+n)}\left( t\right)
\right) \left( W^{(k)}\left(
t\right) \right) ^{-1}, \quad n\geq 1,
\end{equation*}
where the matrices $\mathcal{C}_n^k$ are given by the expression in (\ref{Cnk}).
\end{proposition}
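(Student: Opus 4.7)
The plan is to reduce the claimed Rodrigues formula, via Lemma~\ref{lemmaCnk}, to an identity expressing the iterated action of the shift operators on $I$ as an $n$-th derivative of $W^{(k+n)}$. The central tool is a pointwise identity, valid for every differentiable matrix-valued function $P$:
\[
P\,\eta^{(k)} \;=\; \bigl(P\,W^{(k+1)}\bigr)^{\prime}\bigl(W^{(k)}\bigr)^{-1}.
\]
To prove it, I expand the right-hand side using the Leibniz rule together with the identities (\ref{IdW^k1}) and (\ref{IdW^k2}) of Remark~\ref{losW^k}, obtaining
\[
(P\,W^{(k+1)})'(W^{(k)})^{-1} \;=\; P'\,W^{(k)}\Phi^{(k)}(W^{(k)})^{-1} \;+\; P\,W^{(k)}\Psi^{(k)}(W^{(k)})^{-1}.
\]
Because $W^{(k+1)}=W^{(k)}\Phi^{(k)}$ is Hermitian, $W^{(k)}\Phi^{(k)}=(\Phi^{(k)})^{\ast}W^{(k)}$, so $W^{(k)}\Phi^{(k)}(W^{(k)})^{-1}=(\Phi^{(k)})^{\ast}$. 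Similarly, since $(W^{(k+1)})'=W^{(k)}\Psi^{(k)}$ is the derivative of a Hermitian matrix, it is Hermitian, whence $W^{(k)}\Psi^{(k)}(W^{(k)})^{-1}=(\Psi^{(k)})^{\ast}$. Substituting recovers $P'(\Phi^{(k)})^{\ast}+P(\Psi^{(k)})^{\ast}$, which equals $P\,\eta^{(k)}$ by the definition (\ref{shif}).

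With the key identity in hand, I induct on $n\geq 1$, uniformly in $k\geq 0$, to prove
\[
I\,\eta^{(k+n-1)}\cdots\eta^{(k+1)}\eta^{(k)} \;=\; \bigl(W^{(k+n)}\bigr)^{(n)}\bigl(W^{(k)}\bigr)^{-1}.
\]
The base case $n=1$ is the key identity applied to $P=I$. For the inductive step, the inductive hypothesis at level $n-1$ with shift $k\mapsto k+1$ gives $I\,\eta^{(k+n-1)}\cdots\eta^{(k+1)}=(W^{(k+n)})^{(n-1)}(W^{(k+1)})^{-1}$. Denoting this function by $P$ and applying the key identity at index $k$, the factor $W^{(k+1)}$ on the right of $P$ cancels the $(W^{(k+1)})^{-1}$, leaving $\bigl((W^{(k+n)})^{(n-1)}\bigr)'(W^{(k)})^{-1}=(W^{(k+n)})^{(n)}(W^{(k)})^{-1}$, as desired.

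To conclude, I compare this equality with Lemma~\ref{lemmaCnk}, which states $I\,\eta^{(k+n-1)}\cdots\eta^{(k)}=\mathcal{C}_n^k\,P_{n+k}^{(\alpha,\beta,v,k)}$. Left-multiplication by $(\mathcal{C}_n^k)^{-1}$, which is well-defined under the standing assumption $|\alpha-\beta|<|v|<\alpha+\beta+2(k+1)$, yields the Rodrigues formula. The only genuinely delicate step is the key identity; once the Hermitian symmetries of $W^{(k)}$, $W^{(k+1)}$ and $(W^{(k+1)})'$ are invoked, the induction and conclusion are mechanical.
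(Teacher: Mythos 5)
Your proposal is correct and follows essentially the same route as the paper: establish the pointwise identity $P\,\eta^{(k)}=\bigl(P\,W^{(k+1)}\bigr)'\bigl(W^{(k)}\bigr)^{-1}$ from the Pearson identities of Remark \ref{losW^k}, iterate it, and then invoke Lemma \ref{lemmaCnk} with $Q=I$. The only difference is cosmetic: you spell out the Hermitian-symmetry step $W^{(k)}\Phi^{(k)}(W^{(k)})^{-1}=(\Phi^{(k)})^{\ast}$ and $W^{(k)}\Psi^{(k)}(W^{(k)})^{-1}=(\Psi^{(k)})^{\ast}$ that the paper leaves implicit, and you phrase the iteration as a formal induction in $n$.
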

\begin{proof}
Let $Q$  be a matrix-valued function and $\eta ^{(k)}$ the raising operator in (\ref{shif}), then $$%
Q\eta ^{(k)}=\dfrac{dQ}{dt} (\Phi ^{(k)})^{\ast}+Q (\Psi
^{(k)})^{\ast}.$$ Using the identities  (\ref{IdW^k1}) and (\ref{IdW^k2})
we obtain%
\begin{equation*}
Q\eta ^{(k)}=\frac{d}{dt}\left(Q W^{(k+1)}\right)\left( W^{(k)}\right) ^{-1}.
\end{equation*}%
Iterating, it gives 
\begin{equation*}
 Q\eta ^{(k+n-1)}\cdots\eta ^{(k+1)}\eta ^{(k)}=\dfrac{%
d^{n}}{dt^{n}}\left(Q W^{(k+n)}\right)\left( W^{(k)}\right) ^{-1}.
\end{equation*}%
Now, taking $Q\left( t\right) =I\,$\ and using Lemma \ref{lemmaCnk} we have 
\begin{equation*}
P_{n+k}^{\left( \alpha ,\beta ,v,k\right) }(t)=\left( \mathcal{C}_{n}^{k}\right) ^{-1}\dfrac{%
d^{n}}{dt^{n}}\left( W^{(k+n)}(t)\right) \left( W^{(k)}(t)\right) ^{-1}.
\end{equation*}
\end{proof}

\begin{corollary}
Let $W^{(k)}\left( t\right) $ be the weight matrix (\ref{wk}). Then, the differential
operator
\begin{equation}
E^{(k)}= \frac{d}{dt}\circ \eta ^{(k)}=\frac{d^{2}}{dt^{2}}(\Phi ^{(k)}\left( t\right))^{\ast} + \frac{d}{dt}(\Psi ^{(k)}\left( t\right))^{\ast}%
  \label{OE}
\end{equation}%
is symmetric with respect to $W^{(k)}\left( t\right) $ for all $k\in 
\mathbb{N}
_{0}.$ Moreover, the polynomials $\left( P_{n+k}^{\left( \alpha
,\beta ,v,k\right) } \right) _{n\geq 0}$ are eigenfunctions                                                                 of
the operator $E^{(k)}$ with eigenvalue%
\begin{equation*}
\Lambda _{n}\left( E^{(k)}\right) =n(n+\alpha+\beta+3+2k)\mathscr{A}^{k}_{2},
\end{equation*}

where $\mathscr{A}^{k}_{2}$ is given by (\ref{A2}).
\end{corollary}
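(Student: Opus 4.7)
My plan is to handle symmetry and the eigenvalue computation separately, because they rely on different tools already available in the paper.

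\textbf{Symmetry of $E^{(k)}$.} Since $E^{(k)}=\tfrac{d}{dt}\circ\eta^{(k)}$, the right-action on a polynomial $P$ reads $PE^{(k)}=P''(\Phi^{(k)})^{\ast}+P'(\Psi^{(k)})^{\ast}=(P')\eta^{(k)}$. Thus $\langle PE^{(k)},Q\rangle_{k}=\langle (P')\eta^{(k)},Q\rangle_{k}$. Applying Proposition~\ref{Shift} (in the adjoint form obtained from $\langle A,B\rangle^{\ast}=\langle B,A\rangle$, valid because $W^{(k)}=(W^{(k)})^{\ast}$) gives
\begin{equation*}
\langle (P')\eta^{(k)},Q\rangle_{k}=-\langle P',Q'\rangle_{k+1}.
\end{equation*}
Using Proposition~\ref{Shift} a second time, now with the derivative on the other argument,
\begin{equation*}
-\langle P',Q'\rangle_{k+1}=\langle P,(Q')\eta^{(k)}\rangle_{k}=\langle P,QE^{(k)}\rangle_{k}.
\end{equation*}
The chain of equalities gives $\langle PE^{(k)},Q\rangle_{k}=\langle P,QE^{(k)}\rangle_{k}$, which is the required symmetry condition \eqref{symcond} relative to $W^{(k)}$.

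\textbf{Eigenfunction property.} Because $\Phi^{(k)}$ has degree two and $\Psi^{(k)}$ has degree one, the operator $E^{(k)}$ sends a matrix polynomial of degree $n$ to one of degree $\leq n$. Apply this to $P_{n+k}^{(\alpha,\beta,v,k)}$, which has degree $n$. For any $m<n$ symmetry together with orthogonality of $\left(P_{\ell+k}^{(\alpha,\beta,v,k)}\right)_{\ell\geq 0}$ with respect to $W^{(k)}$ (Proposition~\ref{WKDK} together with the corollary established in Section~\ref{las_derivadas}) gives
\begin{equation*}
\bigl\langle P_{n+k}^{(\alpha,\beta,v,k)}E^{(k)},P_{m+k}^{(\alpha,\beta,v,k)}\bigr\rangle_{k}=\bigl\langle P_{n+k}^{(\alpha,\beta,v,k)},P_{m+k}^{(\alpha,\beta,v,k)}E^{(k)}\bigr\rangle_{k}=0,
\end{equation*}
since $P_{m+k}^{(\alpha,\beta,v,k)}E^{(k)}$ has degree $\leq m<n$. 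So $P_{n+k}^{(\alpha,\beta,v,k)}E^{(k)}$ is a multiple of $P_{n+k}^{(\alpha,\beta,v,k)}$ by a constant matrix $\Lambda_{n}(E^{(k)})$.

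\textbf{Computing the eigenvalue.} Since $P_{n+k}^{(\alpha,\beta,v,k)}$ is monic of degree $n$, the leading term of $P''_{n+k}(\Phi^{(k)})^{\ast}+P'_{n+k}(\Psi^{(k)})^{\ast}$ in $t^{n}$ equals $n(n-1)(\mathscr{A}_{2}^{k})^{\ast}+n(\mathscr{B}_{1}^{k})^{\ast}$. Both $\mathscr{A}_{2}^{k}$ and $\mathscr{B}_{1}^{k}=(\alpha+\beta+4+2k)\mathscr{A}_{2}^{k}$ are diagonal real matrices by \eqref{A2} and \eqref{idBk}, so they are self-adjoint. Matching leading coefficients with $\Lambda_{n}(E^{(k)})P_{n+k}^{(\alpha,\beta,v,k)}$ yields
\begin{equation*}
\Lambda_{n}(E^{(k)})=n(n-1)\mathscr{A}_{2}^{k}+n(\alpha+\beta+4+2k)\mathscr{A}_{2}^{k}=n(n+\alpha+\beta+3+2k)\mathscr{A}_{2}^{k},
\end{equation*}
as claimed. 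I do not anticipate a real obstacle here; the only subtlety is correctly chaining the two applications of Proposition~\ref{Shift} with attention to the sesquilinear (conjugate) convention, so the two minus signs cancel.
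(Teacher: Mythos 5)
Your proof is correct and follows essentially the same route as the paper: symmetry via the factorization $E^{(k)}=\frac{d}{dt}\circ\eta^{(k)}$ and a double application of Proposition~\ref{Shift}, and the eigenvalue by matching leading coefficients $n(n-1)\mathscr{A}_2^k+n\mathscr{B}_1^k$. You simply make explicit two steps the paper leaves implicit (the adjoint form of Proposition~\ref{Shift} and the standard degree-preservation-plus-orthogonality argument for why $P_{n+k}^{(\alpha,\beta,v,k)}$ is an eigenfunction), both of which are handled correctly.
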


\begin{proof}
From Proposition \ref{Shift} and the factorization $E^{(k)}=\dfrac{d}{dt}\circ \eta ^{(k)}$ follows directly that $E^{(k)}$ is symmetric with respect to $%
W^{(k)}.$

The eigenvalue is obtained by looking at the leading coefficients of $\Phi^{(k)}(t)$ and $\Psi^{(k)}(t)$ in (\ref{PE}). Thus, we obtain $\Lambda
_{n}\left( E^{(k)}\right) =n\left( n-1\right) \mathscr{A}^{k}_{2}+n\mathscr{B}^{k}_{1}=n(n+\alpha+\beta+3+2k)\mathscr{A}^{k}_{2}$.
\end{proof}
\begin{remark}
The operators $E^{(k)}$ and $D^{\left( k\right) }$ in (\ref{Dabvk}) commute. This result follows from the fact
that the corresponding eigenvalues $\Lambda _{n}\left( E^{(k)}\right) $ and $\Lambda^{(k)}
_{n+k}$ in (\ref{autov_k}) commute, and the linear map that assigns to each differential operator in the algebra of differential operators  $D(W^{(k)})$ its corresponding sequence of eigenvalues, is an isomorphism (see \cite[Propositions 2.6 and 2.8]{GT07}).
%

\end{remark}

\begin{remark}
The Darboux transform $\widetilde{E}^{(k)}= \eta^{(k)} \circ \dfrac{d}{dt}$ of the operator $E^{(k)}$ is not symmetric with respect to $W^{(k)}.$ Moreover, it is symmetric with respect to $W^{(k+1)}.$ Indeed,
\begin{eqnarray*}
\eta^{(k)} \circ \dfrac{d}{dt} &=&\frac{d^{2}}{dt^{2}} \left(\Phi ^{(k)}\left( t\right)\right)^{\ast}%
+\frac{d%
}{dt}\left(\frac{d%
}{dt} \left(\Phi ^{(k)}\left( t\right)\right)^{\ast} +\frac{d}{dt}\left(\Psi ^{(k)}\left( t\right)\right)^{\ast} \right) +\frac{d}{dt} 
\left(\Psi ^{(k)}\left( t\right)\right)^{\ast}\\
&=&\frac{d^{2}}{dt^{2}} \left(\mathscr{A}^{k}_{2}t^{2}+\mathscr{A}^{k}_{1}t+\mathscr{A}^{k}_{0}\right)^{\ast}%
+\frac{d}{dt}\left(\left( 2\mathscr{A}^{k}_{2}+\mathscr{B}^{k}_{1}\right) t+\mathscr{A}^{k}_{1}+\mathscr{B}^{k}_{0}\right)^{\ast}+(\mathscr{B}^{k}_{1})^{\ast}.
\end{eqnarray*}%
 In fact, if we substitute the coefficient of the second derivative in the first symmetry condition in (\ref{EDS1}) we obtain%
\begin{equation*}
W^{(k)}(t)\left( \mathscr{A}^{k}_{2}t^{2}+\mathscr{A}^{k}_{1}t+\mathscr{A}^{k}_{0}\right) =\left(
\mathscr{A}^{k}_{2}t^{2}+\mathscr{A}^{k}_{1}t+\mathscr{A}^{k}_{0}\right) ^{\ast }W^{(k)}\left( t\right),
\end{equation*}%
which does not hold.  Taking the main coefficient $W^{(k)}_2$ of $\widetilde W^{(\alpha\beta,v,k)}$ in (\ref{wk}), one has in particular 
\begin{equation*}
W^{k}_{2}\mathscr{A}^{k}_{1}-\left( \mathscr{A}^{k}_{1}\right) ^{\ast }W^{k}_{2}=\frac{4v(\alpha +\beta +2(k+1))}{%
(\kappa_{-v,\beta}+2(k+1))(\kappa_{v,\beta}+2(k+1))}%
\begin{pmatrix}
0 & 1 \\ 
-1 & 0%
\end{pmatrix}%
\neq \mathbf{0.}
\end{equation*}
The second statement follows from Proposition \ref{Shift}.
\end{remark}

\section{The algebra $D\left( W\right) $}\label{algebra}

In this section we will discuss some properties of the structure of the algebra
of matrix differential operators having as eigenfunctions a sequence of
polynomials $\left( P_{n}\right) _{n\geq 0}$, orthogonal with respect to the weight matrix $W=W^{\left( \alpha
	,\beta ,v\right) }$, i.e.,
\begin{eqnarray*}
	D\left( W\right) =\left\{ D:P_{n}D=\Lambda _{n}\left( D\right)P_{n} ,\text{ }%
	\Lambda _{n}\left( D\right) \in {%
		\mathbb{C}
	}^{N\times N}\text{ for all }n\geq 0\right\}.
\end{eqnarray*}
The definition of $D(W)$ does not depend on the particular sequence of
orthogonal polynomials (see \cite[Corollary 2.5]{GT07}).

\begin{theorem} Consider the weight matrix function $W=W^{(\alpha,\beta,v)}$(t). Then, the differential operators of order at most two in $D(W)$ are of the form 
	\begin{equation}
	D=\frac{d^{2}}{dt^{2}}\left( \mathcal{A}_{2}t^{2}+\mathcal{A}_{1}t+\mathcal{A}_{0}\right) +\frac{d}{dt}\left(
	\mathcal{B}_{1}t+\mathcal{B}_{0}\right) +\mathcal{C}_{0},  \label{Dg}
	\end{equation}%
	where%
	\begin{eqnarray*}
		\mathcal{A}_{2} &=&%
		\begin{pmatrix}
			a & c \\ 
			b & d%
		\end{pmatrix}%
		\text{ },\quad a,b,c,d\in 
		\mathbb{C}, \\
		\mathcal{A}_{1} &=&\frac{1}{2v}\left[
		\begin{pmatrix}
			-2va&(a-d)\kappa_{-v,-\beta}\\ (a-d)\kappa_{v,-\beta}&-2vd
		\end{pmatrix} +b\kappa_{-v,-\beta}\begin{pmatrix}-1&0\\2&1   \end{pmatrix}+c\kappa_{v,-\beta}\begin{pmatrix}-1&-2\\0&1   \end{pmatrix}\right],
		\\
		\mathcal{A}_{0} &=&\dfrac{ (a-d)\kappa_{v,-\beta}\kappa_{-v,-\beta}
			+b \kappa_{-v,-\beta} ^{2}-c\kappa_{v,-\beta} 
			^{2} }{%
			4v^{2}}%
		\begin{pmatrix}
			-1 & -1 \\ 
			1 & 1%
		\end{pmatrix}%
		,\\
		\\
		\mathcal{B}_{1} &=&%
		\begin{pmatrix}
			a\left( \alpha +\beta +4\right) & \left( \kappa_{-v,\beta} +4\right) c \\ \left( \kappa_{v,\beta} +4\right) b 
			& \left( \alpha +\beta +4\right) d%
		\end{pmatrix},
		\\
		\mathcal{B}_{0} &=&\frac{1}{4v}\left[
		a\begin{pmatrix}
			-4((\alpha+1) v-\kappa_{-v,-\beta}) &  \kappa_{-v,-\beta}
			\left( \kappa_{-v,\beta} +6\right)  \\ 
			\kappa_{v,-\beta} \left( \kappa_{v,\beta} +2\right) 
			& 0%
		\end{pmatrix}%
		+%
		b\kappa_{-v,-\beta}\begin{pmatrix}
			-\left( \kappa_{v,\beta} +2\right)
			&0
			\\ 
			2 \left(\kappa_{v,\beta} +4\right) & \kappa_{v,\beta} +6
		\end{pmatrix} \right.
		\\
		&&+\left.
		c\kappa_{v,-\beta}\begin{pmatrix}
			- \left( \kappa_{-v,\beta} +6\right) 
			& -2\left( \kappa_{-v,\beta} +4\right)  \\ 
			0
			&  \kappa_{-v,\beta} +2 %
		\end{pmatrix}%
		+%
		d\begin{pmatrix}
			0 &-\kappa_{-v,-\beta}\left( \kappa_{-v,\beta} +2\right) 
			\\ -\kappa_{v,-\beta} \left( \kappa_{v,\beta} +6\right) 
			
			& -4((\alpha+1)v+\kappa_{v,-\beta})%
		\end{pmatrix}\right],
		\\
		\mathcal{C}_{0} &=&\dfrac{1}{4}(\kappa_{v,\beta}+4 )(\kappa_{v,\beta}+2)
		\begin{pmatrix}
			a\dfrac{\left( \kappa_{-v,\beta} +4\right)}{\kappa_{v,\beta}+4} 
			-d\dfrac{ \kappa_{-v,\beta}+2}{\kappa_{v,\beta} +2} &c\dfrac{\left( \kappa_{-v,\beta} +4\right) (\kappa_{-v,\beta} +2)}{(\kappa_{v,\beta}+4 )
				(\kappa_{v,\beta} +2)} \\  b
			& 0%
		\end{pmatrix}%
		+eI,\quad e\in 
		\mathbb{C}.
	\end{eqnarray*}%
	
\end{theorem}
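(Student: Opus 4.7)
The plan is to determine every second-order operator $D\in D(W)$ by imposing the eigenvalue equation $P_nD=\Lambda_n(D)P_n$ for the first few $n$, showing that the resulting conditions pin down $D$ up to the five parameters displayed, and then verifying that each such operator in fact belongs to $D(W)$. First, because $P\mapsto PD$ must preserve the space of polynomials of degree $n$, one has $\deg F_j\le j$, so $D$ takes the form \eqref{Dg} with unknown coefficient matrices $\mathcal{A}_2,\mathcal{A}_1,\mathcal{A}_0,\mathcal{B}_1,\mathcal{B}_0,\mathcal{C}_0\in\mathbb{C}^{2\times 2}$. Matching the leading coefficients of $P_nD$ and $\Lambda_n(D)P_n$ yields immediately
$$\Lambda_n(D)=n(n-1)\mathcal{A}_2+n\mathcal{B}_1+\mathcal{C}_0.$$

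Next I would impose $P_nD=\Lambda_n(D)P_n$ successively for $n=1,2,3$, using the explicit $P_n^{(\alpha,\beta,v)}$ coming from Proposition \ref{recurre_inicial}. The case $n=1$ with $P_1(t)=tI-B_0^{(\alpha,\beta,v)}$ determines $\mathcal{B}_0$ as a linear expression in $\mathcal{B}_1$, $\mathcal{C}_0$ and $B_0^{(\alpha,\beta,v)}$. The case $n=2$ produces two matrix identities which, using \eqref{An}--\eqref{losBn}, fix $\mathcal{A}_1$ and $\mathcal{A}_0$ in terms of $\mathcal{A}_2$, $\mathcal{B}_1$ and $\mathcal{C}_0$, and force the diagonal and off-diagonal entries of $\mathcal{B}_1$ to be the specific multiples of the entries $a,b,c,d$ of $\mathcal{A}_2$ displayed in the statement; this uses in an essential way that the eigenvalues $\Lambda_n$ of $D^{(\alpha,\beta,v)}$ are diagonal with distinct entries, which rules out many would-be degrees of freedom through the commutator $[B_n,\Lambda_n]$. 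The case $n=3$ provides the last constraint on $\mathcal{C}_0$, pinning it to the closed form written in the statement up to an arbitrary additive multiple $eI$ of the identity, which corresponds to the trivial identity operator that always lies in $D(W)$.

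To conclude, one must check that every member of the resulting five-parameter family actually satisfies $P_nD=\Lambda_n(D)P_n$ for all $n$, not just for $n\le 3$. The cleanest way is induction on $n$, using the structural identity
$$(\Lambda_{n+1}-\Lambda_n)\,P_{n+1}+[B_n,\Lambda_n]\,P_n+(A_n\Lambda_{n-1}-\Lambda_nA_n)\,P_{n-1}=P_nF_1+2P_n'F_2$$
obtained by applying $D$ to both sides of the three-term recurrence \eqref{RR3}: given the eigenvalue equations at levels $n-1$ and $n$, this identity is both necessary and sufficient for the level $n+1$ equation, and it reduces to a finite collection of scalar identities in $n,\alpha,\beta,v$ and the five parameters $a,b,c,d,e$ that can be verified directly from \eqref{An}--\eqref{losBn}. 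The main obstacle will be the considerable bookkeeping required to repackage the raw conditions extracted from $n=1,2,3$ into the closed-form expressions for $\mathcal{A}_1,\mathcal{A}_0,\mathcal{B}_0,\mathcal{C}_0$ displayed in the statement; the operator $D^{(\alpha,\beta,v)}$ of \eqref{Dalpha_beta}--\eqref{Coef_CUV}, which corresponds to $(a,b,c,d,e)=(-1,0,0,-1,0)$, serves as a valuable sanity check at each stage of the rational manipulation with the parameters $\kappa_{\pm v,\pm\beta}$.
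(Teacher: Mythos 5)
Your starting point coincides with the paper's: expand $P_nD=\Lambda_n(D)P_n$ coefficient-wise with $\Lambda_n(D)=n(n-1)\mathcal{A}_2+n\mathcal{B}_1+\mathcal{C}_0$ and extract linear constraints on the six unknown matrices. But both halves of your argument have gaps. For necessity, the paper does not specialize to $n=1,2,3$; it imposes the equations coming from the coefficients of $t^{n-1}$ and $t^{n-2}$ as polynomial identities in $n$ (of degrees four and eight after clearing denominators), using the explicit top coefficients $\mathcal{P}_n^{n-1},\mathcal{P}_n^{n-2}$ from (\ref{Pn-k-1})--(\ref{Pn-k-2}), and annihilates every power of $n$. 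Your claim that the three cases $n\le 3$ already cut the $24$ complex unknowns down to the displayed five-parameter family is plausible but nowhere verified: a priori those finitely many equations could leave a larger solution set, and you give no dimension count. This part is repairable but is an unproved assertion as written.

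The more serious gap is sufficiency. Your inductive identity obtained by applying $D$ to the recurrence (\ref{RR3}) is correct and is indeed equivalent to the level-$(n+1)$ eigenvalue equation given levels $n-1$ and $n$; but it is an identity between matrix polynomials of degree $n+1$ involving the full polynomials $P_{n+1}$, $P_n$, $P_{n-1}$ and $P_n'$, not merely the recurrence coefficients. It therefore does not reduce to a finite collection of scalar identities verifiable from (\ref{An})--(\ref{losBn}): the number of coefficient conditions grows with $n$, and they involve all coefficients of $P_n$, which are not given in closed form by the recurrence data. The paper avoids this entirely. Having shown $\dim\mathcal{D}_2\le 5$, it uses the symmetry equations (\ref{EDS1}) to determine exactly which operators of the form (\ref{Dg}) are symmetric with respect to $W$ (namely $a,d,e\in\mathbb{R}$ together with the conjugate-linear relation (\ref{condconj}) between $b$ and $c$), notes that symmetric operators automatically belong to $D(W)$, and concludes that $D(W)$ contains five linearly independent operators of order at most two, forcing equality with the five-parameter family. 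You need either this symmetry argument or an actual verification of all remaining coefficient equations to close the proof; as proposed, the induction step cannot be carried out by the means you describe.
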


\begin{proof}
	Let $\left( P_{n}^{\left( \alpha ,\beta ,v\right) }\right) _{n\geq 0}$ be the
	monic sequence of orthogonal polynomials with respect to $W^{\left( \alpha
		,\beta ,v\right) }.$ The polynomial $P_{n}^{\left( \alpha ,\beta ,v\right) }$ is an eigenfunction of the operator $D$ (\ref{Dg})
	if 
	\begin{equation*}
	P_{n}^{\left( \alpha ,\beta ,v\right) }D=\Lambda _{n}P_{n}^{\left( \alpha ,\beta
		,v\right) },
	\end{equation*}%
	with $\Lambda _{n}=n\left( n-1\right) \mathcal{A}_{2}+n\mathcal{B}_{1}+\mathcal{C}_{0}.$ This
	equation holds if and only if 
	\begin{equation}\label{algb}
	\begin{array}{c}
	k(k-1)\mathcal{P}_{n}^{k}\mathcal{A}_{2}+(k+1)k\mathcal{P}_{n}^{k+1}\mathcal{A}_{1}+\left( k+2\right) \left(
	k+1\right)\mathcal{P}_{n}^{k+2} \mathcal{A}_{0}+k\mathcal{P}_{n}^{k}\mathcal{B}_{1} \\ 
	+\left( k+1\right) \mathcal{P}_{n}^{k+1}\mathcal{B}_{0}+\mathcal{P}_{n}^{k}\mathcal{C}_{0}-\left( n\left(
	n-1\right) \mathcal{A}_{2}+n\mathcal{B}_{1}+\mathcal{C}_{0}\right)\mathcal{P}_{n}^{k} =0,%
	\end{array}%
	\end{equation}%
	where $\mathcal{P}_{n}^{k}$ denotes de $k-th$ coefficient of $P_{n}, \ k=0,1,2\ldots n.$
	
	To  prove the theorem we need  to solve   equation (\ref{algb}) for $k=n-1$ and $k=n-2$ to
	find relations between the parameters of the matrix-valued coefficients $%
	\mathcal{A}_{2},\mathcal{A}_{1},\mathcal{A}_{0},\mathcal{B}_{1},\mathcal{B}_{0}$ and $\mathcal{C}_{0}$. 
	
	
	We obtain the explicit expressions of  $\mathcal{P}_{n}^{n-1}$ and $\mathcal{P}_{n}^{n-2}$ by substituting $k=0$ in the equalities (\ref{Pn-k-1}) and (\ref{Pn-k-2}) respectively.

	
	
	From equation (\ref{algb}) for $k=n-1$ we get%
	\begin{equation}
	\left( P_{n}^{n-1}\Lambda _{n}-\Lambda _{n}P_{n}^{n-1}\right)
	-P_{n}^{n-1}\left( 2\left( n-1\right) A_{2}+B_{1}\right) +\left[ n\left(
	n-1\right) A_{1}+nB_{0}\right] =0 . \label{k=n-1}
	\end{equation}%
	Multiplying  equation (\ref{k=n-1}) by
	\begin{equation*}
	\frac{v\left( \alpha +\beta +2\left( n+1\right) \right) \left( \kappa
		_{v,\beta }+2\left( n+1\right) \right) \left( \kappa _{-v,\beta }+2\left(
		n+1\right) \right) }{n}
	\end{equation*}%
	one obtains a matrix polynomial on $n$ of degree four, where  each coefficient must be equal to zero.
	From the expression of the coefficient of $n^{4}$ we get the expression for $\mathcal{A}_{1}$ given above and from the expression of the coefficient of $n^{3}$ we get $\mathcal{B}_{0}$ in terms of  $\mathcal{A}_{2}$ and $\mathcal{B}_{1}$.
	Looking at  the entries $\left( 1,1\right) ,\left( 1,2\right) $ and $\left(
	2,2\right) $ of the coefficient of $n^{2}$ and the fact that $\kappa
	_{v,-\beta }$ and $\kappa _{-v,-\beta }$ are no zero we get $\left( \mathcal{C}_{0}\right)
	_{12},$ $\left( \mathcal{C}_{0}\right) _{11}$ and $\left( \mathcal{B}_{1}\right) _{12}$  respectively in terms of $\mathcal{A}_{2}$ and the other entries of $\mathcal{C}_{0}$ and $\mathcal{B}_{1}$.
	Finally, looking at the coefficient of $n$ we get the values of  $\left( \mathcal{B}_{1}\right) _{11},$ $%
	\left( \mathcal{B}_{1}\right) _{21},$ $\left( \mathcal{B}_{1}\right) _{22}$ and $\left( \mathcal{C}_0\right)
	_{21}$,  consequently,  we obtain the values of  $\mathcal{B}_{1}$, $\mathcal{B}_{0}$ and $\mathcal{C}_{0}$ written above.
	
	\medskip
	
	Analogously, from  equation (\ref{algb}) for $k=n-2$ we obtain%
	\begin{equation}
	\left( P_{n}^{n-2}\Lambda _{n-2}-\Lambda _{n}P_{n}^{n-2}\right) +\left(
	n-1\right) P_{n}^{n-1}\left( \left( n-2\right) A_{1}+B_{0}\right) +n\left(
	n-1\right) A_{0}=0.  \label{k=n-2}
	\end{equation}%
	Multiplying equation (\ref{k=n-2}) by
	\begin{eqnarray*}
		&&v^{2}(\alpha +\beta +2n+1)(\alpha +\beta +2\left(
		n+1\right) )(\kappa _{v,\beta }+2\left( n+1\right) )\\
		&&(\kappa _{v,\beta }+\alpha +\beta +2\left( 2n+1\right) )(\kappa _{-v,\beta
		}+\alpha +\beta +2\left( 2n+1\right) )(\kappa _{-v,\beta }+2\left(
		n+1\right) )
	\end{eqnarray*}%
	one obtains  a matrix polynomial on $n$ of degree eight, where  each coefficient must be equal to zero. We get the expression of $%
	\mathcal{A}_{0}$ from the coefficient of $n^{8}$.
	
	Thus, if we replace the expressions of $\mathcal{A}_{0},\ \mathcal{A}_{1},\ \mathcal{B}_{1},\ \mathcal{B}_{0}$ and the
	entries $\left( 1,1\right) ,$ $\left( 1,2\right) $ and $\left( 2,1\right) $
	of $\mathcal{C}_0$ in (\ref{k=n-1}) and (\ref{k=n-2}) both equations hold true.

	
	Let $\mathcal{D}_{2}$ be the complex vector space of differential operators in $%
	D(W)$ of order at most two. We have already proved that $\dim \mathcal{D}_{2}\leq 5$.
	
	If $D$ is symmetric then $D\in D\left( W\right) $. Using symmetry equations in  (\ref{EDS1}) one verifies that the operator $D$ in (\ref{Dg}) is  symmetric with respect to $W$ if
	and only if $a,d,e\in 
	\mathbb{R}
	$ and  condition
	
	\begin{equation}\label{condconj}
	b\dfrac{\kappa_{v,\beta}+2}{\kappa_{v,-\beta}}=-\overline{c}\dfrac{\kappa_{-v,\beta}+2}{\kappa_{-v,-\beta}}
	\end{equation}
	holds true. Indeed, writing $W(t)=W^{\left( \alpha
		,\beta ,v\right) }=W_2t^2+W_1t+W_0$, from the first equation of symmetry in (\ref{EDS1}), we have that $W_{2}\mathcal{A}_{2}^{\ast
	}-\mathcal{A}_{2}W_{2}=0,$, i.e.,
	
	\begin{equation}\label{1eqsim}
	\begin{pmatrix}
	2\textit{Im}\left( a\right) \dfrac{(\kappa_{v,\beta}+2)}{\kappa_{v,-\beta}} &-\overline{b}\dfrac{(\kappa_{v,\beta}+2)}{\kappa_{v,-\beta}}-c\dfrac{(\kappa_{-v,\beta}+2)}{\kappa_{-v,-\beta}} \\ 
	b\dfrac{(\kappa_{v,\beta}+2)}{%
		\kappa_{v,-\beta}}+\overline{c}\dfrac{(\kappa_{-v,\beta}+2)}{\kappa_{-v,-\beta}} & -2
	\textit{Im}\left( d\right) \dfrac{(\kappa_{v,\beta}+2)}{\kappa_{-v,-\beta}}%
	\end{pmatrix}%
	=0, 
	\end{equation}%
	where $Im(z)$ denotes the imaginary part of a complex number $z$.
	Then, since $\kappa_{v,\beta}+2>0$ because of the restrictions of the parameters $\alpha$, $\beta$ and $v$ in the definition of $W^{(\alpha,\beta,v)}$ in (\ref{W}), to verify (\ref{1eqsim}) one needs to have $a,d\in 
	\mathbb{R}
	$ and condition (\ref{condconj}).
	
	In addition, from the third symmetry equation (\ref{EDS1}) we have that $e\in 
	\mathbb{R}$.


	%
	
	Thus, there exists at least five linearly independent symmetric operators of order at most two in $D(W)$. Therefore $\dim \mathcal{D}_{2}=5$.
\end{proof}

By taking as the only non zero parameters $a=1$ and $d=1$ respectively in the expression of the operator in (\ref{Dg}), we write the operators:\begin{eqnarray*}
	D_{1} &=&%
	\frac{d^{2}}{dt^{2}}\left[\begin{pmatrix}
		t^{2}-t&\dfrac{\kappa_{-v,-\beta}}{2v}t\\\dfrac{\kappa_{v,-\beta}}{2v}t&0\end{pmatrix}
	+\dfrac{\kappa_{v,-\beta}  \kappa_{-v,-\beta}}{4v^{2}} \begin{pmatrix}
		-1&-1\\1&1
	\end{pmatrix}\right]%
	\\
	&+&
	\frac{d}{dt}\begin{pmatrix}
		\left( \alpha +\beta +4\right) t+\dfrac{\kappa_{-v,-\beta}}{v}-(\alpha+1) & 
		\dfrac{\kappa_{-v,-\beta} \left( \kappa_{-v,\beta} +6\right) }{4v}\\
		\dfrac{\kappa_{v,-\beta} \left( \kappa_{v,\beta}+2\right) }{4v}& 0%
	\end{pmatrix}+
	\begin{pmatrix}
		\dfrac{\left(\kappa_{-v,\beta} +4\right) \left( \kappa_{v,\beta} +2\right) }{4}
		& 0 \\ 
		0 & 0%
	\end{pmatrix},\\
	D_{2} &=&%
	\frac{d^{2}}{dt^{2}}\left[ \begin{pmatrix}0&-\dfrac{\kappa_{-v,-\beta}}{2v}t\\
		-\dfrac{\kappa_{v,-\beta}}{2v}t&t^2-t\end{pmatrix}
	-\dfrac{\kappa_{-v,-\beta}\kappa_{v,-\beta}}{%
		4v^{2}} 
	\begin{pmatrix}
		-1&-1\\1&1
	\end{pmatrix}\right]%
	\\
	&+&
	\frac{d}{dt} \begin{pmatrix}
		0 &  
		-\dfrac{\kappa_{-v,-\beta} ( \kappa_{-v,\beta}+2) }{4v}
		\\-\dfrac{\kappa_{v,-\beta} \left( \kappa_{v,\beta}+6\right) }{%
			4v}& \left( \alpha +\beta +4\right) t-\dfrac{\kappa_{v,-\beta}}{v}-(\alpha +1)%
	\end{pmatrix}%
	+
	\begin{pmatrix}
		-\dfrac{1}{4}\left( \kappa_{v,\beta} +4\right) \left( \kappa_{-v,\beta} +2\right) 
		& 0 \\ 
		0 & 0%
	\end{pmatrix}.%
\end{eqnarray*}Analogously, by choosing as nonzero parameters $c =1$, $b=-\dfrac{\kappa_{v,-\beta}(\kappa_{-v,\beta}+2)}{\kappa_{-v,-\beta}(\kappa_{v,\beta}+2)}$ and  $c =i$, \\
$b=i\dfrac{\kappa_{v,-\beta}(\kappa_{-v,\beta}+2)}{\kappa_{-v,-\beta}(\kappa_{v,\beta}+2)}$ respectively, we define the operators:


\medskip

\begin{align*}
D_{3}=\frac{d^{2}}{dt^{2}}
	\left\{\left( 
	\begin{array}{cc}
		0 &1
		\\ 
		-\dfrac{\kappa_{v,-\beta}(\kappa_{-v,\beta}+2)}{\kappa_{-v,-\beta }(\kappa_{v,\beta}+2)}&0
	\end{array}
	\right)t^2 +\dfrac{\kappa_{v,-\beta}}{\kappa_{v,\beta}+2 }\left[ \left( 
	\begin{array}{cc}
		-1 & -\dfrac{\kappa_{v,\beta}+2  }{v}
		\\ 
		-\dfrac{\kappa_{-v,\beta}+2 }{v}&1
	\end{array}
	\right)t\right.\right.\\
	+\dfrac{1}{2}\left.\left.  \left(\dfrac{(\alpha +\beta+2)(\alpha-\beta)}{v^2}+1 \right)
	\begin{pmatrix}1&1\\
		-1&-1 
	\end{pmatrix} 
	\right]	\right\}+ \dfrac{d}{dt}\left[\begin{pmatrix}
		0 & \kappa_{-v,\beta}+4 \\ -\dfrac{\kappa_{v,-\beta}(\kappa_{-v,\beta}+2)(\kappa_{v,\beta}+4)}
		{\kappa_{-v,-\beta}(\kappa_{v,\beta}+2)} & 0%
	\end{pmatrix} t \right. \\
	   +\dfrac{\kappa_{v,-\beta}}{v} \left. \begin{pmatrix}
		-1 & 	-\dfrac{(\kappa_{-v,\beta}+4)}{2} \\ -\dfrac{(\kappa_{v,\beta}+4)(\kappa_{-v,\beta}+2)}{2(\kappa_{v,\beta}+2)} & -\dfrac{\kappa_{-v,\beta}+2}{\kappa_{v,\beta}+2}%
	\end{pmatrix}  \right]+ \dfrac{1}{4}(\kappa_{-v,\beta}+2) \begin{pmatrix} 0 & \kappa_{-v,\beta}+4 \\
		-\dfrac{(\kappa_{v,\beta}+4)\kappa_{v,-\beta}}{\kappa_{-v,-\beta}} & 0
	\end{pmatrix},
\end{align*}
\begin{align*}
iD_{4}=\frac{d^{2}}{dt^{2}} 
	\left\{ \left( 
	\begin{array}{cc}
		0 & -1
		\\ 
		-\dfrac{\kappa_{v,-\beta}(\kappa_{-v,\beta}+2)}{\kappa_{-v,-\beta}(\kappa_{v,\beta}+2)}&0
	\end{array}
	\right)t^2+\dfrac{\kappa_{v,-\beta}}{v(\kappa_{v,\beta}+2)}\left[\left( 
	\begin{array}{cc}
		\alpha+\beta+2 & \kappa_{v,\beta}+2
		\\ 
		-(\kappa_{-v,\beta}+2)&-(\alpha+\beta+2) \end{array}\right)t \right.\right.\\
	\left. \left.- (\alpha +1)
	\begin{pmatrix}1&1\\
		-1&-1 
	\end{pmatrix} 
	\right]	\right\}
	+\dfrac{d}{dt} \left[\begin{pmatrix}
		0 & 
		-(\kappa_{-v,\beta}+4) \\ -\dfrac{\kappa_{v,-\beta}(\kappa_{-v,\beta}+2)(\kappa_{v,\beta}+4)}{\kappa_{-v,-\beta}(\kappa_{v,\beta}+2)} &0
	\end{pmatrix}t \right.\\
	+ \dfrac{\kappa_{v,-\beta}}{2v}  \left. \begin{pmatrix}
		\kappa_{-v,\beta}+4 & 
		\kappa_{-v,\beta}+4 \\ -\dfrac{(\kappa_{v,\beta}+4)(\kappa_{-v,\beta}+2)}{(\kappa_{v,\beta}+2)}&-\dfrac{(\kappa_{v,\beta}+4)(\kappa_{-v,\beta}+2)}{(\kappa_{v,\beta}+2)}%
	\end{pmatrix}\right]\\
-\dfrac{\kappa_{-v,\beta}+2}{4}\begin{pmatrix}
		0 & \kappa_{-v,\beta}+4\\
		\dfrac{(\kappa_{v,\beta}+4)\kappa_{v,-\beta}}{\kappa_{-v,-\beta}} & 0
	\end{pmatrix}.
\end{align*}

One has the following \begin{corollary}
	The set of symmetric operators $\left\{ D_{1},D_{2},D_{3},D_{4},I\right\} $ is a basis of the space of differential operators of order at most two in $D(W)$.
	Moreover, \ the corresponding eigenvalues for the differential operators $%
	D_{1},D_{2},D_{3}$ and $D_{4}$ are%
	
		\begin{eqnarray*}
		\Lambda _{n}\left( D_{1}\right)  &=&\frac{1}{4}%
		\begin{pmatrix}
			\left( \kappa_{v,\beta}+2(n+1) \right) \left(\kappa_{-v,\beta}+2(n+2) \right)  & 0
			\\ 
			0 & 0%
		\end{pmatrix},
		\\
		\Lambda _{n}\left( D_{2}\right)  &=&%
		\begin{pmatrix}
			-\dfrac{1}{4}\left(\kappa_{-v,\beta}+2 \right) \left( \kappa_{v,\beta}+4
			\right)  & 0 \\ 
			0 & \left( n+\alpha +\beta +3\right) n%
		\end{pmatrix},
		\\
		\Lambda _{n}\left( D_{3}\right)  &=&%
		\dfrac{1}{4}\left( \kappa_{-v,\beta}+2(1+n) \right) \left(\kappa_{-v,\beta}+ 2(2+n)
		\right)
		\begin{pmatrix}
			0 & 
			1 \\
			0& 0%
		\end{pmatrix}\\
		&-&\dfrac{\left( \kappa_{v,\beta}+2(1+n)\right) \left(\kappa_{v,\beta}+2(2+n)\right) \left( \kappa_{-v,\beta}+2\right) \kappa_{v,-\beta} }{%
			4\kappa_{-v,-\beta} \left(\kappa_{v,\beta}+2\right) } \begin{pmatrix}
			0 & 0 \\
			1& 0%
		\end{pmatrix},
		\\
		\Lambda _{n}\left( iD_{4}\right)  &=&%
		-\dfrac{1}{4}\left( \kappa_{-v,\beta}+2(1+n) \right) \left(\kappa_{-v,\beta}+ 2(2+n)
		\right)
		\begin{pmatrix}
			0 & 
			1 \\
			0& 0%
		\end{pmatrix}\\
		&-&\dfrac{\left( \kappa_{v,\beta}+2(1+n)\right) \left(\kappa_{v,\beta}+2(2+n)\right) \left( \kappa_{-v,\beta}+2\right) \kappa_{v,-\beta} }{%
			4\kappa_{-v,-\beta} \left(\kappa_{v,\beta}+2\right) } \begin{pmatrix}
			0 & 0 \\
			1& 0%
		\end{pmatrix}.
	\end{eqnarray*}
\end{corollary}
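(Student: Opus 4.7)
The plan is to leverage the preceding theorem, which parametrizes every element of $\mathcal{D}_2$ (the space of order-at-most-two differential operators in $D(W)$) by five complex parameters $(a,b,c,d,e)$ and establishes $\dim_{\mathbb{C}}\mathcal{D}_2=5$. Once $D_1,D_2,D_3,D_4$ are identified with specific parameter values, the basis statement reduces to a linear algebra verification and the eigenvalues follow from substitution into the general formula $\Lambda_n(D)=n(n-1)\mathcal{A}_2+n\mathcal{B}_1+\mathcal{C}_0$ derived in that theorem.

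First I would read off the parameters of each operator by comparing the explicit expressions defining $D_1,\ldots,D_4$ with the generic form (\ref{Dg}). The operator $D_1$ corresponds to $a=1$ and $b=c=d=e=0$; $D_2$ to $d=1$ with the rest zero; $D_3$ to $c=1$, $b=-\kappa_{v,-\beta}(\kappa_{-v,\beta}+2)/(\kappa_{-v,-\beta}(\kappa_{v,\beta}+2))$ and $a=d=e=0$; and $D_4$ to $c=i$, $b=i\kappa_{v,-\beta}(\kappa_{-v,\beta}+2)/(\kappa_{-v,-\beta}(\kappa_{v,\beta}+2))$ and $a=d=e=0$. In each case $a,d,e$ are real and the pair $(b,c)$ satisfies the symmetry identity $b(\kappa_{v,\beta}+2)/\kappa_{v,-\beta}=-\overline{c}(\kappa_{-v,\beta}+2)/\kappa_{-v,-\beta}$ established in the previous proof, so every $D_i$ is not only in $\mathcal{D}_2$ but is symmetric with respect to $W$. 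Substituting the chosen parameter values into the formulas for $\mathcal{A}_j,\mathcal{B}_j,\mathcal{C}_0$ reproduces the coefficients displayed in each $D_i$, confirming the identification.

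Next I would establish $\mathbb{C}$-linear independence of $\{D_1,D_2,D_3,D_4,I\}$ by examining a hypothetical relation $\sum_{i=1}^{4}\lambda_iD_i+\mu I=0$ coordinatewise in the parameter space $\mathbb{C}^5$. The $a$, $d$ and $e$ coordinates give $\lambda_1=\lambda_2=\mu=0$ immediately, while the $b$ and $c$ coordinates leave a homogeneous $2\times 2$ system in $(\lambda_3,\lambda_4)$ whose determinant is a nonzero multiple of $\kappa_{v,-\beta}(\kappa_{-v,\beta}+2)/(\kappa_{-v,-\beta}(\kappa_{v,\beta}+2))$, forcing $\lambda_3=\lambda_4=0$. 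Combined with $\dim_{\mathbb{C}}\mathcal{D}_2=5$, this settles the basis claim.

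For the eigenvalues I would apply $\Lambda_n(D)=n(n-1)\mathcal{A}_2+n\mathcal{B}_1+\mathcal{C}_0$ to each operator. For $D_1$ the three matrices are supported in the $(1,1)$ entry and the resulting scalar $n^2-n+n(\alpha+\beta+4)+\tfrac14(\kappa_{-v,\beta}+4)(\kappa_{v,\beta}+2)$ factors as $\tfrac14(\kappa_{v,\beta}+2n+2)(\kappa_{-v,\beta}+2n+4)$ by completing the square in the auxiliary variable $s=\alpha+\beta+2n+3$ and using $(s+v-1)(s-v+1)=s^2-(v-1)^2$; the argument for $D_2$ is identical with the roles of $v$ and $-v$ swapped and the nonzero entry in the $(2,2)$ position, where the constant term vanishes. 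For $D_3$ and $iD_4$ the matrices $\mathcal{A}_2,\mathcal{B}_1,\mathcal{C}_0$ are purely off-diagonal, so $\Lambda_n$ is off-diagonal, and the same quadratic manipulation applied to the $(1,2)$ and $(2,1)$ entries (using $\kappa_{-v,\beta}$ in the former and $\kappa_{v,\beta}$ in the latter) yields the product forms in the statement. The only genuine obstacle is bookkeeping among the many occurrences of $\kappa_{\pm v,\pm\beta}$; no conceptual difficulty beyond the previous theorem is required.
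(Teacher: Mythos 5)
Your proposal is correct and follows essentially the same route as the paper: the corollary is read off from the preceding theorem's five-parameter description of $\mathcal{D}_{2}$, the symmetry criterion ($a,d,e\in\mathbb{R}$ together with (\ref{condconj})) to place each $D_i$ in $D(W)$, linear independence checked in the parameter space $(a,b,c,d,e)$, and the eigenvalue formula $\Lambda _{n}(D)=n(n-1)\mathcal{A}_{2}+n\mathcal{B}_{1}+\mathcal{C}_{0}$. The only slip is in your verbal description of $\Lambda _{n}(D_{2})$: its $(1,1)$ entry carries the nonzero constant $-\tfrac{1}{4}(\kappa _{-v,\beta }+2)(\kappa _{v,\beta }+4)$ coming from $\mathcal{C}_{0}$, in addition to the $n$-dependent $(2,2)$ entry, but your general method produces this automatically.
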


\begin{corollary}
	The differential operators appearing in (\ref{Dalpha_beta}) and (\ref{OE}) are $%
	D^{\left( \alpha ,\beta ,v\right) }=-D_{1}-D_{2}$ and $E^{(0)}=-\dfrac{\kappa_{v,\beta}+4}{\kappa_{v,\beta}+2}D_{1}-\dfrac{\kappa_{-v,\beta}+4}{\kappa_{-v,\beta}+2}D_{2}$ respectively.
\end{corollary}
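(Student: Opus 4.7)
The plan is to verify both identities by using the isomorphism between the algebra $D(W)$ and the sequences of eigenvalues (see the earlier Remark and \cite[Propositions 2.6 and 2.8]{GT07}): two differential operators in $D(W)$ coincide if and only if they have the same eigenvalue sequence $(\Lambda_n)_{n\geq 0}$. Thus it suffices to check, entry by entry, that the linear combinations claimed for $D^{(\alpha,\beta,v)}$ and $E^{(0)}$ produce the known eigenvalues $\Lambda_n$ from (\ref{LN}) and $\Lambda_n(E^{(0)})=n(n+\alpha+\beta+3)\mathscr{A}_2^0$ respectively.

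For the identity $D^{(\alpha,\beta,v)}=-D_1-D_2$, I would start from the eigenvalue formulas listed in the previous corollary. The $(2,2)$ entry of $-\Lambda_n(D_1)-\Lambda_n(D_2)$ is immediate: $\Lambda_n(D_1)_{22}=0$ and $-\Lambda_n(D_2)_{22}=-n(n+\alpha+\beta+3)=\mu_n$. For the $(1,1)$ entry one must check
\[
-\tfrac14(\kappa_{v,\beta}+2(n+1))(\kappa_{-v,\beta}+2(n+2))+\tfrac14(\kappa_{v,\beta}+4)(\kappa_{-v,\beta}+2)=\lambda_n,
\]
which follows by expanding both products (for instance using the substitution $p=\alpha+\beta+2n$) and observing that the $n$-independent constants differ by $\tfrac14\bigl[(\kappa_{v,\beta}+4)(\kappa_{-v,\beta}+2)-(\kappa_{-v,\beta}+4)(\kappa_{v,\beta}+2)\bigr]=\tfrac14(-4v)=-v$, which is exactly the shift between $\mu_n$ and $\lambda_n$.

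For the identity $E^{(0)}=-\tfrac{\kappa_{v,\beta}+4}{\kappa_{v,\beta}+2}D_1-\tfrac{\kappa_{-v,\beta}+4}{\kappa_{-v,\beta}+2}D_2$, the $(2,2)$ entry is again immediate since $\Lambda_n(D_1)_{22}=0$ and the coefficient of $D_2$ matches the $(2,2)$ entry of $\mathscr{A}_2^0$. For the $(1,1)$ entry I need to verify that
\[
-\tfrac{\kappa_{v,\beta}+4}{4(\kappa_{v,\beta}+2)}\Bigl[(\kappa_{v,\beta}+2n+2)(\kappa_{-v,\beta}+2n+4)-(\kappa_{-v,\beta}+2)(\kappa_{v,\beta}+4)\Bigr]=-n(n+\alpha+\beta+3)\tfrac{\kappa_{v,\beta}+4}{\kappa_{v,\beta}+2}.
\]
Setting $u=\kappa_{v,\beta}$, $w=\kappa_{-v,\beta}$ and expanding, the bracketed expression collapses to $2n(u+w)+4n(n+3)=4n(n+\alpha+\beta+3)$ (using $u+w=2(\alpha+\beta)$), which yields exactly the target.

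The main obstacle is just bookkeeping: both verifications reduce to a pair of scalar identities in the parameters $\alpha,\beta,v$ and $n$, and the cancellation of the large constant terms in the $(1,1)$ entries is what makes the desired linear dependence visible. No conceptual difficulty beyond the isomorphism step is involved, and no other entries have to be checked since all the relevant eigenvalues are upper-triangular (and in fact have only the listed non-zero entries).
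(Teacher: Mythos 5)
Your route---identifying the operators through their eigenvalue sequences and invoking the isomorphism of \cite[Propositions 2.6 and 2.8]{GT07}---is sound, and it is the natural one given that the preceding corollary already lists $\Lambda_n(D_1)$ and $\Lambda_n(D_2)$; the paper states this corollary without proof, so there is no competing argument to compare against. Your verification of $D^{(\alpha,\beta,v)}=-D_1-D_2$ is correct: the $(2,2)$ entries match trivially, and the $(1,1)$ computation, including the identification of the constant shift $\tfrac14\bigl[(\kappa_{v,\beta}+4)(\kappa_{-v,\beta}+2)-(\kappa_{-v,\beta}+4)(\kappa_{v,\beta}+2)\bigr]=-v$, checks out.

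However, the displayed identity you write for the $(1,1)$ entry of $E^{(0)}$ is false as stated. The term you subtract inside the bracket should be $(\kappa_{-v,\beta}+4)(\kappa_{v,\beta}+2)$, not $(\kappa_{-v,\beta}+2)(\kappa_{v,\beta}+4)$: the contribution of $D_2$ is
\begin{equation*}
-\frac{\kappa_{-v,\beta}+4}{\kappa_{-v,\beta}+2}\,\Lambda_n(D_2)_{11}
=\tfrac14(\kappa_{-v,\beta}+4)(\kappa_{v,\beta}+4)
=\frac{\kappa_{v,\beta}+4}{4(\kappa_{v,\beta}+2)}\,(\kappa_{-v,\beta}+4)(\kappa_{v,\beta}+2),
\end{equation*}
so the bracket must be $(\kappa_{v,\beta}+2n+2)(\kappa_{-v,\beta}+2n+4)-(\kappa_{-v,\beta}+4)(\kappa_{v,\beta}+2)$, which does equal $2n(\kappa_{v,\beta}+\kappa_{-v,\beta})+4n(n+3)=4n(n+\alpha+\beta+3)$. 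With the constant you actually wrote, the bracket equals $4n(n+\alpha+\beta+3)+4v$ (this is precisely the computation from your first identity), and the claimed equality fails by $v\,\tfrac{\kappa_{v,\beta}+4}{\kappa_{v,\beta}+2}$. Your subsequent expansion is the one belonging to the corrected bracket, so this is a transcription slip rather than a conceptual gap, but as written that step does not hold and should be repaired.
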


\begin{corollary}
	There are no operators of order one in the algebra D(W).
\end{corollary}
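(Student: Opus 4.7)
The plan is to deduce this immediately from the parametrization of $\mathcal{D}_2$ obtained in the preceding theorem. Any operator of order one would in particular lie in $\mathcal{D}_2$ (the space of operators of order at most two in $D(W)$), and hence must be of the form displayed in (\ref{Dg}) with $\mathcal{A}_2=0$ (no second-order terms present).

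My first step would be to impose $\mathcal{A}_2=0$ in the parametrization, which forces $a=b=c=d=0$. I would then substitute these values into the explicit formulas for $\mathcal{A}_1,\mathcal{A}_0,\mathcal{B}_1,\mathcal{B}_0,\mathcal{C}_0$ given in the theorem. A direct inspection of those formulas shows that $\mathcal{A}_1$, $\mathcal{A}_0$, $\mathcal{B}_1$ and $\mathcal{B}_0$ are all linear in $a,b,c,d$, so they all vanish, while $\mathcal{C}_0$ reduces to $eI$ with $e\in\mathbb{C}$.

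Consequently, every operator of order at most one in $D(W)$ is a scalar multiple of the identity, i.e.\ an operator of order zero. Therefore there is no operator of order exactly one in $D(W)$, which is the desired conclusion. There is no real obstacle here: the whole content is already encoded in the coefficient formulas of the preceding theorem, so the argument is essentially a one-line substitution followed by the observation that the resulting operator is of order zero.
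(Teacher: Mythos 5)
Your proof is correct and follows essentially the same route as the paper: both arguments rest on the preceding classification of $\mathcal{D}_2$, observing that the second-order coefficient $\mathcal{A}_2$ determines the whole operator up to the scalar term $eI$, so that $\mathcal{A}_2=0$ forces $a=b=c=d=0$ and the operator degenerates to order zero. The paper merely phrases this via the basis $\{D_1,D_2,D_3,iD_4,I\}$ instead of the raw parametrization in (\ref{Dg}), which is an immaterial difference.
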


\begin{proof}
	Suppose that there exists a right differential operator of order one, such that $D= aD_{1}+bD_{2}+cD_{3}+d(iD_{4})+eI$, with  $a,b,c,d,e \in \mathbb{R}$. Equating to zero the matrix-valued coefficient of $\dfrac{d^{2}}{dt^{2}}$ one obtains:

	\begin{equation*}
	\begin{pmatrix}
	a & c-di \\ 
	-\dfrac{\kappa_{v,-\beta}(\kappa_{-v,\beta}+2 )}{\kappa_{-v,-\beta}(\kappa_{v,\beta}+2)}(c+di) & b%
	\end{pmatrix}
	=\mathbf{0}.
	\end{equation*}%
	Therefore $a=b=c=d=0.$
\end{proof}

\begin{corollary}
	The algebra $D\left( W\right) $ is not commutative.
\end{corollary}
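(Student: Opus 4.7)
The plan is to exploit the isomorphism, invoked in the earlier remark following the definition of $E^{(k)}$, between $D(W)$ and the commutative-or-not algebra of its eigenvalue sequences: two differential operators in $D(W)$ commute if and only if their eigenvalue sequences commute termwise. Thus it suffices to exhibit two operators in the explicit basis $\{D_1,D_2,D_3,D_4,I\}$ of the second order part provided by the previous corollary whose eigenvalues fail to commute for at least one index $n$.

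The natural candidates are $D_1$ and $D_3$, since $\Lambda_n(D_1)$ is a diagonal matrix with only the $(1,1)$-entry nonzero, while $\Lambda_n(D_3)$ is purely anti-diagonal. Concretely, writing
\[
\Lambda_n(D_1)=\begin{pmatrix} x_n & 0 \\ 0 & 0 \end{pmatrix},\qquad \Lambda_n(D_3)=\begin{pmatrix} 0 & y_n \\ z_n & 0 \end{pmatrix},
\]
a direct $2\times 2$ multiplication gives
\[
[\Lambda_n(D_1),\Lambda_n(D_3)] = \begin{pmatrix} 0 & x_n y_n \\ -x_n z_n & 0 \end{pmatrix}.
\]
From the explicit formulas in the preceding corollary, $x_n=\tfrac14(\kappa_{v,\beta}+2(n+1))(\kappa_{-v,\beta}+2(n+2))$ and $y_n=\tfrac14(\kappa_{-v,\beta}+2(n+1))(\kappa_{-v,\beta}+2(n+2))$, while $z_n$ is a nonzero multiple of $\kappa_{v,-\beta}/\kappa_{-v,-\beta}$. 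The assumption $|\alpha-\beta|<|v|<\alpha+\beta+2$ on the parameters guarantees $\kappa_{\pm v,\pm\beta}\neq 0$ and that the factors of the form $\kappa_{\pm v,\beta}+2(n+j)$ are positive for all $n\geq 0$, $j\geq 1$. Hence both $x_n y_n$ and $x_n z_n$ are nonzero for every $n\geq 0$, and in particular the commutator $[\Lambda_n(D_1),\Lambda_n(D_3)]$ is not the zero matrix.

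By the isomorphism, this implies $D_1 D_3 \neq D_3 D_1$ in $D(W)$, and the algebra is therefore non-commutative. The only mildly subtle point is to confirm that the parameter restrictions make the relevant quantities nonzero; apart from that, the argument is a one-line eigenvalue computation, so I would not expect any serious obstacle beyond carefully reading off $x_n$, $y_n$, $z_n$ from the formulas for $\Lambda_n(D_1)$ and $\Lambda_n(D_3)$ already written down.
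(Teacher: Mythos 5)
Your proof is correct and follows exactly the paper's argument: both invoke the isomorphism between $D(W)$ and the algebra of eigenvalue sequences and then observe that $\Lambda_n(D_1)$ and $\Lambda_n(D_3)$ fail to commute. You merely fill in the explicit $2\times 2$ commutator computation and the nonvanishing check (which the parameter restrictions indeed guarantee), details the paper leaves implicit.
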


\begin{proof}
	Using the isomorphism  between the algebra of differential operators and the algebra of matrix-valued functions of $n$ generated by the eigenvalues going with this operators we have that
	$D_{1}D_{3}\neq D_{3}D_{1}$ \ since $\Lambda _{n}\left( D_{1}\right)
	\Lambda _{n}\left( D_{3}\right) \neq \Lambda _{n}\left( D_{3}\right) \Lambda
	_{n}\left( D_{1}\right) $.
\end{proof}

\begin{remark}
	In \cite{PZ16} the authors study the algebra $D\left( W^{\left( p,q\right) }\right) $, where $W^{\left( p,q\right) }$ is, for $p\neq \dfrac{q}{2}$,
	the irreducible  weight matrix%
	\begin{equation*}
	W^{\left( p,q\right) }(t)=\left( t\left( 1-t\right) \right) ^{\dfrac{q-2}{2}}%
	\begin{pmatrix}
	2pt^{2}-2pt+\dfrac{q}{2} & qt-\dfrac{q}{2} \\ 
	qt-\dfrac{q}{2} & -2\left( p-q\right) t^{2}+2\left( p-q\right) t+\dfrac{q}{2}
	\end{pmatrix},\quad t \in [0,1].
	\end{equation*}%
	Let us denote by $D_{1}^{\left( p,q\right) },D_{2}^{\left( p,q\right)
	},D_{3}^{\left( p,q\right) }$ and $D_{4}^{\left( p,q\right) }$ the
	differential operators appearing in \cite{PZ16}. Then, taking $\alpha=\beta=\dfrac{q}{2}-1$ in (\ref{W}) and writing $v=2p-q$,
	we have the following
	relations with the operators $D_i,\ i=1,2,3,4$, defined above:
	\begin{eqnarray*}
		D_{1}^{\left( p,q\right) } &=&D_{1},\text{ \ \ \ }D_{2}^{\left(
			p,q\right) }=D_{2}+\left( q-p\right) \left( p+1\right) I ,\\
		D_{3}^{\left( p,q\right) } &=&\frac{p}{2(q-p) }\left(
		D_{3}+iD_{4}\right) ,\text{ \ \ \ \ }D_{4}^{\left( p,q\right) }=\frac{%
			1}{2}\left( D_{3}-iD_{4}\right) .
	\end{eqnarray*}
\end{remark}


\begin{thebibliography}{KdlRR17}
	\bibitem{Atk}  F. V. Atkinson,
	\newblock Discrete and continous boundary problems,
\newblock 	Academic Press, New York, 1964.
	
	\bibitem{Be}  Ju. M.Berezanskii, 
	\newblock Expansions in
		eigenfunctions of selfadjoint
		operators, 
		\newblock {\em Transl. Math. Monographs}, AMS  {\bf 17} 1968.
\bibitem{BCD} J. Borrego, M. Castro,  and A. J. Dur\'an,
\newblock Orthogonal matrix polynomials satisfying differential equations with recurrence coefficients having non-scalar limits,
\newblock {\em Integral Transforms Spec. Funct.}, 23, 685- 700 (2012).

\bibitem{CGPSZ19}
C. Calder{\'o}n, Y. Gonz{\'a}lez, I. Pacharoni, S. Simondi, and I. Zurri{\'a}n,
\newblock $2\times2$ hypergeometric operators with diagonal eigenvalues
\newblock {\em J. Approx. Theory}, 248:105299, 17pp (2019).

\bibitem{CMV07}
M. J. Cantero, L. Moral and L. Vel{\'a}zquez,
\newblock Matrix orthogonal polynomials whose derivatives are also orthogonal,
\newblock {\em J. Approx. Theory}, 146(2):174--211 (2007).

\bibitem{C18}
W.~R. Casper,
\newblock Elementary examples of solutions to {B}ochner's problem for matrix
  differential operators,
\newblock {\em J. Approx. Theory},  229, 36--71 (2018).

\bibitem{C19}
W.~R. Casper, The symmetric $2\times 2$ hypergeometric matrix differential operators,
\newblock \newblock {\em arXiv:1907.12703},  (2019).


\bibitem{CY18}
W.~R. Casper and M.~Yakimov,
\newblock The matrix {B}ochner problem,
\newblock {	
	{\em Amer. J. Math.}  (2021), to appear, \em arXiv:1803.04405}.

\bibitem{CG06} M.M. Castro and F. A. Gr\"unbaum, 
\newblock The algebra of differential operators associated to a family of
	matrix-valued orthogonal polynomials: five instructive examples,
\newblock {\em Int. Math. Res. Not.}, 7,  1--33 (2006).
\bibitem{CG15}
M.~{Castro} and F.~A. {Gr\"unbaum},
\newblock {The Darboux process and time-and-band limiting for matrix orthogonal
  polynomials,}
\newblock {\em {Linear Algebra Appl.}},  487, 328--341  (2015).

\bibitem{CG17}
M.~Castro and F.~A. Gr\"unbaum,
\newblock Time-and-band limiting for matrix orthogonal polynomials of Jacobi
  type,
\newblock {\em Random Matrices: Theory and Applications}, 06(04):1740001, 12pp (2017).



\bibitem{CGPZ17}
M.~Castro, F.~A. Gr\"unbaum, I.~Pacharoni, and I.~Zurri{\'a}n,
\newblock A further look at time-and-band limiting for matrix orthogonal
  polynomials,
\newblock in M.~Nashed and X.~Li, editors, {\em Frontiers in Orthogonal
  Polynomials and $q$-Series}. World Scientific, 2018.
  
  


\bibitem{DG86}
J.~J. Duistermaat and F.~A. Gr{\"u}nbaum,
\newblock Differential equations in the spectral parameter,
\newblock {\em Comm. Math. Phys.}, 103(2):177--240 (1986).

\bibitem{Dur96}
A. J. Dur\'an,
\newblock Markov's theorem for orthogonal matrix polynomials,
\newblock {\em Canad. J. Math.}, 48:1180–1195 (1996). 


\bibitem{D97}
A.~J. Dur\'an,
\newblock {Matrix inner product having a matrix symmetric second-order
	differential operator},
\newblock {\em Rocky Mt. J. Math.}, 27(2):585--600 (1997).
  \bibitem{D2} A. J. Dur\'an, 
   \newblock Generating orthogonal matrix polynomials
  	satisfying se\-cond order differential equations from a trio of triangular matrices,
  \newblock{\em J. Approx. Theory}, 161,  88-113 (2009).
  
  \bibitem{D3} A. J. Dur\'an,  
    \newblock A method to find weight matrices having symme\-tric second-order differential
  	operators with matrix leading coefficents, {\em Const. Approx}, 29, 181-205 (2009).
  
\bibitem{D10}
A. J. Dur\'{a}n,
 \newblock Rodrigues' formulas for orthogonal matrix polynomials satisfying second-order differential equations,
 \newblock {\em Int. Math. Res. Not}, 5, 824--855 (2010).
 
 \bibitem{DG04}
 A.~J. Dur\'an and F.~A. Gr{\"u}nbaum,
 \newblock Orthogonal matrix polynomials satisfying second-order differential
 equations,
 \newblock {\em Int. Math. Res. Not.}, 10:461--484 (2004).

\bibitem{DG3}  A. J.  Dur\'an and F. A. Gr\"unbaum,
\newblock Structural formulas for
	orthogonal matrix polynomials satisfying
	second order differential equations, I,
\newblock {\em Constr. Approx.}, 22, 255--271 (2005).



\bibitem{DG5} A. J.  Dur\'an  and F. A. Gr\"unbaum, 
\newblock Matrix orthogonal polynomials satisfying
	second order differential equations: coping without help from group representation theory,
\newblock {\em J. Approx. Theory},  148, 35--48 (2007).

\bibitem{DdI1} A. J. Dur\'an,  and M. D. de la Iglesia, 
\newblock Some examples of orthogonal matrix polynomials satisfying
	odd order differential equations,
	\newblock{\em J. Approx. Theory, } 150, 153-174 (2008).

\bibitem{DL1} A. J. Dur\'an and P. L\'opez, 
\newblock Orthogonal matrix polynomials,
\newblock  {\em Laredo Lectures on Orthogonal Polynomials and Special Functions}, 13--44, Adv. Theory Spec. Funct. Orthogonal Polynomials, Nova Sci. Publ., Hauppauge, NY, 2004.
\bibitem{DL}  A. J. Dur\'an  and P. L{\'o}pez,
\newblock Structural formulas for orthogonal matrix polynomials satisfying
	second order differential equations, II, 
	\newblock {\em Constr. Approx.}, 26, No. 1,  29--47 (2007).







\bibitem{G03}
F.~A. Gr{\"u}nbaum,
\newblock Matrix-valued {J}acobi polynomials,
\newblock {\em Bull. Sci. Math.}, 127(3):207--214 (2003).

\bibitem{GdI}  F.A. Gr\"unbaum and  M. D. de la Iglesia,
\newblock Matrix-valued orthogonal polynomials
	arising from group representation theory and a family of quasi-birth-and death
	processes,
\newblock {\em SIAM J. Matrix Anal. Appl.}, 30, 741--761 (2008).

\bibitem{GPT01}
F.~A. Gr{\"u}nbaum, I.~Pacharoni, and J.~Tirao,
\newblock A matrix-valued solution to {B}ochner's problem,
\newblock {\em J. Phys. A}, 34(48):10647--10656 (2001).

\bibitem{GPT02a}
F.~A. Gr{\"u}nbaum, I.~Pacharoni, and J.~Tirao,
\newblock Matrix-valued spherical functions associated to the complex
  projective plane,
\newblock {\em J. Funct. Anal.}, 188(2):350--441 (2002).

\bibitem{GPT03}
F.~A. Gr{\"u}nbaum, I.~Pacharoni, and J.~Tirao,
\newblock Matrix-valued orthogonal polynomials of the {J}acobi type,
\newblock {\em Indag. Math. (N.S.)}, 14(3-4):353--366 (2003).


\bibitem{GPZ15}
F.~A. {Gr\"unbaum}, I.~{Pacharoni}, and I.~{Zurri\'an},
\newblock Time and band limiting for matrix-valued functions, an example,
\newblock {\em {SIGMA, Symmetry Integrability Geom. Methods Appl.}},
  11(044):1--14 (2015).

\bibitem{GPZ17}
F.~A. Gr{\"u}nbaum, I.~Pacharoni, and I.~Zurri{\'a}n,
\newblock Time and band limiting for matrix-valued functions,
\newblock {\em Inverse Problems}, 33(2):1--26 (2017).

\bibitem{GPZ18}
F.~A. Gr{\"u}nbaum, I.~Pacharoni, and I.~Zurri{\'a}n,
\newblock Bispectrality and time-band-limiting: Matrix-valued polynomials,
\newblock {\em International Math. Research Notices} 13, 4016-4036 (2020).



\bibitem{GT07}
F.~A. Gr{\"u}nbaum and J.~Tirao,
\newblock The algebra of differential operators associated to a weight matrix,
\newblock {\em Integral Equations Operator Theory}, 58(4):449--475 (2007).

\bibitem{KRR17}
E.~Koelink, A.~de~los R{\'\i}os, and P.~Rom\'an,
\newblock Matrix-valued {G}egenbauer-type polynomials,
\newblock {\em Constr. Approx.}, 46(3):459--487 (2017).

\bibitem{KPR12}
E.~Koelink, M.~van Pruijssen, and P.~Rom\'an,
\newblock {Matrix-valued orthogonal po\-ly\-no\-mials related to
  $(\mathrm{SU}(2)\times \mathrm{SU}(2), diag)$},
\newblock {\em Int. Math. Res. Not.}, 2012(24):5673--5730 (2012).

\bibitem{KPR13}
E.~Koelink, M.~van Pruijssen, and P.~Rom\'an,
\newblock Matrix-valued orthogonal po\-ly\-no\-mials related to
  $(\mathrm{SU}(2)\times \mathrm{SU}(2),\mathrm{SU}(2))$, {II},
\newblock {\em PRIMS}, 49(2):271--312 (2013).

\bibitem{K49}
M.~G. Krein,
\newblock Infinite j-matrices and a matrix moment problem,
\newblock {\em Dokl. Akad. Nauk SSSR}, 69(2):125--128 (1949).

\bibitem{K71}
M.~G. Krein,
\newblock Fundamental aspects of the representation theory of hermitian
  operators with deficiency index $(m,m)$,
\newblock {\em AMS Translations, series 2}, 97:75--143 (1971).

\bibitem{LM}
L. Miranian,
\newblock On classical orthogonal polynomials and differential operators,
\newblock {\em J. Phys. A: Math Gen,} 38, 6379-6383 (2005).

\bibitem{PR08}
I.~Pacharoni and P.~Rom\'an,
\newblock {A sequence of matrix-valued orthogonal polynomials associated to
  spherical functions},
\newblock {\em Constr. Approx.}, 28(2):127--147 (2008).

\bibitem{PT06}
I.~Pacharoni and J.~Tirao,
\newblock Matrix-valued orthogonal polynomials arising from the complex
  projective space,
\newblock {\em Constr. Approx.}, 25(2):177--192 (2006).


\bibitem{PZ16}
I.~Pacharoni and I.~Zurri{\'a}n,
\newblock Matrix {G}egenbauer {P}olynomials: {T}he {$2\times 2$} {F}undamental
  {C}ases,
\newblock {\em Constr. Approx.}, 43(2):253--271 (2016).

\bibitem{Sz} G. Szeg\"o, 
\newblock Orthogonal Polynomials,
\newblock{\em Coll. Publ., XXIII, American Mahematical Society}, Providence, RI, 1975.

\bibitem{T03}
J.~Tirao,
\newblock The matrix-valued hypergeometric equation,
\newblock {\em Proc. Natl. Acad. Sci. U.S.A.}, 100(14):8138--8141 (2003).

\bibitem{T11}
J.~Tirao,
\newblock {The algebra of differential operators associated to a weight matrix:
  a first example},
\newblock {Polcino Milies, C\'esar (ed.), Groups, algebras and applications.
  XVIII Latin American algebra colloquium, S\~ao Pedro, Brazil, August 3--8,
  2009. Proceedings. Providence, RI: American Mathematical Society (AMS).
  Contemporary Mathematics 537, 291-324} 2011.

\bibitem{TZ16}
J.~Tirao and I.~Zurri\'an,
\newblock {Reducibility of Matrix Weights},
\newblock {\em Ramanujan J.}, 45, no. 2, 349-374 (2018).

\bibitem{Z16}
I.~{Zurri{\'a}n},
\newblock {The Algebra of Differential Operators for a Gegenbauer Weight
  Matrix},
\newblock {\em Int. Math. Res. Not.}, 8, 2402-2430 (2016).



\end{thebibliography}

\section{Aknowledgements}

The authors would like to thank Pablo Rom\'an for useful  suggestions on earlier versions of the paper. The authors also thank the anonymous referees for their careful reading and remarks.

\end{document}